\newcommand{\Z}{\mathbb Z}
\newcommand{\Q}{\mathbb Q}
\newcommand{\C}{\mathbb C}
\newcommand{\R}{\mathbb R}
\newcommand{\normal}{\unlhd}
\newcommand{\rg}{\mathcal{R}}
\newcommand{\sg}{\mathcal{S}}
\newcommand{\Irr}{\text{Irr}}
\newcommand{\Syl}{\text{Syl}}
\newcommand{\Gal}{\textnormal{Gal}}
\newtheorem{theorem}{Theorem}[section]
\newtheorem{lemma}[theorem]{Lemma}
\newtheorem{proposition}[theorem]{Proposition}
\newtheorem*{ThmA}{Theorem A}
\newtheorem*{Main}{Main Theorem}
\theoremstyle{definition}
\newtheorem{definition}[theorem]{Definition}
\newtheorem{rem}[theorem]{Remark}
\def\irr#1{{\rm Irr}(#1)}
\def\cent#1#2{{\bf C}_{#1}(#2)}
\def\bg#1#2{{\bf B}_{#1}(#2)}
\def\syl#1#2{{\rm Syl}_#1(#2)}
\def\oh#1#2{{\bf O}_{#1}(#2)}
\def\zent#1{{\bf Z}(#1)}
\def\aut#1{{\rm Aut}(#1)}
\newcommand{\F}{{\mathbb F}}
\def\irr#1{{\rm Irr}(#1)}
\def\cent#1#2{{\bf C}_{#1}(#2)}
\def\syl#1#2{{\rm Syl}_#1(#2)}
\def\norm#1#2{{\bf N}_{#1}(#2)}
\def\oh#1#2{{\bf O}_{#1}(#2)}
\def\zent#1{{\bf Z}(#1)}
\def\aut#1{{\rm Aut}(#1)}
\def\Z{{\mathbb Z}}
\def\C{{\mathbb C}}
\def\Q{{\mathbb Q}}
\def\irr#1{{\rm Irr}(#1)}
\def\cent#1#2{{\bf C}_{#1}(#2)}
\def\syl#1#2{{\rm Syl}_#1(#2)}
\def\oh#1#2{{\bf O}_{#1}(#2)}
\def\zent#1{{\bf Z}(#1)}
\def\norm#1#2{{\bf N}_{#1}(#2)}
    \def \mod#1{\, {\rm mod} \, #1 \, }
\mathchardef\coso="2023
\begin{document}

\title{On quadratic rational Frobenius 
 groups}

\author[E. Pacifici]{Emanuele Pacifici}
\address{Emanuele Pacifici, Dipartimento di Matematica e Informatica U. Dini,\newline
Universit\`a degli Studi di Firenze, viale Morgagni 67/a,
50134 Firenze, Italy.}
\email{emanuele.pacifici@unifi.it}

\author[M. Vergani]{Marco Vergani}
\address{Marco Vergani, Dipartimento di Matematica e Informatica U. Dini,\newline
Universit\`a degli Studi di Firenze, viale Morgagni 67/a,
50134 Firenze, Italy.}
\email{marco.vergani@unifi.it}

\thanks{The authors are partially supported by INdAM-GNSAGA. This research is also partially funded by the European Union-Next Generation EU, Missione 4 Componente 1, CUP B53D23009410006, PRIN 2022 2022PSTWLB - Group Theory and Applications.}

\keywords{Finite groups; Complex characters; Fields of values.}
\subjclass[2020]{20C15}

\begin{abstract} 
Let $G$ be a finite group and, for a given complex character $\chi$ of $G$, let $\Q(\chi)$ denote the field extension of $\Q$ obtained by adjoining all the values $\chi(g)$, for $g\in G$.
The group $G$ is called \emph{quadratic rational} if, for every irreducible complex character $\chi\in\irr G$, the field $\Q(\chi)$ is an extension of $\Q$ of degree at most $2$. Quadratic rational groups have a nice characterization in terms of the structure of the group of central units in their integral group ring, and in fact they generalize the well-known concept of a \emph{cut} group (i.e., a finite group whose integral group ring has a finite group of central units). In this paper we classify the Frobenius groups that are quadratic rational, a crucial step in the project of describing the Gruenberg-Kegel graphs associated to quadratic rational groups. It turns out that every quadratic rational Frobenius group is \emph{uniformly semi-rational}, i.e., it satisfies the following property: all the generators of any cyclic subgroup of $G$ lie in at most two conjugacy classes of $G$, and these classes are permuted by the same element of the Galois group  ${\rm{Gal}}(\Q_{|G|}/\Q)$ (in general, every cut group is uniformly semi-rational, and every uniformly semi-rational group is quadratic rational). We will also see that the class of groups here considered coincides with the one studied in \cite{semiratfrobenius}, thus the main result of this paper also completes the analysis carried out in \cite{semiratfrobenius}.
\end{abstract}

\maketitle

\section{Introduction}

A finite group $G$ is called \emph{quadratic rational} if, for every irreducible complex character $\chi\in\irr G$, the field of values $\Q(\chi)=\Q(\chi(g)\mid g\in G)$ is a field extension of $\Q$ of degree at most $2$. A closely related concept is that of a \emph{semi-rational group}: a finite group $G$ is called semi-rational if, for every element $g\in G$, the generators of $\langle g\rangle$ lie either in the conjugacy class of $g$ or in that of $g^{r_g}$ for a suitable integer $r_g$ (depending on $g$ and coprime to its order). This is equivalent to the property that, for every $g\in G$, the field $\Q(g)=\Q(\chi(g)\mid\chi\in\irr G)$ is an extension of $\Q$ of degree at most $2$; thus, from a character table perspective, the quadratic rationality and semi-rationality properties are dual to each other. 

Quadratic rational groups have been studied in \cite{tentquadraticrat}; in particular, it has been proved that the set $\pi(G)$ of primes that divide the order of a solvable quadratic rational group $G$ is contained in $\{2,3,5,7,13\}$ (and, for every $p\in\{2,3,5,7,13\}$, there exists a solvable quadratic rational group whose order is divisible by $p$). A similar result has been obtained also for solvable semi-rational groups in \cite{CD}: the set $\pi(G)$ of any solvable semi-rational group $G$ is contained in $\{2,3,5,7,13,17\}$, but in this case, at the time of this writing, it is not known whether there exists a solvable semi-rational group $G$ such that $17\in\pi(G)$. Both the classes of quadratic rational groups and semi-rational groups contain the so called \emph{cut} groups, that have been extensively studied because of their characterization in terms of the central units of their integral group ring: a  group $G$ is called a cut group if all the central units of its integral group ring $\Z G$ are trivial (i.e., of the form $\pm g$ for some $g\in \zent G$). It has been proved in Chapter~8 of \cite{Bovdi} and in \cite{rs} that this ring-theoretical condition is equivalent to the fact that, for every $\chi\in\irr G$, the field $\Q(\chi)$ is either $\Q$ or a purely imaginary extension of $\Q$ of degree $2$. Hence, a cut group $G$ is quadratic rational, and if $G$ is solvable then $\pi(G)\subseteq\{2,3,5,7\}$ (see~\cite{bac}). On the other hand, the cut property is also equivalent to \emph{inverse semi-rationality}: a group is cut if and only if it is semi-rational with $r_g=-1$ for every $g\in G$. Note that the class of cut groups is in turn a generalization of the important class of \emph{rational groups}, i.e., groups whose character table contains only rational entries (equivalently, groups in which the generators of every given cyclic subgroup lie in the same conjugacy class). We also note that quadratic rationality, as well as the cut property, is equivalent to one particular  condition concerning central units of the group ring (see Remark~\ref{RingCharacterization}). 

An interesting way to understand the properties of these classes of groups is to study their \emph{Gruenberg-Kegel graphs}. The Gruenberg-Kegel graph associated to a finite group $G$, also called \emph{prime graph} of $G$ and denoted by ${\rm{GK}}(G)$,  is the simple undirected graph whose vertices are the primes in $\pi(G)$, and two vertices $p$, $q$ are adjacent if and only if $G$ has an element of order $pq$. In \cite{gkdelrio}, graphs that are realizable as ${\rm{GK}}(G)$ for a solvable cut group $G$ are studied, and a crucial step in this analysis is to get control of the Frobenius and $2$-Frobenius cut groups, since they precisely constitute the cases in which the prime graph is disconnected.

Frobenius groups that are rational, inverse semi-rational or semi-rational have been studied in  \cite{qfrob}, \cite{bac} and \cite{semiratfrobenius} respectively. Our objective is to describe Frobenius quadratic rational groups, aiming to understand (in future research) the graphs that are realizable as Gruenberg-Kegel graphs of quadratic rational groups. To this end, it is relevant to introduce a class of groups that lies both in the class of semi-rational and quadratic rational groups: we say that a finite group is \emph{uniformly semi-rational} if it is semi-rational and there exists an integer $r$ such that $r_g=r$ for all $g\in G$ (see also~\cite{dRV}). When it is convenient to emphasize the role of the integer $r$, we also call such a group \emph{$r$-semi-rational}; thus, for instance, the cut groups are precisely the $-1$-semi-rational groups. Not surprisingly, $r$-semi-rational groups behave similarly to cut groups when $r^2$ is congruent to $1$ modulo the exponent of the group. 

In this paper we classify all the quadratic rational Frobenius groups, and we prove that they are all uniformly semi-rational. In contrast to the case of Frobenius cut groups, not all of them are solvable (in fact, there exists a unique non-solvable quadratic rational Frobenius group). 

Setting $H_1=\langle x,y\mid x^5=y^4=1, x^y=x^{-1} \rangle$, 
$$H_2=\langle x,y,z\mid x^3=y^8=[x,z]=1, z^2=y^4,  y^z=y^{-1}, x^y=x^{-1}\rangle,$$ which are groups of order $20$ and $48$ respectively, and denoting by $C_n$ a cyclic group of order~$n$, we obtain the following (in Theorem~A, as well as in the Main Theorem, we use the symbol ``$\rtimes$" to denote a semidirect product that is not direct). 

\begin{ThmA}
    Let $G$ be a quadratic rational Frobenius group with Frobenius complement $H$.
    \begin{enumerate}
        \item If $G$ is solvable, then $H$ is isomorphic to one of the groups in the following list: 
           $$C_2,\; C_3,\;C_4,\;C_6,\; Q_8,\; C_3\rtimes C_4,\; Q_{16},\; H_1, \;{\rm{SL}}_2(3),\; C_3\rtimes Q_8,\; C_3\times Q_8\;, {\rm{SL}}_2(3).C_2,\; H_2\,.$$
    \item If $G$ is non-solvable, then $H$ is isomorphic to ${\rm{SL}}_2(5)$.
    \end{enumerate}
Conversely,  every group $H$ as in (1) or (2) is the Frobenius complement of some quadratic rational Frobenius group.  
\end{ThmA}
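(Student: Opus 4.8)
\smallskip
\noindent
The plan is to split Theorem~A into its two halves --- the classification of the admissible complements and the converse realisation --- and to reduce each to a question about $H$ alone via the character theory of Frobenius groups. Write $G=K\rtimes H$ with $K=\fit{G}$ the Frobenius kernel, so that $K$ is nilpotent and $\gcd(|K|,|H|)=1$. By Clifford theory for Frobenius groups, $\irr{G}$ is the disjoint union of the inflations of $\irr{H}$ and of the set $\{\theta^{G}:\theta\in\irr{K}\setminus\{1_{K}\}\}$, where each $\theta^{G}$ is irreducible and two of these coincide exactly when the underlying characters of $K$ are $H$-conjugate. Hence $G$ is quadratic rational if and only if (i) $H$ is quadratic rational (being a quotient of $G$), and (ii) $[\Q(\theta^{G}):\Q]\le 2$ for every nontrivial $\theta\in\irr{K}$. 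For (ii) one uses that $\theta^{G}$ vanishes off $K$ and that, writing $\theta^{(t)}$ for the Galois conjugate $k\mapsto\theta(k^{t})$, one has $(\theta^{G})^{(t)}=(\theta^{(t)})^{G}$; therefore $\Q(\theta^{G})$ is the fixed field, inside $\Q_{|K|}$, of $\{t\in(\Z/|K|)^{\times}:\theta^{(t)}\text{ is }H\text{-conjugate to }\theta\}$, and (ii) says precisely that this subgroup has index at most $2$ in $\Gal(\Q_{|K|}/\Q)$, i.e.\ that the $H$-orbit of $\theta$ exhausts its Galois orbit up to a subgroup of index $\le 2$. For the classification of $H$ only (i) is needed; condition (ii) is what one engineers for the converse, and it is what controls $\pi(K)$ once $H$ is fixed.

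\emph{Necessity.} Let $H$ be a quadratic rational Frobenius complement and invoke the Zassenhaus structure theory: every Sylow subgroup of $H$ is cyclic or generalised quaternion, and $H$ is either a $Z$-group (equivalently, metacyclic) or has a normal subgroup isomorphic to $Q_{8}$ with metacyclic quotient, the non-solvable possibilities being governed by $\SL{3}$ and $\SL{5}$. In the non-solvable case $H$ has a normal subgroup $H_{0}$ of index $\le 2$ with $H_{0}\cong\SL{5}\times M$, $M$ a metacyclic $Z$-group of order coprime to $30$; one argues $M=1$ (a non-trivial such $M$, normal in the quadratic rational group $H$, would force an irreducible character of $H$ with field of values containing some $\Q(\zeta_{p})$ of degree $>2$), and then a direct inspection of character tables shows $\SL{5}$ is quadratic rational --- all its fields of values lie in $\Q(\sqrt{5})$ --- whereas the relevant extension $\SL{5}.C_{2}$ is not; this is part~(2). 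In the solvable case one shows, using the classification of solvable Frobenius complements together with quadratic rationality (and the bound $\pi(H)\subseteq\{2,3,5,7,13\}$ of \cite{tentquadraticrat}), that in fact $\pi(H)\subseteq\{2,3,5\}$, that the Sylow $2$-subgroup of $H$ is one of $1,C_{2},C_{4},Q_{8},Q_{16}$, the Sylow $3$-subgroup is $1$ or $C_{3}$, the Sylow $5$-subgroup is $1$ or $C_{5}$, and $15\nmid|H|$; running through the resulting short list and deciding quadratic rationality in each case --- by computing the fields of values of $\irr{H}$ through the Clifford-theoretic description above --- leaves exactly the groups in part~(1). The delicate points are the metacyclic groups $C_{3}\rtimes C_{4}$ and $H_{1}$ and the groups of order $24$ and $48$ built on $Q_{8}$, $Q_{16}$, $\SL{3}$: one must check, for instance, that the degree-$2$ characters of $C_{3}\rtimes Q_{8}$ lie in $\Q(\sqrt{-3})$ rather than in $\Q(\zeta_{12})$, and that $H_{2}$ --- in which the quaternion part inverts the central $C_{3}$ --- is quadratic rational while the \say{untwisted} group $C_{3}\times Q_{16}$ is not, discarding near-misses such as $Q_{32}$ and $F_{20}=C_{5}\rtimes C_{4}$ (the former not quadratic rational, the latter not a Frobenius complement) and keeping the binary octahedral group $\SL{3}.C_{2}$ of~(1) apart from the split extension $\SL{3}\rtimes C_{2}\cong\GL{2}{3}$ (not a Frobenius complement, having more than one involution).

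\emph{Converse.} Given $H$ on the list, I would exhibit an elementary abelian $K$ of characteristic $q$ coprime to $|H|$, on which $H$ acts faithfully and fixed-point-freely and such that the group of scalars realised in the image of $H$ on each orbit has index at most $2$ in $\F_{q}^{\times}$; by the reduction this forces $[\Q(\theta^{G}):\Q]\le 2$, so $G=K\rtimes H$ is quadratic rational. The cleanest device is to make $H$ act regularly on $K\setminus\{0\}$, which happens when $K$ is two-dimensional over $\F_{q}$ with $|H|=q^{2}-1$ (then the $\theta^{G}$ are even rational): this handles $\SL{5}$ on $\F_{11}^{2}$ (inside $\SL{11}$), $\SL{3}$ on $\F_{5}^{2}$ (inside $\SL{5}$), $Q_{8}$ on $\F_{3}^{2}$, $\SL{3}.C_{2}$ on $\F_{7}^{2}$ (inside $\SL{7}$), and the cyclic complements inside $\F_{3}^{\times},\F_{4}^{\times},\F_{5}^{\times},\F_{7}^{\times}$. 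For the remaining complements ($Q_{16}$, $H_{1}$, $C_{3}\rtimes Q_{8}$, $C_{3}\times Q_{8}$, $C_{3}\rtimes C_{4}$, $H_{2}$) one takes $K$ of $\F_{q}$-dimension $2$ or $4$ over a small $q\in\{3,5,7\}$ coprime to $|H|$ --- extending scalars to $\F_{q^{2}}$ when the minimal faithful fixed-point-free representation (of character field $\Q(\sqrt{2})$, $\Q(\sqrt{5})$ or $\Q(\sqrt{3})$) is not realisable over $\F_{q}$ --- and uses that $\zent{H}$ maps onto a group of scalars of index $\le 2$ in $\F_{q}^{\times}$ to conclude as before. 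In every case $G$ is a Frobenius group, $H$ is quadratic rational, and the $\theta^{G}$ have field of values of degree $\le 2$; moreover each $G$ so obtained is uniformly semi-rational, which both establishes the assertion of the abstract and gives the coincidence with the class studied in \cite{semiratfrobenius} (where these constructions essentially already appear).

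\emph{Where the difficulty lies.} I expect the main obstacle to be the endgame of \emph{Necessity}: settling quadratic rationality for the solvable complements of order $24$ and $48$ and excluding their numerous neighbours. Since quadratic rationality is not inherited by subgroups, one cannot argue locally and must genuinely compute the relevant character tables, carefully tracking how the Galois action interlocks with Clifford theory --- the crucial phenomenon being that fields of values which a priori look like $\Q(\zeta_{12})$ or $\Q(\zeta_{24})$ in fact collapse to quadratic fields precisely because the quaternion part inverts the relevant cyclic subgroups. A secondary difficulty, in \emph{Converse}, is to guarantee that the kernel can always be chosen of order coprime to $|H|$ and over a small enough field: the transitivity trick disposes of the large complements cleanly, but the smaller ones genuinely force the passage to $\F_{q^{2}}$ and a check that the centre of $H$ still supplies enough scalars.
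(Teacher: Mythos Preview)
Your overall architecture---reduce to the quadratic rationality of $H$ via $H\cong G/K$, then realise each surviving $H$ by an explicit kernel---matches the paper's. The solvable analysis and the converse constructions are sketched along essentially the same lines (Sylow bounds, the Zassenhaus/Passman structure theory, then case-by-case elimination; regular actions for the converse).

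There is, however, a genuine error in your treatment of part~(2). You assert that ``a direct inspection of character tables shows \dots\ the relevant extension ${\rm SL}_2(5).C_2$ is not'' quadratic rational. This is false: the Frobenius-complement extension ${\rm SL}_2(5).C_2$ (the one with generalised quaternion Sylow $2$-subgroup $Q_{16}$) \emph{is} quadratic rational, and the paper records this explicitly (Proposition~\ref{nonsolvablefrobcompl}). Consequently your strategic claim that ``for the classification of $H$ only (i) is needed'' breaks down: knowing that $H$ is a quadratic rational Frobenius complement does not yet pin down the list in Theorem~A, because ${\rm SL}_2(5).C_2$ is a quadratic rational Frobenius complement that nevertheless is \emph{not} the complement of any quadratic rational Frobenius group.

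Excluding ${\rm SL}_2(5).C_2$ genuinely requires engaging condition~(ii) on the kernel side. In the paper this is done by first bounding the possible $|K|$ (Proposition~\ref{prop4.6}, Table~\ref{possibleK}: $K$ would have to be a $\{7,11,13,17\}$-group of suitable shape) and then verifying, essentially by the $\tfrac{p-1}{2}$-eigenvalue criterion and a \texttt{GAP} check, that for no irreducible $\F_pH$-module $K$ with $H\cong{\rm SL}_2(5).C_2$ is $K$ quadratic rational in $K\rtimes H$ (Remark~\ref{tableirreducible}, Table~\ref{irreducibleK}). Your write-up needs an argument of this type; simply inspecting the character table of $H$ cannot suffice. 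A secondary point: for the same reason, in the solvable case you should be explicit that your ``short list'' really is the list of quadratic rational Frobenius complements and that each one is subsequently realised---otherwise a reader cannot tell whether a further kernel-side exclusion (as for ${\rm SL}_2(5).C_2$) might still be lurking.
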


In view of the above result, we are then able to prove the main theorem of this paper. It will be useful to introduce the following notation: if $G=K\rtimes H$ is a Frobenius group with Frobenius kernel $K$ and Frobenius complement $H$, then we set $G_n=K^n\rtimes H$ to be the Frobenius group given by the diagonal action of $H$ on the direct product of $n$ copies of $K$. Some of the groups in the following classification (namely, those in (1)(a), (1)(b) and (2)) are rational or cut groups, so they already appear in \cite{qfrob} and \cite{bac}; however, for the convenience of the reader, we present a complete statement. We take the occasion to point out that (2)(b) of the Main Theorem is slightly different from the original statement in \cite[Theorem~1.3, (2)(b)]{bac}, because the latter contains a (minor) inaccuracy: namely, it is not true in general that every cyclic subgroup of $K$ is normalized by $H$ (as shown by \cite[Example~1]{CD}).

\begin{Main}
    Let $G$ be a Frobenius group with Frobenius kernel $K$ and Frobenius complement~$H$. If $G$ is quadratic rational, then $G$ is uniformly semi-rational and the following conclusions hold.
    \begin{enumerate}
    \item Let $|H|$ be even. 
    \begin{enumerate}
        \item If $G$ is rational, then $G$ is isomorphic to one of the following:
         \begin{multicols}{2}
        \begin{enumerate}
            \item $C_3^n\rtimes C_2$.
            \item $C_3^{2n}\rtimes Q_8$.
            \item $C_5^2\rtimes Q_8$.
        \end{enumerate}
        \end{multicols}
        \item If $G$ is cut and not rational, then $G$ is isomorphic to one of the following:
        \begin{multicols}{2}
            \begin{enumerate}
                \item $C^{2n}_3\rtimes C_4$ 
                \item $C_5^n \rtimes C_4$ 
                \item $C_7^n \rtimes C_6$ 
                \item $C_5^2\rtimes (C_3\rtimes C_4)$ 
                \item $C_5^2 \rtimes {\rm {SL}}_2(3)$ 
                \item $C_7^2\rtimes {\rm {SL}}_2(3)$ 
                \item $C_7^{2n} \rtimes (C_3\times Q_8)$ 
            \end{enumerate}
        \end{multicols}
        \item If $G$ is $r$-semi-rational with $r^2\equiv 1\;(\mod\exp(G))$ and not cut, then $G$ is isomorphic to one of the following:
         \begin{multicols}{2}
        \begin{enumerate}
            \item $C_3^{4n} \rtimes Q_{16}$ 
            \item $C_7^2\rtimes ({\rm {SL}}_2(3).C_2)$ 
    \end{enumerate}
       \end{multicols}
        \item If $G$ is not as in (a), (b) or (c), then $G$ is $r$-semi-rational with $r^4\equiv 1\;(\mod\exp(G))$ and $G$ is isomorphic to one of the following:
        \begin{multicols}{2}
        \begin{enumerate}    
            \item  $C_5^n\rtimes C_2$
             \item $(C_5^a\times C_5^b)\rtimes C_4$ 
              \item $C_5^{2n}\rtimes C_6$ 
            \item $C_{13}^n\rtimes C_6$ 
            \item $C_5^{2n}\rtimes Q_8\quad(n>1)$ 
            \item $C_5^{2n}\rtimes (C_3\rtimes C_4)\quad(n>1)$ 
            \item $C_5^{4n}\rtimes Q_{16}$ 
            \item $C_3^{4n}\rtimes H_1$ 
            \item $C_5^{2n}\rtimes {\rm {SL}}_2(3)\quad(n>1)$ 
            \item $C_5^{4n}\rtimes (C_3\rtimes Q_8)$   
            \item $C_5^{4n}\rtimes (C_3\times Q_{8})$ 
            \item $C_{13}^{2n}\rtimes (C_3\times Q_{8})$ 
            \item $C_5^{4n}\rtimes H_2$ 
            \item $C_5^{4n}\rtimes ({\rm {SL}}_2(3).C_2)$ 
            \item $C_{11}^2\rtimes {\rm {SL}}_2(5)$
            \end{enumerate}
        \end{multicols}
        \end{enumerate}
    \item Let $|H|$ be odd. Then $H\cong C_3$, $G$ is a cut group and one of the following conclusions hold.
    \begin{enumerate}
        \item $K$ is a cut $2$-group having a fixed-point free automorphism of order $3$. In particular $|K|=4^a$ for some even number $a\neq 0$ and it is an extension of an abelian group of exponent dividing~$4$ by an abelian group of exponent dividing $4$.
        \item $K$ is a $7$-group having a fixed-point free automorphism of order $3$, and every cyclic subgroup of $K$ is normalized by a conjugate of $H$. Moreover, $K$ has exponent $7$ and it is an extension of an elementary abelian $7$-group by an elementary abelian $7$-group.
    \end{enumerate}
    \end{enumerate}
    Conversely, each of the Frobenius groups listed above is quadratic rational.
\end{Main}

Note that, in particular, the Frobenius group $G=C_{11}^2\rtimes {\rm {SL}}_2(5)$ is the only non-solvable quadratic rational Frobenius group. We refer the reader to the second paragraph of Section~5 for further details about the groups in (d)(ii). 

As explained in Remark~\ref{semiratcompleted}, we will also see that the class of semi-rational Frobenius groups actually coincides with that of quadratic rational Frobenius groups; therefore, the Main Theorem above also completes the analysis carried out in \cite{semiratfrobenius}. 

In the following discussion, every group is tacitly assumed to be a finite group.

\section{Some properties of quadratic rational groups}
In what follows, for a positive integer $n$, we will denote by $\Q_n$ the $n$th cyclotomic field, i.e., the subfield of $\C$ generated by $\Q$ and a primitive $n$th root of unity. As another relevant notation, given a group $G$ and an element $x\in G$, we set $\bg G x=\norm G {\langle x \rangle}/\cent G x$. Observe that conjugation in $G$ defines a faithful permutation action of $\bg G x$ on the set of generators of $\langle x\rangle$.
Also, $\Q(x)$ is contained in $\Q_{|x|}$ and it is not difficult to check that $\bg G x$ is isomorphic to $\Gal(\Q_{|x|}/\Q(x))$. 

We start by recalling some definitions already mentioned in the Introduction, that are fundamental in this work. 

\begin{definition}
    Let $G$ be a group, $x$ an element of $G$ and $r_x$ an integer. We say that $x$ is \emph{$r_x$-semi-rational} (or simply \emph{semi-rational}, when there is no need to emphasize $r_x$) in $G$ if, for every integer $j$ coprime to $|x|$, the element $x^j$ is conjugate either to $x$ or to $x^{r_x}$ in $G$. This is equivalent to the fact that $\bg G x$ induces at most two orbits, represented by $x$ and $x^{r_x}$, in its action on the set of generators of $\langle x\rangle$. 
    If every element of $G$ is semi-rational in $G$, then $G$ is called semi-rational. In this case, if the integers $r_x$ can be chosen to be all the same (say, $r_x=r$ for every $x\in G$), then $G$ is called \emph{$r$-semi-rational} (or \emph{uniformly semi-rational} if there is no need to emphasize $r$).  An important family of uniformly semi-rational groups are the $-1$-semi-rational groups, also called \emph{inverse semi-rational groups}; these are precisely the cut groups.
\end{definition}

\begin{rem}\label{rationality}    
Let $G$ be a group, and let $n$ denote the exponent of $G$. Denoting by $x^G$ the conjugacy class of $x\in G$, 
define $$\rg_G=\{j\in \mathcal{U}(\Z/n\Z)\mid x^j\in x^G\; {\textnormal{ for every }} x\in G\};$$ clearly $\rg_G$ is a subgroup of $\mathcal{U}(\Z/n\Z)$ and, setting $\Q(G)=\Q(\chi(x)\mid x\in G,\;\chi\in\irr G)$, it is not difficult to check that $\rg_G$ is isomorphic to $\Gal(\Q_{n}/\Q(G))$. We call $\rg_G$ the \emph{rationality} of $G$, and $G$ is a rational group if and only if $\rg_G$ is the full group $\mathcal{U}(\Z/n\Z)$. Now, if $r$ is an integer such that $G$ is $r$-semi-rational, then it is easy to see that the set $$\sg_G=\{s\in \mathcal{U}(\Z/n\Z)\mid {\text{ $G$ is $s$-semi-rational}}\}$$ coincides with the coset of $r$ modulo $\rg_G$ (in both the definitions of $\rg_G$ and $\sg_G$ we slightly abuse the notation identifying an integer with its congruence class modulo $n$). In this case, we refer to this coset as to the \emph{semi-rationality} of $G$.
\end{rem} 

\begin{definition}
    Let $G$ be a group. We say that $G$ is \emph{quadratic rational} if, for every irreducible (complex) character $\chi$, the field of values $\Q(\chi)=\Q(\chi(g)\mid g\in G)$ is a quadratic field extension of $\Q$, i.e. $|\Q(\chi):\Q|\leq 2$.
\end{definition}

\begin{rem}\label{RingCharacterization}
As mentioned in the Introduction, quadratic rational groups have a characterization in terms of central units of their group ring. Namely, using a variation of the argument in \cite[Theorem]{rs}, it can be seen that the group $G$ is quadratic rational if and only if the rank of the free part of $\zent{{\mathcal{U}}(\Z G)}$ equals the number of irreducible characters $\chi$ of $G$ such that $\Q(\chi)\neq \Q$ is a real extension of $\Q$. We will not make use of this characterization throughout the present work.
\end{rem}

By \cite{rs}, inverse semi-rational groups are also quadratic rational (and clearly semi-rational, by definition). On the other hand, there are quadratic rational groups that are not even semi-rational and semi-rational groups that are not quadratic rational (see the paragraph following Proposition~\ref{twogroups}). However, when the order of the group is odd, the three properties are in fact equivalent. 

\begin{proposition} \label{oddequiv}
    Let $G$ be a group of odd order. Then the following properties are equivalent.
    \begin{enumerate}
        \item $G$ is semi-rational.
        \item $G$ is quadratic rational.
        \item $G$ is inverse semi-rational.
    \end{enumerate}
\end{proposition}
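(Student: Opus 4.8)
The plan is to funnel everything through condition~(3). Two of the four needed implications are essentially free: an inverse semi-rational group is $-1$-semi-rational, hence semi-rational, which gives $(3)\Rightarrow(1)$; and $(3)\Rightarrow(2)$ is exactly the fact recalled in the Introduction that cut groups are quadratic rational, by~\cite{rs}. So the real work is in $(2)\Rightarrow(3)$ and $(1)\Rightarrow(3)$. The common key input is the classical observation that in a group $G$ of odd order the identity is the only real element: if $x\sim x^{-1}$, then inversion has a fixed point on the odd-sized class $x^G$, which produces an involution, so $x=1$. Equivalently, $1_G$ is the only real-valued irreducible character of $G$, and $1$ is the only rational element of $G$.

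For $(2)\Rightarrow(3)$, suppose $G$ is quadratic rational of odd order and take $\chi\in\irr G$. If $\chi=1_G$, then $\Q(\chi)=\Q$; otherwise $\chi$ is not real-valued by the remark above, so some value $\chi(g)$ lies outside $\R$, forcing both $\Q(\chi)\neq\Q$ and $\Q(\chi)\not\subseteq\R$. A quadratic extension of $\Q$ that is not real is an imaginary quadratic field, so in all cases $\Q(\chi)$ is either $\Q$ or a purely imaginary quadratic extension of $\Q$. By the characterization of cut groups in~\cite{rs}, this says precisely that $G$ is inverse semi-rational.

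For $(1)\Rightarrow(3)$, suppose $G$ is semi-rational of odd order and fix $x\in G$; the case $x=1$ is trivial, so let $m=|x|>1$. View $\bg G x$ as a subgroup of $\aut{\langle x\rangle}\cong\mathcal{U}(\Z/m\Z)$; since $\aut{\langle x\rangle}$ acts regularly on the $\varphi(m)$ generators of $\langle x\rangle$, the $\bg G x$-action on these generators is free, so all its orbits have size $|\bg G x|$ and their number equals the index $[\mathcal{U}(\Z/m\Z):\bg G x]$. Semi-rationality of $x$ says there are at most two orbits; a single orbit would make $x$ rational, hence $x\sim x^{-1}$, impossible for $m>1$ in a group of odd order. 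So $\bg G x$ has index exactly $2$ in $\mathcal{U}(\Z/m\Z)$, and since $x\not\sim x^{-1}$ the class of $-1$ is not in $\bg G x$, hence lies in the unique nontrivial coset $-\bg G x$. Therefore the second orbit on the generators of $\langle x\rangle$ is precisely the orbit of $x^{-1}$, i.e.\ $x$ is $-1$-semi-rational. As $x$ was arbitrary, $G$ is $-1$-semi-rational, as wanted.

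The whole argument is short once the odd-order fact and the field characterization of cut groups from~\cite{rs} are available; the only step that deserves a moment of care is the orbit count in $(1)\Rightarrow(3)$ — recognizing that the faithful action of $\bg G x$ on the generators of $\langle x\rangle$ is in fact free, so that ``at most two orbits'' is the same as ``index at most $2$'', after which locating $-1$ in the coset structure finishes the proof. I do not anticipate a genuine obstacle.
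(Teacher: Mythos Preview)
Your proof is correct and follows essentially the same route as the paper's: both funnel through (3), use the odd-order fact that $1_G$ is the only real irreducible character to get $(2)\Rightarrow(3)$ via the field characterization of cut groups, and treat $(3)\Rightarrow(1),(2)$ as immediate. The only difference is that for $(1)\Rightarrow(3)$ the paper simply cites \cite[Remark~13]{CD}, whereas you write out the orbit-counting argument explicitly; your version is self-contained and the index-$2$/coset reasoning is clean and correct.
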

\begin{proof}
    As already observed, $(3)$ implies $(1)$ and $(2)$. Also, \cite[Remark 13]{CD} proves that $(1)$ implies  $(3)$, so we need only to prove that $(2)$ implies $(3)$.

    By \cite[Exercise 3.16]{isaacscharacter} $G$ does not have any non-principal real-valued irreducible character; hence, $G$ being quadratic rational, we have $\Q(\chi)\cap \R=\Q$ for every $\chi\in \irr G$. The conclusion now follows by \cite[Proposition 2.2]{bac}.
\end{proof}

The next lemma collects two straightforward properties of quadratic rational groups.

\begin{proposition} \label{qrfacts}
    Let $G$ be a group.
    \begin{enumerate}
        \item If $G$ is quadratic rational and $N$ is a normal subgroup of $G$, then $G/N$ is quadratic rational.
        \item If $G$ is abelian, then $G$ is quadratic rational if and only if the orders of the elements of $G$ belong to the set $\{1,2,3,4,6\}$.       
    \end{enumerate}
\end{proposition}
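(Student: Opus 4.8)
The plan is to treat the two parts separately, both by elementary character theory.

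For part (1) I would argue by inflation. Let $\pi\colon G\to G/N$ be the canonical projection. For each $\chi\in\irr{G/N}$ the composite $\chi\circ\pi$ is an irreducible character of $G$, and the set of its values $\{(\chi\circ\pi)(g)\mid g\in G\}$ coincides with $\{\chi(\b g)\mid \b g\in G/N\}$; hence $\Q(\chi)=\Q(\chi\circ\pi)$. Since $G$ is quadratic rational we have $|\Q(\chi\circ\pi):\Q|\le 2$, so $|\Q(\chi):\Q|\le 2$ and $G/N$ is quadratic rational.

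For part (2), since $G$ is abelian every element of $\irr G$ is a linear character, i.e.\ a homomorphism $\lambda\colon G\to\C^{\times}$, whose image is the group of complex $m$-th roots of unity with $m$ the order of $\lambda$ in the character group $\widehat G=\mathrm{Hom}(G,\C^{\times})$. Consequently $\Q(\lambda)=\Q_m$, and $|\Q_m:\Q|=\varphi(m)\le 2$ precisely when $m\in\{1,2,3,4,6\}$. Therefore $G$ is quadratic rational if and only if every element of $\widehat G$ has order in $\{1,2,3,4,6\}$. Finally I would use the fact that, for a finite abelian group, $G\cong\widehat G$ — or, more concretely, that for $g\in G$ of order $d$ one can extend a faithful linear character of $\langle g\rangle$ to a linear character $\lambda$ of $G$, so $\lambda(g)$ is a primitive $d$-th root of unity and $\Q_d\sbs\Q(\lambda)$, while conversely the order of any $\lambda\in\widehat G$ divides $\exp(G)$ and is attained as the order of an element of $G$. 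This shows that the set of element orders of $\widehat G$ equals the set of element orders of $G$, which yields the stated equivalence.

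Neither part poses a genuine obstacle. The only step deserving a little care is in (2): the identification of the field of values of a linear character with a cyclotomic field $\Q_m$ and the translation between orders of elements of $G$ and of $\widehat G$; this is the point at which I would be most explicit.
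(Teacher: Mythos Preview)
Your proposal is correct and follows essentially the same route as the paper: part~(1) is handled by inflation, and part~(2) by identifying $\Q(\lambda)$ with the cyclotomic field $\Q_m$ (where $m$ is the order of $\lambda$) and translating between element orders in $G$ and in $\widehat G$. The paper phrases the latter slightly more directly---producing for each $g$ a linear character with $|\Q(\lambda):\Q|=\varphi(|g|)$ and, for the converse, using that $\Q(\lambda)\subseteq\Q_{\exp(G)}$---but the content is the same.
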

\begin{proof}
Property (1) simply follows from the fact that every character in $\irr{G/N}$ can be viewed as an irreducible character of $G$, by inflation. As for (2), assume that $G$ is abelian. Then, for every $g\in G$, there exists an irreducible character $\lambda$ of $G$ such that  $|\Q (\lambda):\Q|=\phi (|g|)$ (where $\phi$ denotes the Euler function), hence we have $\phi(|g|)\leq 2$. As a consequence we get $|g|\in \{1,2,3,4,6\}$. Conversely, $G$ being abelian, there exists $g\in G$ such that the exponent of $G$ is $|g|$ and, since for every $\lambda\in\irr G$ we have $\Q(\lambda)\subseteq\Q_{|g|}$, the desired conclusion follows from the fact that $\phi(|g|)\leq 2$. 
\end{proof}

Next, we state a theorem by J.F. Tent (see \cite[Theorem A]{tentquadraticrat}) concerning the set of prime divisors of the order of a solvable quadratic rational group.

\begin{theorem}\label{spectraquadraticrational}
    Let $G$ be a solvable quadratic rational group. Then $\pi(G)\subseteq \{2,3,5,7,13\}$.
\end{theorem}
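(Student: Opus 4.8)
We sketch a possible approach; it proceeds by induction on $|G|$, reducing first to a tightly controlled configuration and then converting quadratic rationality into an orbit condition for a solvable linear group. Suppose the statement fails and let $G$ be a counterexample of minimal order; fix a prime $p\in\pi(G)$ with $p\notin\{2,3,5,7,13\}$, so $p\geq 11$. By Proposition~\ref{qrfacts}(1) every proper quotient of $G$ is solvable and quadratic rational, hence, by minimality, has all its prime divisors in $\{2,3,5,7,13\}$. A nontrivial normal $p'$-subgroup $N$ would make $G/N$ a proper quotient still divisible by $p$, so $\oh{p'}G=1$ and every minimal normal subgroup of $G$ is an elementary abelian $p$-group; likewise, passing to $G/\frat{\oh pG}$ if necessary forces $Q:=\oh pG=\fit G$ to be elementary abelian (and self-centralizing, since $G$ is solvable), and quotienting out a proper $H$-invariant subgroup of $Q$ if one existed forces $Q$ to be an \emph{irreducible} $\F_pH$-module, where $H:=G/Q\hookrightarrow\GL{n}{p}$ with $|Q|=p^n$. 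Thus $H$ is a solvable, quadratic rational group with $\pi(H)\subseteq\{2,3,5,7,13\}$.

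The decisive step is a Clifford-theoretic estimate. Fix a non-principal $\lambda\in\irr Q$ and any $\chi\in\irr{G\mid\lambda}$; since $Q$ is abelian, $\chi_Q=e\sum_{\mu\in\lambda^H}\mu$ for some $e\in\N$, where $\lambda^H$ is the $H$-orbit of $\lambda$ in $\irr Q$. The group $\Gal(\Q_p/\Q)\cong\U(\Z/p\Z)$ acts on $\irr Q$ by scalar multiplication, commuting with the action of $H$, and the automorphism attached to $c\in\U(\Z/p\Z)$ fixes every value $\chi(v)$, $v\in Q$, exactly when $c\lambda^H=\lambda^H$, i.e. when $c$ belongs to the subgroup $C_\lambda:=\{c\mid c\lambda\in\lambda^H\}$ of $\U(\Z/p\Z)$. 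Hence $\Q(\chi(v)\mid v\in Q)$ is the fixed field of $C_\lambda$ inside $\Q_p$, so $[\Q(\chi):\Q]\geq[\U(\Z/p\Z):C_\lambda]$, and quadratic rationality forces this index to be at most $2$; thus $|C_\lambda|\geq(p-1)/2$. Since $C_\lambda\lambda\subseteq\lambda^H$ consists of $|C_\lambda|$ distinct points, we conclude: \emph{every $H$-orbit on $\irr Q\setminus\{0\}$ has length at least $(p-1)/2$, and each such orbit contains at least $(p-1)/2$ scalar multiples of any of its elements.}

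It remains to show that no such $H$ exists when $p\geq 11$ and $p\neq 13$. If $n=1$ this is elementary: $H$ is cyclic of order dividing $p-1$ with $(p-1)/2\leq|H|$, while $H$, being a cyclic quadratic rational group, has $|H|\in\{1,2,3,4,6\}$ by Proposition~\ref{qrfacts}(2); together with $|H|\mid p-1$ this rules out $p=11$ and $p\geq 17$. For $n\geq 2$ I would bring in the structure theory of irreducible solvable linear groups: reduce to the quasi-primitive case, apply the classical classification so that (after the usual reductions) $H$ is semilinear over a subfield or lies in the normalizer of a symplectic-type $r$-subgroup, and in each of the resulting short list of possibilities confront the orbit condition above — all orbits of length at least $(p-1)/2$, with almost every scalar stabilizing every orbit — against $\pi(H)\subseteq\{2,3,5,7,13\}$ and the quadratic rationality of the small sections of $H$ that the orbit condition forces to appear. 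I expect this last analysis to be the real work of the proof, and the point at which the precise set $\{2,3,5,7,13\}$ is pinned down; the inductive reduction and the Clifford estimate above are comparatively routine.
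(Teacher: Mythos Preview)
The paper does not prove this theorem: it is quoted verbatim as \cite[Theorem~A]{tentquadraticrat} and used as a black box throughout. So there is no proof in the paper to compare your attempt against.

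On its own merits, your reduction is sound. Minimality does force $\oh{p'}G=1$, the unique minimal normal subgroup $Q$ is then an elementary abelian Sylow $p$-subgroup equal to $\fit G$, self-centralization follows from solvability, and irreducibility of $Q$ as an $\F_pH$-module follows because any proper nontrivial $H$-submodule would be a normal subgroup of $G$ not containing the full Sylow $p$-subgroup. Your Clifford-theoretic estimate is also correct and is essentially Lemma~\ref{lemmant} specialized to this situation: the subgroup $C_\lambda$ you define is exactly the image of $I^*_\lambda/I_\lambda$ in $\Gal(\Q_p/\Q)$, and quadratic rationality of $\lambda^G$ forces $|C_\lambda|\geq(p-1)/2$. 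The $n=1$ case is then disposed of as you say.

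The genuine gap is the one you name yourself: the case $n\geq 2$ is not done, and it is the entire content of the theorem. You have reduced to the statement ``no solvable $H\leq\GL n p$ with $\pi(H)\subseteq\{2,3,5,7,13\}$, $H$ quadratic rational, acts irreducibly on $\F_p^n$ with every nonzero dual orbit of length at least $(p-1)/2$, for $p\geq 11$, $p\neq 13$'', and then stopped. Saying you would ``bring in the structure theory of irreducible solvable linear groups'' and that you ``expect this last analysis to be the real work'' is an outline of intentions, not an argument; the quasi-primitive reduction, the semilinear case, and the symplectic-type normalizer case each require genuine computation against the orbit bound, and it is precisely in that analysis that $11$ is eliminated while $13$ survives. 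As written, this is a correct setup for Tent's theorem but not a proof of it.
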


In Lemma~\ref{stimeqr} we derive, as an immediate consequence of Lemma~\ref{qrfields}, some information on the sizes of the normalizers of elements in quadratic rational groups.  

\begin{lemma} \label{qrfields}
    Let $K$ be a subfield of $\C$ generated by elements of degree at most $2$ over $\Q$, and let $d$ be a positive integer. Then $|K\cap \Q_d : \Q|$ divides $2^{|\pi (d)|}$ if the $2$-part of $d$ divides $4$, otherwise $|K\cap \Q_d : \Q|$ divides $2^{|\pi (d)|+1}$.
\end{lemma}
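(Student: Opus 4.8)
The plan is to exploit the fact that a field generated by elements of degree at most $2$ over $\Q$ is an abelian extension of $\Q$ of exponent dividing $2$, and then to intersect this information with the cyclotomic structure of $\Q_d$. Set $L:=K\cap\Q_d$. Since $L\sbs\Q_d$, it is a finite extension of $\Q$, so we may write $L=\Q(\beta_1,\dots,\beta_m)$ with $\beta_j\in L\sbs K$; each $\beta_j$ already lies in the subfield of $K$ generated by finitely many of the (degree $\le 2$) generators of $K$, hence $L$ is contained in a subfield $K_0$ of $K$ that is a compositum of \emph{finitely many} extensions of $\Q$ of degree at most $2$. A compositum of finitely many Galois extensions is Galois, and $\Gal(K_0/\Q)$ embeds into the product of the corresponding Galois groups, each of order at most $2$; therefore $K_0/\Q$ is Galois with $\Gal(K_0/\Q)$ an elementary abelian $2$-group, and the same is then true of its subextension $L/\Q$. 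In particular $|L:\Q|=|\Gal(L/\Q)|$ is a power of $2$ and $\Gal(L/\Q)$ has exponent dividing $2$.

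On the other hand, $L\sbs\Q_d$ forces $\Gal(L/\Q)$ to be a quotient of $\Gal(\Q_d/\Q)\cong\mathcal{U}(\Z/d\Z)$; being an elementary abelian $2$-group, $\Gal(L/\Q)$ is then a quotient of the largest quotient of $\mathcal{U}(\Z/d\Z)$ of exponent dividing $2$, so $|L:\Q|$ divides the order $E(d)$ of the latter. It remains to compute $E(d)$. Writing $d=\prod_{p\in\pi(d)}p^{a_p}$, the Chinese Remainder Theorem gives $\mathcal{U}(\Z/d\Z)\cong\prod_{p\in\pi(d)}\mathcal{U}(\Z/p^{a_p}\Z)$, so $E(d)$ is the product of the corresponding orders for the factors. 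For an odd prime $p$ the factor $\mathcal{U}(\Z/p^{a_p}\Z)$ is cyclic of even order and hence contributes a factor $2$; for $p=2$ the factor $\mathcal{U}(\Z/2^{a_2}\Z)$ contributes $1$ if $a_2\le 1$, contributes $2$ if $a_2=2$, and contributes $4$ if $a_2\ge 3$, since in the last range $\mathcal{U}(\Z/2^{a_2}\Z)\cong C_2\times C_{2^{a_2-2}}$. Multiplying the contributions, $E(d)=2^{|\pi(d)|}$ when the $2$-part of $d$ divides $4$ (one factor $2$ per odd prime divisor of $d$, plus at most one more from the prime $2$), and $E(d)=2^{|\pi(d)|+1}$ otherwise (one factor $2$ per odd prime divisor, plus a factor $4$ from the prime $2$). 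Since $|L:\Q|$ divides $E(d)$, this is precisely the claim.

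I do not anticipate any genuine obstacle. The only point requiring a little care is the reduction, in the first step, to a finite subextension $K_0/\Q$, so that ordinary finite Galois theory applies and one legitimately concludes that $L/\Q$ is Galois with elementary abelian $2$-group as Galois group; after that, the whole argument is a bookkeeping of $2$-adic valuations in the Chinese Remainder decomposition of $\mathcal{U}(\Z/d\Z)$, using the standard structure of $\mathcal{U}(\Z/2^a\Z)$.
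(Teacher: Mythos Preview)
Your argument is correct and is essentially the standard one: reduce to a finite compositum $K_0$ of quadratic extensions, conclude that $L=K\cap\Q_d$ has elementary abelian $2$-group as Galois group over $\Q$, and bound $|\Gal(L/\Q)|$ by the order of the maximal exponent-$2$ quotient of $\mathcal{U}(\Z/d\Z)$ via the Chinese Remainder decomposition. The paper does not give its own proof but simply cites \cite[Lemma~8]{CD}; your write-up is a self-contained version of that same argument.

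One cosmetic remark: your displayed equality $E(d)=2^{|\pi(d)|}$ in the first case is not literally an equality when $d\equiv 2\pmod 4$ (there the $2$-factor contributes $1$, so $E(d)=2^{|\pi(d)|-1}$). Your parenthetical ``plus at most one more from the prime $2$'' already acknowledges this, and since only the divisibility $E(d)\mid 2^{|\pi(d)|}$ is needed, the conclusion is unaffected.
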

\begin{proof}
    This follows from the proof of \cite[Lemma 8]{CD}.
\end{proof}

\begin{lemma} \label{stimeqr}
Let $G$ be a quadratic rational group, and let $x\in G$. If the $2$-part of $|x|$ divides $4$ then $\phi(|x|)$ divides $ 2^{|\pi(x)|}\cdot | \bg G { x } |$, otherwise $\phi(|x|)$ divides $ 2^{|\pi(x)|+1}\cdot |\bg G { x }|$.
\end{lemma}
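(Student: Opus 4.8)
The plan is to reduce the statement to a direct application of Lemma~\ref{qrfields}, combined with the Galois-theoretic description of $\bg G x$ recalled at the beginning of this section. First I would record the identity
$\phi(|x|)=|\Q(x):\Q|\cdot|\bg G x|$: since $\Q(x)\subseteq\Q_{|x|}$ and $\bg G x\cong\Gal(\Q_{|x|}/\Q(x))$, we have $|\bg G x|=|\Q_{|x|}:\Q(x)|$, while $|\Q_{|x|}:\Q|=\phi(|x|)$; multiplying degrees along the tower $\Q\subseteq\Q(x)\subseteq\Q_{|x|}$ yields the claim.

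Next I would verify that $K:=\Q(x)$ satisfies the hypothesis of Lemma~\ref{qrfields}, i.e.\ that it is a subfield of $\C$ generated by elements of degree at most $2$ over $\Q$. By definition $\Q(x)=\Q(\chi(x)\mid\chi\in\irr G)$, and for every $\chi\in\irr G$ one has $\chi(x)\in\Q(\chi)$, hence $\Q(\chi(x))\subseteq\Q(\chi)$; since $G$ is quadratic rational, $|\Q(\chi):\Q|\le 2$, so each generator $\chi(x)$ of $K$ indeed has degree at most $2$ over $\Q$.

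Finally I would invoke Lemma~\ref{qrfields} with $d=|x|$. As $K\subseteq\Q_{|x|}$ we have $K=K\cap\Q_d$, and $\pi(d)=\pi(x)$; the lemma then gives that $|\Q(x):\Q|=|K\cap\Q_d:\Q|$ divides $2^{|\pi(x)|}$ when the $2$-part of $|x|$ divides $4$, and divides $2^{|\pi(x)|+1}$ otherwise. Plugging this into the identity from the first step shows that $\phi(|x|)$ divides $2^{|\pi(x)|}\cdot|\bg G x|$ in the first case and $2^{|\pi(x)|+1}\cdot|\bg G x|$ in the second, which is exactly the assertion. I do not foresee a genuine obstacle: the only steps needing a moment's care are the passage from the quadratic rationality of each $\Q(\chi)$ to a degree bound on the individual values $\chi(x)$, and the field-tower bookkeeping, both of which are routine; all the arithmetic content is already packaged in Lemma~\ref{qrfields}.
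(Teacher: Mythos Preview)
Your proposal is correct and follows essentially the same approach as the paper: both apply Lemma~\ref{qrfields} to $K=\Q(x)$ and $d=|x|$, justify that $K$ is generated by quadratic elements via the inclusion $\chi(x)\in\Q(\chi)$, and then use the tower $\Q\subseteq\Q(x)\subseteq\Q_{|x|}$ together with $\bg G x\cong\Gal(\Q_{|x|}/\Q(x))$ to conclude. Your write-up is just a bit more explicit about the field-degree bookkeeping.
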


\begin{proof}
    Setting $d=|x|$ and $m=|\pi(d)|$, consider the field $K=\Q(x)\subseteq \Q_d$. Since $K$ is generated by quadratic elements (namely, the values $\chi(x)$ for $\chi\in\irr G$), we can apply Lemma~\ref{qrfields} to $K$ and $d$.
    If the $2$-part of $d$ divides $4$ then $|\Q(x):\Q|$ divides $2^m$, hence $\phi(d)$ divides $2^m\cdot|\Q_{d}: \Q(x)|$; but since $\Gal(\Q_{d}/\Q(x))\cong \bg G x$, the claim follows. The complementary case can be treated similarly. 
\end{proof}

It will be also useful to introduce a generalization of quadratic rationality and semi-rationality which involves normal subgroups. 

\begin{definition}
    A normal subgroup $N$ of $G$ is called \emph{quadratic rational in $G$} if, for every \(\theta\in\irr N\), we have $|\Q(\theta^G):\Q|\leq 2$.
\end{definition}

\begin{definition}
    A normal subgroup $N$ of $G$ is called \emph{semi-rational in $G$} if, for every $x \in N$,  we have $|\Q(x):\Q|\leq 2$.
\end{definition}

Observe that every normal subgroup of a semi-rational group is clearly semi-rational in $G$. In the following proposition we see that a similar situation holds for quadratic rationality. As customary, for $N\unlhd G$ and $\theta\in\irr N$, we denote by $\irr{G|\theta}$ the set of the irreducible constituents of the induced character $\theta^G$.
 
\begin{proposition}\label{normalinqr}
    Let $G$ be a group and $N$ a normal subgroup of $G$. Then the following conclusions hold.
    \begin{enumerate}
    \item For every $\theta\in\irr N$, $\chi\in\irr{G|\theta}$ and $x\in N$, we have $\Q(\theta^G(x))=\Q(\chi(x))$ and $\Q(\theta^G)\subseteq\Q(\chi)$.
    \item If $G$ is quadratic rational, then $N$ is quadratic rational in $G$. 
    \end{enumerate}
\end{proposition}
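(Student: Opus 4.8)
The plan is to derive part (2) at once from part (1): if $\theta\in\irr N$ and we pick any $\chi\in\irr{G|\theta}$ (this set is non-empty, because $\theta^G$ is a non-zero character of $G$, hence has an irreducible constituent), then part (1) gives $\Q(\theta^G)\subseteq\Q(\chi)$, and the quadratic rationality of $G$ forces $|\Q(\theta^G):\Q|\le|\Q(\chi):\Q|\le 2$; this is exactly the statement that $N$ is quadratic rational in $G$. So all the work is in part (1), which is a short Clifford-theoretic computation.

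For part (1), I would fix $\theta\in\irr N$, write $I=I_G(\theta)$ for its inertia group in $G$ and $\theta_1=\theta,\dots,\theta_t$ for the $G$-orbit of $\theta$ inside $\irr N$. By Clifford's theorem, any $\chi\in\irr{G|\theta}$ restricts to $N$ as $\chi_N=e_\chi(\theta_1+\cdots+\theta_t)$, with $e_\chi=[\chi_N,\theta]$ a positive integer; hence $\chi(x)=e_\chi\sum_{i=1}^t\theta_i(x)$ for every $x\in N$. On the other hand, since $N\unlhd G$, the induced character $\theta^G$ vanishes off $N$, and for $x\in N$ each $G$-conjugate of $x$ again lies in $N$, so the induction formula becomes $\theta^G(x)=\frac1{|N|}\sum_{g\in G}\theta(g^{-1}xg)=\frac1{|N|}\sum_{g\in G}\theta^g(x)$, where $\theta^g\colon n\mapsto\theta(g^{-1}ng)$; as $g$ runs over $G$ each $\theta_i$ is hit exactly $|I|$ times, so $\theta^G(x)=[I:N]\sum_{i=1}^t\theta_i(x)$. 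Comparing the two formulas, $\chi(x)=\frac{e_\chi}{[I:N]}\,\theta^G(x)$ for all $x\in N$; that is, $\chi(x)$ and $\theta^G(x)$ differ by the positive rational number $e_\chi/[I:N]$ (and they vanish simultaneously), so $\Q(\chi(x))=\Q(\theta^G(x))$ for every $x\in N$. Since $\theta^G$ is supported on $N$, the field $\Q(\theta^G)$ is generated by the values $\theta^G(x)$, $x\in N$, each of which lies in $\Q(\chi(x))\subseteq\Q(\chi)$; therefore $\Q(\theta^G)\subseteq\Q(\chi)$, and (1) follows.

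I do not expect a genuine obstacle. The one place demanding a little care is the evaluation of $\theta^G$ on $N$: one must invoke the normality of $N$ both to see that $\theta^G$ is supported on $N$ and to reduce the summation in the induction formula to a sum over all of $G$, and then correctly count the multiplicity $|I|$ with which each $G$-conjugate of $\theta$ occurs. Once this identity is in place, the rest is just bookkeeping with positive rational scalars, which does not affect fields of values; in particular nothing here uses quadratic rationality, so part (1) holds for an arbitrary finite group $G$ and normal subgroup $N$.
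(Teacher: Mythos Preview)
Your proof is correct and follows essentially the same approach as the paper: both use Clifford's theorem to write $\chi(x)=e\sum\theta^t(x)$ and the induction formula to write $\theta^G(x)=|I/N|\sum\theta^t(x)$ for $x\in N$, conclude that the two values differ by a positive rational scalar, and then deduce $\Q(\theta^G)\subseteq\Q(\chi)$ from the fact that $\theta^G$ vanishes off $N$. Part~(2) is derived from part~(1) in the same immediate way.
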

\begin{proof}
Let $\theta\in \irr N$ and $\chi\in\irr{G|\theta}$. Also, let $I$ be the inertia subgroup $I_G(\theta)$ of $\theta$ in $G$, let $T$ be a right transversal for $I$ in $G$, and define $e=[\chi,\theta^G]$. By Clifford's Theory, for $x\in N$ we have $\chi(x)=e \sum_{t\in T}\theta^{t}(x)$; on the other hand we have $\theta^G(x)=|I/N|\sum_{t\in T}\theta^{t}(x)$, so it easily follows that $\Q(\theta^G(x))=\Q(\chi(x))$. Taking into account that $\theta^G(g)=0$ for every $g\in G\setminus N$, we get $\Q(\theta^G)\subseteq\Q(\chi)$ and the first conclusion is proved. The second conclusion follows at once.
\end{proof}

Next, we recall some techniques that are relevant when working with quadratic rational groups (see~\cite{navarrotent});  these techniques use the concept of the so called semi-inertia subgroup of a character.

\begin{definition}\label{semi}
Let $N\normal G$ and $\theta\in \irr N$. The \emph{semi-inertia subgroup} of $\theta$ in $G$ is the subgroup $$I^*_\theta=\{h\in G \ | \  \theta^h=\theta^\sigma \text{ for some }\sigma \in \Gal(\Q(\theta)/\Q)\}.$$
\end{definition}

Observe that $I^*_\theta$ contains the inertia subgroup $I_\theta=I_G(\theta)$ as a normal subgroup.

\begin{lemma}\label{lemmant}
    Let $N\normal G$, $\theta\in\irr N$ and $I_{\theta},I^*_{\theta}$ as in Definition~{\rm \ref{semi}}. Then, setting $g\mapsto \sigma$ where $\theta^g=\theta^\sigma$, we get a surjective group homomorphism from $I^*_{\theta}$ to $\Gal(\Q(\theta)/\Q(\theta^G))$ with kernel $I_{\theta}$. In particular, $\theta^G$ is rational if and only if $|I_\theta^*/I_\theta|=|\Gal(\Q(\theta)/\Q)|$, and this happens if there is some character $\chi\in \irr{G|\theta}$ that is rational.
\end{lemma}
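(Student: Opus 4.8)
The plan is to analyze the map $\Phi\colon I^*_\theta \to \Gal(\Q(\theta)/\Q)$ defined by sending $g$ to the (unique) $\sigma$ with $\theta^g = \theta^\sigma$. First I would check that $\sigma$ is well defined: if $\theta^g = \theta^\sigma = \theta^\tau$, then $\sigma\tau^{-1}$ fixes every value $\theta(n)$, $n\in N$, hence fixes $\Q(\theta)$ pointwise, so $\sigma = \tau$ in $\Gal(\Q(\theta)/\Q)$. That $\Phi$ is a homomorphism follows because the Galois action and the $G$-conjugation action on $\irr N$ commute: for $g,h\in I^*_\theta$ with $\theta^g=\theta^\sigma$ and $\theta^h=\theta^\tau$, one has $\theta^{gh} = (\theta^g)^h = (\theta^\sigma)^h = (\theta^h)^\sigma = (\theta^\tau)^\sigma = \theta^{\sigma\tau}$, where the middle equality uses that applying $\sigma$ to character values commutes with the permutation action of $h$. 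The kernel is exactly $\{g \in I^*_\theta \mid \theta^g = \theta\} = I_\theta$, which is therefore normal in $I^*_\theta$ (as already noted in the excerpt).

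Next I would pin down the image of $\Phi$. By Proposition~\ref{normalinqr}(1) and the definition of $\Q(\theta^G)$, the subfield $\Q(\theta^G)$ is contained in $\Q(\theta)$, and $\Gal(\Q(\theta)/\Q(\theta^G))$ is precisely the subgroup of $\sigma\in\Gal(\Q(\theta)/\Q)$ fixing $\theta^G$. Now $\sigma$ fixes $\theta^G$ if and only if $(\theta^\sigma)^G = \theta^G$, i.e.\ $\theta^\sigma$ is $G$-conjugate to $\theta$ (two irreducible characters of $N$ induce the same character of $G$ iff they are $G$-conjugate), i.e.\ $\theta^\sigma = \theta^g$ for some $g\in G$ — and such a $g$ lies in $I^*_\theta$ by definition. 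Hence $\sigma$ fixes $\theta^G$ exactly when $\sigma$ is in the image of $\Phi$; that is, $\mathrm{Im}\,\Phi = \Gal(\Q(\theta)/\Q(\theta^G))$. Combined with the previous paragraph, $\Phi$ induces an isomorphism $I^*_\theta/I_\theta \cong \Gal(\Q(\theta)/\Q(\theta^G))$, which is the main assertion.

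For the ``in particular'' clause: $\theta^G$ is rational iff $\Q(\theta^G) = \Q$, iff $\Gal(\Q(\theta)/\Q(\theta^G)) = \Gal(\Q(\theta)/\Q)$, iff $|I^*_\theta/I_\theta| = |\Gal(\Q(\theta)/\Q)|$ by the isomorphism just established. Finally, if some $\chi\in\irr{G\mid\theta}$ is rational, then by Proposition~\ref{normalinqr}(1) we have $\Q(\theta^G)\subseteq\Q(\chi) = \Q$, so $\theta^G$ is rational. I do not anticipate a serious obstacle here: everything reduces to the standard fact that the $G$-conjugation action and the Galois action on $\irr N$ commute, together with the already-recorded containment $\Q(\theta^G)\subseteq\Q(\theta)$ and the Clifford-theoretic description of induced characters from the previous proposition; the only point requiring a little care is checking that $\theta^\sigma$ being $G$-conjugate to $\theta$ is equivalent to $\sigma$ fixing $\theta^G$, which uses that $\theta^G$ determines the $G$-orbit of $\theta$.
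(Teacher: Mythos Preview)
Your argument is correct and is the standard proof of this lemma. The paper itself does not supply a proof but simply cites \cite[Lemma~2.3]{navarrotent}, so your self-contained verification presumably reproduces what that reference contains. One cosmetic remark: the containment $\Q(\theta^G)\subseteq\Q(\theta)$ is not literally the content of Proposition~\ref{normalinqr}(1) (which gives $\Q(\theta^G)\subseteq\Q(\chi)$ for $\chi\in\irr{G\mid\theta}$); it follows more directly from the induction formula $\theta^G(x)=|I_\theta/N|\sum_{t}\theta^t(x)$ for $x\in N$ (derived in the proof of that proposition) together with the vanishing of $\theta^G$ off $N$, so this is only a matter of phrasing.
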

\begin{proof}
    See \cite[Lemma 2.3]{navarrotent}.
\end{proof}

The following results are useful to produce semi-rational elements in quadratic rational groups.

\begin{lemma} \label{semiratpel}
 Let $G$ be a quadratic rational group and $x\in G$. If the order of $x$ is $p^n$ or $2p^n$ where $p$ is an odd prime number, then $x$ is semi-rational in $G$.
\end{lemma}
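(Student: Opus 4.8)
The plan is to reduce the statement to an inequality about the field of values $\Q(x)$ and then to exploit that $\Q_{p^n}$ has essentially only one subfield of small degree over $\Q$. Recall from the beginning of this section that $\bg G x\cong\Gal(\Q_{|x|}/\Q(x))$ and that $\bg G x$ embeds into $\U(\Z/|x|\Z)$, acting by multiplication on the generators of $\langle x\rangle$. Hence the number of $\bg G x$-orbits on these generators equals $[\U(\Z/|x|\Z):\bg G x]=|\Q(x):\Q|$, and by definition $x$ is semi-rational in $G$ if and only if this number is at most $2$, i.e.\ if and only if $|\Q(x):\Q|\le 2$. So it suffices to prove $|\Q(x):\Q|\le 2$.

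We may assume $|x|>2$, as otherwise $x$ is rational and there is nothing to prove. Since $p$ is odd and $|x|\in\{p^n,2p^n\}$, we have $\Q_{|x|}=\Q_{p^n}$, and of course $\Q(x)\sbs\Q_{p^n}$. Now $\Gal(\Q_{p^n}/\Q)\cong\U(\Z/p^n\Z)$ is cyclic of even order $\phi(p^n)$, so it has a unique subgroup of index $2$; correspondingly, $\Q_{p^n}$ has a unique quadratic subfield $F$, and $\Q$ and $F$ are the only subfields of $\Q_{p^n}$ of degree at most $2$ over $\Q$. On the other hand, for every $\chi\in\irr G$ the value $\chi(x)$ belongs to $\Q(\chi)$ and, being a sum of $|x|$th roots of unity, also to $\Q_{p^n}$; therefore $\chi(x)\in\Q(\chi)\cap\Q_{p^n}$. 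As $G$ is quadratic rational we have $|\Q(\chi):\Q|\le 2$, so $\Q(\chi)\cap\Q_{p^n}$ is a subfield of $\Q_{p^n}$ of degree at most $2$ over $\Q$ and hence is contained in $F$. Thus $\chi(x)\in F$ for every $\chi\in\irr G$, which gives $\Q(x)=\Q(\chi(x)\mid\chi\in\irr G)\sbs F$ and finally $|\Q(x):\Q|\le 2$, as wanted.

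The case $|x|=p^n$ is routine: there $|\pi(x)|=1$ and the $2$-part of $|x|$ divides $4$, so Lemma~\ref{stimeqr} already yields $\phi(|x|)\mid 2|\bg G x|$, hence $|\Q(x):\Q|\le 2$ directly. The main obstacle is the case $|x|=2p^n$, where $|\pi(x)|=2$: here Lemma~\ref{qrfields} (equivalently Lemma~\ref{stimeqr}) only gives that $|\Q(x):\Q|$ divides $4$, and the extra ingredient needed to rule out the value $4$ is precisely the uniqueness of the quadratic subfield of $\Q_{p^n}$, equivalently the cyclicity of $\U(\Z/p^n\Z)$.
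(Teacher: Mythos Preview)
Your proof is correct and follows the same idea as the paper: both arguments use $\Q_{|x|}=\Q_{p^n}$ and then bound $|\Q(x):\Q|$ by $2$ via the fact that $\Q(x)$ is generated by quadratic elements inside $\Q_{p^n}$. The paper simply invokes Lemma~\ref{qrfields} with $d=p^n$ (so $|\pi(d)|=1$), whereas you unpack that argument directly by noting that $\Q_{p^n}$ has a unique quadratic subfield.

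One small correction to your closing commentary: it is not quite true that Lemma~\ref{qrfields} (or Lemma~\ref{stimeqr}) ``only gives $|\Q(x):\Q|\mid 4$'' when $|x|=2p^n$. That weaker bound arises only if you apply the lemma with $d=|x|=2p^n$; the paper instead takes $d=p^n$ (legitimate since $\Q(x)\subseteq\Q_{p^n}$), and then $|\pi(d)|=1$ yields $|\Q(x):\Q|\mid 2$ immediately. So the ``extra ingredient'' you identify (cyclicity of $\U(\Z/p^n\Z)$) is precisely what makes the choice $d=p^n$ effective in Lemma~\ref{qrfields} as well, and no separate argument is needed.
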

\begin{proof}
Since $G$ is quadratic rational, the field $K=\Q(x)\subseteq \Q_{|x|}=\Q_{p^n}$ is generated by complex numbers having degree at most $2$ over $\Q$. So we can apply Lemma \ref{qrfields}, obtaining that $|\Q(x):\Q|$ divides $2^{|\pi(p^n)|}=2$.
\end{proof}

 \begin{proposition}    
 \label{normalcycl}
    Let $G$ be a quadratic rational group and let $N$ be a cyclic normal subgroup of $G$. Then every generator of $N$ is semi-rational in $G$.
\end{proposition}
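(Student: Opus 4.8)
The plan is to exploit that $N$ is quadratic rational in $G$ — which holds by Proposition~\ref{normalinqr}(2), since $G$ is quadratic rational — and to translate the resulting constraint on the field of values of an induced character into a lower bound for $|\bg G x|$ via the semi-inertia machinery of Lemma~\ref{lemmant}.

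First I would set up notation. Write $N=\langle x\rangle$ with $|x|=n$; as $N\nor G$ we have $\norm G{\langle x\rangle}=G$, so $\bg G x=G/\cent G x$, and by the remark preceding Definition~\ref{semi} this group acts faithfully on the set $\Omega$ of the $\phi(n)$ generators of $N$. Identifying $\Omega$ with $\mathcal{U}(\Z/n\Z)$ via $x^k\leftrightarrow k$, the automorphisms of $N$ act on $\Omega$ as the regular (multiplication) action of $\mathcal{U}(\Z/n\Z)\cong\aut N$, so $\bg G x$ gets identified with a subgroup $J\le\mathcal{U}(\Z/n\Z)$ acting by multiplication; hence the number of $\bg G x$-orbits on $\Omega$ equals $[\mathcal{U}(\Z/n\Z):J]=\phi(n)/|\bg G x|$. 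Thus it suffices to prove $|\bg G x|\ge\phi(n)/2$, and this then applies verbatim to every generator $x^k$ of $N$, because $\langle x^k\rangle=N$ forces $\bg G{x^k}=\bg G x$ with the same action on $\Omega$.

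Next, pick a faithful linear character $\lambda\in\irr N$ (it exists because $N$ is cyclic), so that $\Q(\lambda)=\Q_n$. I would record two facts. First, $I_\lambda=\cent G x$: the condition $\lambda^g=\lambda$ amounts to $\lambda(gyg^{-1}y^{-1})=1$ for all $y\in N$, which — $\lambda$ being faithful — is equivalent to $g\in\cent G N=\cent G x$. Second, $I^*_\lambda=G$: conjugation by any $g\in G$ induces a power automorphism $y\mapsto y^{j}$ of the cyclic group $N$, and then $\lambda^g=\lambda^{\sigma_j}$, where $\sigma_j\in\Gal(\Q_n/\Q)=\Gal(\Q(\lambda)/\Q)$ raises roots of unity to the $j$-th power (indeed $\lambda^g$ and $\lambda^{\sigma_j}$ are linear characters of $N$ agreeing on the generator $x$). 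Feeding these into Lemma~\ref{lemmant} gives $\bg G x=I^*_\lambda/I_\lambda\cong\Gal(\Q(\lambda)/\Q(\lambda^G))=\Gal(\Q_n/\Q(\lambda^G))$, so that $|\bg G x|=\phi(n)/|\Q(\lambda^G):\Q|$. Since $\lambda\in\irr N$ and $N$ is quadratic rational in $G$, we have $|\Q(\lambda^G):\Q|\le2$, hence $|\bg G x|\ge\phi(n)/2$, as required.

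The only genuinely delicate point is the identity $I^*_\lambda=G$: this is exactly where cyclicity of $N$ is used in an essential way, since it forces $\aut N$ to be abelian and makes every conjugation automorphism of $N$ realizable by an element of $\Gal(\Q_n/\Q)$. (If one prefers, it is enough to note that $I^*_\lambda/I_\lambda$ embeds into $\bg G x$, which already yields the inequality $|\bg G x|\ge|\Gal(\Q_n/\Q(\lambda^G))|=\phi(n)/|\Q(\lambda^G):\Q|$.) Everything else is routine bookkeeping with Clifford theory and with the regular action of $\mathcal{U}(\Z/n\Z)$ on primitive $n$-th roots of unity, so I do not anticipate any obstacle beyond keeping the conventions for $\lambda^g$ straight.
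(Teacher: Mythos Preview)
Your argument is correct and follows essentially the same route as the paper: pick a faithful $\lambda\in\irr N$ so that $\Q(\lambda)=\Q_n$, observe $I^*_\lambda=G$ by cyclicity of $N$, apply Lemma~\ref{lemmant} together with the bound $|\Q(\lambda^G):\Q|\le 2$ from Proposition~\ref{normalinqr}, and deduce $|\bg G x|\ge\phi(n)/2$. The only cosmetic difference is that you establish the equality $I_\lambda=\cent G x$, whereas the paper uses only the inclusion $\cent G x\subseteq I_\lambda$ (which already suffices for the divisibility argument).
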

\begin{proof}

Let $y$ be a generator of $N$, and set $m=|y|$ (we can clearly assume $m>2$). Since the dual group $\irr N$ is cyclic of order $m$, there exists $\theta\in\irr N$ such that $\Q(\theta)=\Q_{m}$. Observe that, for every $g\in G$, $\theta^g$ lies in the subgroup of $\irr N$ generated by $\theta$, so there exists an integer $k$ coprime to $m$ such that $\theta^g=\theta^k$. In other words, we have $I^*_\theta=G$. Moreover, by Proposition \ref{normalinqr} we have $|\Q(\theta^G):\Q|\leq 2$, hence $\frac{\phi(m)}{2}$ divides $|\Q_m:\Q(\theta^G)|$ which is the same as $|I_\theta^*/I_\theta|=|G/I_\theta|$ by Lemma \ref{lemmant}. Since $\cent G y\subseteq I_\theta$, we see that $\frac{\phi(m)}{2}$ divides $|G/\cent G y|=|\bg G y|=|\Q_m:\Q(y)|$, so $|\Q(y):\Q|\leq 2$ as wanted.
\end{proof}

\begin{rem}\label{semiratorder}
 The existence of a semi-rational element in a group $G$ yields some information on the structure of $G$, in particular, on the existence of elements in $G$ having certain orders. This follows from the observation that, if $x\in G$ is a semi-rational element, then the section $\bg G x$ of $G$ is isomorphic to a subgroup of index at most $2$ of $\aut{\langle x\rangle}$: in fact, $\bg G x\cong \Gal(\Q_{|x|}/\Q(x))\subseteq\Gal(\Q_{|x|}/\Q)\cong\aut{\langle x\rangle}$, and $|\Q(x):\Q|\leq 2$.
 
 For instance, assume that $G$ is a quadratic rational group \emph{of odd order}. Then $G$ is inverse semi-rational by Proposition~\ref{oddequiv} and, since it is solvable, Theorem~1.2 of \cite{bac} yields $\pi(G)\subseteq\{3,5,7\}$. But if $5\in\pi(G)$ then, considering a (semi-rational) element $x\in G$, in view of the paragraph above we should have a section of $G$ having order $2$, clearly a contradiction.  Also, if $y\in G$ is an element of order $21$, then $G$ should have a section isomorphic to a subgroup of order $6$ of $\aut{\langle y\rangle}\cong C_2\times C_6$, and again we get the contradiction that $G$ has an element of order $6$; for similar reasons, $G$ cannot be a $7$-group. To sum up, $G$ is a $\{3,7\}$-group (but not a $7$-group) with no elements of order $21$; note that if $\pi(G)=\{3,7\}$ then, by a well-known result of K. Gruenberg and O. Kegel, $G$ is either a Frobenius or a $2$-Frobenius group (see \cite[Theorem~2.4]{gkdelrio}), and both cases can actually occur in view of Proposition~5.2 and Proposition~5.3 of \cite{gkdelrio}. 
     
Assume now that $G$ as above has exponent $21$ (this happens for instance if $G$ is a Frobenius group, as will be shown in (2)(b) of the Main Theorem). Working with elements $x$ and $y$ of $G$ having orders $3$ and $7$ respectively, and taking into account that $\bg G x$ and $\bg G y$ have orders $1$ and $3$ respectively, it is not difficult to see that any integer $j$ such that $x^j\in x^G$ and $y^j\in y^G$ must be congruent to $1$ modulo $3$ and congruent to $1$, $2$ or $4$ modulo $7$; in other words, $j$ must be congruent to $1$, $4$ or $16$ modulo $21$. Actually, it can be easily seen that $\rg_G=\langle 4 \rangle$ in ${\mathcal{U}}(\Z/21\Z)$ and hence, $G$ being inverse semi-rational, we also get  $\sg_G=-1\cdot \langle 4 \rangle$. 
\end{rem}

\section{Complements of quadratic rational Frobenius groups}

In this section we provide a list of groups that possibly arise as Frobenius complements of quadratic rational Frobenius groups (see Theorem~\ref{qrcompl}, dealing with the solvable case, and Proposition~\ref{nonsolvablefrobcompl} for the non-solvable case). As we will see in proving the Main Theorem, every group in this list except ${\rm{SL}}_2(5).C_2$ is indeed a Frobenius complement of some quadratic rational Frobenius group. We start by gathering some general facts concerning the structure of Frobenius groups.

\begin{lemma}\label{frobeniuscomplementsylows}
    Let $H$ be a Frobenius complement and $p,q\in \pi(H)$. Then the following conclusions hold.
    \begin{enumerate}
        \item Every subgroup of $H$ of order $pq$ is cyclic.
        \item If $|H|$ is even then $H$ has a unique involution, which is therefore central.
        \item The Sylow $2$-subgroups of $H$ are either cyclic or generalized quaternion, and the Sylow $p$-subgroups of $H$ with $p\neq 2$ are cyclic.
    \end{enumerate}
\end{lemma}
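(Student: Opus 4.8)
The plan is to base everything on the classical fact that a Frobenius complement $H$ possesses a faithful \emph{fixed-point-free} complex representation $\rho$, meaning that $1$ is not an eigenvalue of $\rho(h)$ for any $h\in H\setminus\{1\}$; this is obtained by letting $H$ act on a suitable elementary abelian chief factor of its Frobenius kernel and lifting the resulting fixed-point-free representation from the (necessarily coprime) characteristic to characteristic zero, a standard argument in the structure theory of Frobenius groups. The key observation is that every subgroup $B\leq H$ is again a Frobenius complement, equivalently inherits such a representation by restriction; so it is enough to determine which groups of the relevant shapes can carry a faithful fixed-point-free complex representation and then to read off the conclusions.

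First I would handle (1). Let $B\leq H$ have order $pq$ and restrict $\rho$ to $B$. For $\rho|_B$ to be fixed-point-free, every irreducible constituent $\lambda$ must have trivial kernel, since otherwise a nontrivial element of $\ker\lambda$ would act with eigenvalue $1$ on the corresponding summand. If $p=q$ then $B$ is abelian of order $p^2$; but a nontrivial linear character of $C_p\times C_p$ has kernel of order $p$, so $B$ cannot be $C_p\times C_p$ and must be cyclic. If $p\neq q$ and $B$ is not cyclic, then $B$ is a nonabelian group of the form $C_q\rtimes C_p$; its linear characters contain the (normal, order $q$) subgroup $C_q$ in their kernel, while each of its nonlinear irreducible characters is induced from $C_q$ and hence, by Mackey's formula, restricts to any complement $C_p$ as the regular representation of $C_p$, which contains the trivial character. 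In every case $\rho|_B$ would have a nontrivial element acting with eigenvalue $1$, a contradiction; hence $B$ is cyclic.

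Next, for (2), suppose $|H|$ is even and let $t$ be an involution of $H$. Since $t\neq 1$, the eigenvalues of $\rho(t)$ are all $-1$, so $\rho(t)=-I$; as $\rho$ is faithful this shows $t$ is the unique involution of $H$, and since $\rho(hth^{-1})=\rho(h)(-I)\rho(h)^{-1}=-I=\rho(t)$ for all $h\in H$, the element $t$ is central. (One can also argue classically: an involution in $H$ acts fixed-point-freely on the Frobenius kernel $K$, hence inverts $K$; two involutions $t,t'$ would then have $tt'$ centralizing $K$, forcing $tt'=1$.) Finally, (3) follows from (1): if $P$ is a Sylow $p$-subgroup of $H$, the case $p=q$ of (1) shows every subgroup of $P$ of order $p^2$ is cyclic, so taking an order-$p$ subgroup $W$ of $Z(P)$, any order-$p$ subgroup $Z\neq W$ would yield $WZ\cong C_p\times C_p\leq P$, impossible; thus $P$ has a unique subgroup of order $p$, and by the classical classification of such $p$-groups, $P$ is cyclic if $p$ is odd and cyclic or generalized quaternion if $p=2$.

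The only genuinely non-elementary ingredient here is the existence of the faithful fixed-point-free complex representation of $H$ (together with the facts about chief factors of the Frobenius kernel and coprime lifting on which it rests); given that, parts (1)--(3) are a short sequence of character-theoretic and $p$-group computations, with the case analysis for the nonabelian group of order $pq$ in (1) being the fiddliest point.
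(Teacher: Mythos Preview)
Your argument is correct and is essentially the classical proof (via the existence of a faithful fixed-point-free complex representation of $H$) that one finds in standard references. The paper itself does not give a proof at all: it simply cites Theorem~18.1 of Passman's \emph{Permutation Groups}, where precisely this line of reasoning is carried out. So you have supplied, in self-contained form, what the paper outsources to the literature; the two are not in tension, and your version has the advantage of being readable without chasing the citation.
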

\begin{proof}
    See Theorem 18.1 of \cite{passpermgroup}.
\end{proof}

\begin{lemma}\label{passman18.3}
    Let $H$ be a solvable Frobenius complement such that $\zent H$ is a non-trivial $2$-group. Then the following conclusions hold.
\begin{enumerate}
    \item Let $p>2$ and $P\in \syl p H$; then $P\subseteq H'$ unless $|P|=3$ and the Sylow $2$-subgroups of $H$ are isomorphic to $Q_8$.
    \item The Hall $2'$-subgroups of $H'$ are cyclic.
    \item Let $M$ be a Hall $2'$-subgroup of $H$. If $\oh 2 H\not \cong Q_8$ then $M\normal H$. Otherwise either $M\normal H$ or there exists $T\leq M$ such that $[M:T]=3$ and $T\normal H$.
\end{enumerate}
\end{lemma}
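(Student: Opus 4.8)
This is a piece of the classical structure theory of solvable Frobenius complements, and the natural source is Passman~\cite[\S 18]{passpermgroup}; below I indicate how I would argue the three parts. Since $1\neq\zent H$ is a $2$-group, $|H|$ is even, so by Lemma~\ref{frobeniuscomplementsylows} a Sylow $2$-subgroup $P$ of $H$ is cyclic or generalized quaternion, the Sylow $q$-subgroups for odd $q$ are cyclic, and $H$ has a unique involution $z\in\zent H$. Set $\overline H=H/\oh 2 H$. I would first record two structural facts: (i) $\oh 2 H$ (being contained in $P$) is cyclic or generalized quaternion; and (ii) $\overline H$ is metabelian with $\overline H'$ cyclic of odd order. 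For (ii), note $\oh 2{\overline H}=1$, so $F:=\fit{\overline H}$ is the direct product over odd primes $q$ of the groups $\oh q{\overline H}$, each lying inside a Sylow $q$-subgroup of $\overline H$ --- which is isomorphic to the corresponding (cyclic) Sylow $q$-subgroup of $H$; hence $F$ is cyclic of odd order. Solvability gives $\cent{\overline H}F\le F$, and as $F$ is abelian this forces $\cent{\overline H}F=F$, so $\overline H/F\hookrightarrow\aut F$ is abelian and $\overline H'\le F$.

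Conclusions (2) and (3) then follow quickly. For (2): a Hall $2'$-subgroup $N$ of $H'$ meets the $2$-group $H'\cap\oh 2 H$ trivially, hence embeds into $H'/(H'\cap\oh 2 H)\cong\overline H'$, which is cyclic; so $N$ is cyclic. For (3): if $M$ is a Hall $2'$-subgroup of $H$, then $M\oh 2 H/\oh 2 H$ contains $F$ with abelian quotient inside $\overline H/F$, hence is normal in $\overline H$; so $L:=M\oh 2 H\nor H$ and $L=\oh 2 H\rtimes M$. Since $\aut{\oh 2 H}$ is a $2$-group when $\oh 2 H$ is cyclic or $\cong Q_{2^n}$ with $n\ge 4$, in those cases $M$ centralizes $\oh 2 H$, so $M=\oh{2'}L$ is characteristic in $L\nor H$ and $M\nor H$. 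If $\oh 2 H\cong Q_8$, one checks that $\cent H{\oh 2 H}$ has Sylow $2$-subgroup $\langle z\rangle$, hence a normal $2$-complement $T:=\oh{2'}{\cent H{\oh 2 H}}$, which is normal in $H$ and contained in $M$; now $M/T$ embeds in $\aut{Q_8}$ with odd order, so $|M/T|\in\{1,3\}$, yielding either $M\nor H$ or the stated alternative.

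For (1), fix an odd prime $p\in\pi(H)$ and $R\in\syl p H$, which is cyclic. A transfer/fusion argument (focal subgroup theorem together with Burnside's fusion theorem) gives $R\cap H'=[R,\norm H R]$; since $\norm H R/\cent H R$ has order prime to $p$ and $R$ is cyclic, it follows that $R\subseteq H'$ if and only if $\norm H R\neq\cent H R$, which by Burnside's normal $p$-complement theorem holds if and only if $H$ has no normal $p$-complement. Hence (1) is equivalent to: \emph{if $H$ admits a normal $p$-complement $K$ with $p$ odd, then $p=3$ and $P\cong Q_8$}. In this situation $z\in K$, $R$ centralizes $z$, $R$ acts nontrivially on $K$ (otherwise $R\le\zent H$, contradicting that $\zent H$ is a $2$-group), and $\oh 2 K=\oh 2 H$. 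The decisive point is to show that $K$ must be a $2$-group; granting that, $K=P$ is a cyclic or generalized quaternion $2$-group carrying an automorphism of odd order $p$, which forces $K\cong Q_8$ and $p=3$.

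The main obstacle is exactly that last step --- that a normal $p$-complement $K$ ($p$ odd) is a $2$-group. Ruling out an odd prime $q\neq p$ dividing $|K|$ seems to need the fine structure of solvable Frobenius complements: analysing the action of $R$ on the cyclic group $\oh q K$, one expects to produce either a nontrivial central $p$-element (contradicting the hypothesis on $\zent H$) or a non-cyclic subgroup of order $pq$ (contradicting Lemma~\ref{frobeniuscomplementsylows}(1)). For a clean treatment I would appeal to Zassenhaus's classification as developed in \cite[\S 18]{passpermgroup}; in fact all three conclusions can be read off directly from that classification by going through the finitely many possible structures.
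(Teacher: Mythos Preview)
The paper does not prove this lemma at all: its entire proof is the single line ``See Lemma~18.3 of \cite{passpermgroup}.'' So there is nothing to compare against beyond the citation, and your final sentence (that all three conclusions can be read off the Zassenhaus--Passman classification in \cite[\S18]{passpermgroup}) already matches the paper's approach exactly.

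That said, your proposal goes well beyond the paper by sketching direct arguments, and it is worth recording how far these succeed. Your reduction (ii) that $\overline H=H/\oh 2 H$ has cyclic Fitting subgroup $F$ of odd order with $\overline H'\le F$ is clean and correct, and (2) follows immediately. For (3), your argument is essentially complete; the one step you leave implicit is why $\overline M=M\oh 2 H/\oh 2 H$ is normal in $\overline H$: it is the preimage of the (characteristic) Hall $2'$-subgroup of the abelian group $\overline H/F$. The treatment of the $Q_8$ case via $T=\oh{2'}{\cent H{\oh 2 H}}$ is nice and correct.

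For (1), your transfer reduction (via the focal subgroup theorem and Burnside fusion) to the statement ``if $H$ has a normal $p$-complement $K$ with $p$ odd, then $K$ is a $2$-group'' is correct, and the endgame ($K$ cyclic or generalized quaternion with an odd-order automorphism forces $K\cong Q_8$, $p=3$) is fine. You are right that the remaining step---ruling out an odd prime $q\ne p$ dividing $|K|$---is where the real content lies; your diagnosis of how the contradiction should arise (either a central $p$-element or a non-cyclic subgroup of order $pq$) is the right intuition, and deferring to \cite[\S18]{passpermgroup} for it is exactly what the paper does for the whole lemma. So your proposal is correct, and in fact strictly more informative than the paper's proof.
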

\begin{proof}
    See Lemma $18.3$ \cite{passpermgroup}.
\end{proof}

Next, we focus more specifically on the context of quadratic rational groups. In particular, in Proposition~\ref{sylowqrfrcmpl} (that is refined in Proposition~\ref{notQ32}) the possible structure of Sylow subgroups of a quadratic rational Frobenius complement is determined. This should be compared with Lemma~{2.5} of \cite{semiratfrobenius}.

\begin{proposition}\label{qrfrobthm}
    Let $G$ be a Frobenius group with complement $H$ and kernel $K$. Then $G$ is quadratic rational if and only if the following properties hold:
    \begin{enumerate}
        \item H is quadratic rational.
        \item K is quadratic rational in G.
    \end{enumerate}
\end{proposition}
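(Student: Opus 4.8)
The plan is to prove the two directions separately, using Clifford theory for the Frobenius semidirect product $G=K\rtimes H$ together with the structural feature that every nonprincipal $\theta\in\irr K$ has inertia subgroup contained in $K$ itself (this is the defining property of a Frobenius group: $H$ acts without nontrivial fixed points on $K$, hence on $\irr K$ as well, so $I_G(\theta)=K$ for $\theta\neq 1_K$). This splits $\irr G$ into two pieces: the characters of $G/K\cong H$ inflated to $G$, and the characters $\theta^G$ induced from nonprincipal $\theta\in\irr K$ (which are irreducible precisely because $I_G(\theta)=K$).

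For the ``only if'' direction, assume $G$ is quadratic rational. Condition (1) is immediate from Proposition~\ref{qrfacts}(1) applied to the normal subgroup $K$, since $G/K\cong H$. Condition (2) is exactly the content of Proposition~\ref{normalinqr}(2): $K$ is quadratic rational in $G$ because $G$ is quadratic rational. So this direction is essentially free given the earlier results.

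For the ``if'' direction, assume (1) and (2) and take an arbitrary $\chi\in\irr G$; I must show $|\Q(\chi):\Q|\leq 2$. If $K\leq\ker\chi$, then $\chi$ is (inflated from) an irreducible character of $G/K\cong H$, and by (1) we have $|\Q(\chi):\Q|\leq 2$. Otherwise $\chi$ lies over some nonprincipal $\theta\in\irr K$; since $I_G(\theta)=K$, Clifford theory gives $\chi=\theta^G$, and $\chi\in\irr{G|\theta}$. Now I invoke Proposition~\ref{normalinqr}(1): since $\chi=\theta^G$ we actually have $\Q(\chi)=\Q(\theta^G)$ (the inclusion $\Q(\theta^G)\subseteq\Q(\chi)$ is given there, and the reverse is trivial here as $\chi$ \emph{is} $\theta^G$). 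By hypothesis (2), $K$ is quadratic rational in $G$, which means precisely $|\Q(\theta^G):\Q|\leq 2$ for every $\theta\in\irr K$. Hence $|\Q(\chi):\Q|\leq 2$, and $G$ is quadratic rational.

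The argument is short, and the only point requiring care is the justification that every $\chi\in\irr G$ not containing $K$ in its kernel is induced from $K$ — i.e., that $I_G(\theta)=K$ for nonprincipal $\theta\in\irr K$. This is the standard fact that a Frobenius complement acts fixed-point-freely on the nonidentity elements of the kernel and correspondingly on $\irr K\setminus\{1_K\}$; I would state it with a one-line reference (e.g. to Isaacs or to the Frobenius-group structure used throughout) rather than reprove it. Everything else is a direct application of Propositions~\ref{qrfacts} and~\ref{normalinqr}, so I do not expect a genuine obstacle here; the main ``work'' is simply organizing the $\irr G$-dichotomy cleanly.
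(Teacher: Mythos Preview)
Your proof is correct and follows essentially the same approach as the paper. The paper handles the ``only if'' direction identically (via $H\cong G/K$ and Proposition~\ref{normalinqr}), and for the ``if'' direction simply cites \cite[Theorem~18.7]{huppertcharacters}, which is precisely the dichotomy $\irr G=\{\text{inflations from }H\}\cup\{\theta^G:1_K\neq\theta\in\irr K\}$ that you spell out explicitly via Clifford theory and the fixed-point-free action; your version is a self-contained unpacking of that citation.
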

\begin{proof}
    The ``only if" part of the statement easily follows from the observation that $H$ is a factor group of $G$, and by Proposition~\ref{normalinqr} (or simply by the fact that every irreducible character of $K$ induces irreducibly to $G$). As for the other implication, this follows from \cite[Theorem 18.7]{huppertcharacters}.
\end{proof}

\begin{proposition}\label{sylowqrfrcmpl}
    Let $H$ be a quadratic rational Frobenius complement. Then the following conclusions hold.
    \begin{enumerate}
        \item For $|H|$ even, a Sylow $2$-subgroup of $H$ is isomorphic to one among the groups $C_2$, $C_4$, $Q_8$,   $Q_{16}$, $Q_{32}$.
        \item A Sylow $p$-subgroup of $H$ for any odd prime divisor $p$ of $|H|$ is isomorphic to $C_p$.
    \end{enumerate}
\end{proposition}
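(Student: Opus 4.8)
The plan is to reduce both assertions to bounding the \emph{orders} of the relevant Sylow subgroups, since by Lemma~\ref{frobeniuscomplementsylows}(3) their isomorphism types are already restricted (cyclic for an odd prime; cyclic or generalized quaternion for $p=2$). The engine is Lemma~\ref{stimeqr}, applied to a well-chosen element $x$: it turns an \emph{upper} bound on $|\bg{H}{x}|$ into a divisibility constraint on $\phi(|x|)$, hence into an upper bound on $|x|$. So in each case the real work is to bound $|\bg{H}{x}|$ from above, which I would do by exploiting that $\gen{x}$ sits inside a Sylow subgroup of $H$ in a controlled way, together with the fact that $\bg{H}{x}$ embeds into $\aut{\gen{x}}$.

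For (2): let $p$ be an odd prime and $P=\gen{x}\in\syl{p}{H}$, cyclic of order $p^n$. A standard argument shows that $\bg{H}{x}=\norm{H}{P}/\cent{H}{x}$ is a $p'$-group — a nontrivial $p$-element of $\norm{H}{P}$ acting nontrivially on $P$ would, together with $P$, generate a $p$-subgroup properly containing the Sylow subgroup $P$ — and since $\bg{H}{x}$ embeds in $\aut{C_{p^n}}$, a cyclic group of order $p^{n-1}(p-1)$, its order divides $p-1$. As $|x|=p^n$ is a prime power, its $2$-part is $1$, so Lemma~\ref{stimeqr} gives that $\phi(p^n)=p^{n-1}(p-1)$ divides $2^{|\pi(x)|}\cdot|\bg{H}{x}|=2\,|\bg{H}{x}|$, hence divides $2(p-1)$; thus $p^{n-1}\mid 2$ and, $p$ being odd, $n=1$.

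For (1): let $|H|$ be even, $S\in\syl{2}{H}$ with $|S|=2^m$, and pick a cyclic subgroup $\gen{x}\le S$ of maximal order, so $\gen{x}=S$ if $S$ is cyclic, while $\gen{x}$ has index $2$ (hence is normal) in $S$ and $|x|=2^{m-1}$ if $S$ is generalized quaternion. In either case $\bg{H}{x}$ is a $2$-group (it embeds in $\aut{\gen{x}}$), so $|\bg{H}{x}|=|\bg{H}{x}|_2=|\norm{H}{\gen{x}}|_2/|\cent{H}{x}|_2$ (using $\cent{H}{x}\trianglelefteq\norm{H}{\gen{x}}$), and $S\le\norm{H}{\gen{x}}$ with $S$ Sylow forces $|\norm{H}{\gen{x}}|_2=2^m$. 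If $S$ is cyclic then $S=\gen{x}\le\cent{H}{x}$, so $|\cent{H}{x}|_2=2^m$, $\bg{H}{x}=1$, and $\Q(x)=\Q_{2^m}$; quadratic rationality forces $2^{m-1}=|\Q_{2^m}:\Q|\le 2$, i.e.\ $m\le 2$ (and $m\ge 1$ as $|H|$ is even), so $S\in\{C_2,C_4\}$. If $S\cong Q_{2^m}$ (so $m\ge 3$), then $\gen{x}\le\cent{H}{x}$ gives $|\cent{H}{x}|_2\ge 2^{m-1}$, while a Sylow $2$-subgroup $T$ of $\cent{H}{x}$ containing $\gen{x}$ cannot have order $2^m$: it would then be a Sylow $2$-subgroup of $H$, hence $T\cong Q_{2^m}$ with $\gen{x}$ its cyclic maximal subgroup, which is self-centralizing in $Q_{2^m}$ — contradicting $\gen{x}\le T\le\cent{H}{x}$. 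So $|\cent{H}{x}|_2=2^{m-1}$ and $|\bg{H}{x}|=2$; for $m\ge 4$ the $2$-part $2^{m-1}$ of $|x|$ does not divide $4$, so Lemma~\ref{stimeqr} gives that $2^{m-2}=\phi(2^{m-1})$ divides $2^{|\pi(x)|+1}\cdot|\bg{H}{x}|=2^{2}\cdot 2=8$, whence $m\le 5$. Therefore $S$ is one of $Q_8$, $Q_{16}$, $Q_{32}$, and together with the cyclic case this gives (1).

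I expect the delicate point to be the equality $|\bg{H}{x}|=2$ in the generalized quaternion case: it combines an internal feature of $Q_{2^m}$ (the index-$2$ cyclic subgroup is self-centralizing) with the global fact that $S$ is a \emph{full} Sylow $2$-subgroup of $H$, so that no larger $2$-subgroup of $H$ can centralize $x$; the odd-prime and cyclic-$2$ cases are by comparison formal. Finally, the bound $m\le 5$ obtained here is not best possible — reading $|\Q(x):\Q|=\phi(2^{m-1})/|\bg{H}{x}|=2^{m-3}$ directly already forces $m\le 4$ — and this improvement, excluding $Q_{32}$, is exactly what Proposition~\ref{notQ32} will take up.
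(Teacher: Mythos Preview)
Your argument for (2) and for the generalized quaternion part of (1) is correct and essentially matches the paper's. The paper phrases (2) via Lemma~\ref{semiratpel}, but that lemma is itself an immediate consequence of Lemma~\ref{stimeqr}, so the substance is the same; your explicit computation $|\bg H x|=2$ in the quaternion case is a slightly sharper version of the paper's inequality $2^{n+1}\ge |\norm H{\langle x\rangle}|_2\ge 2^{2n-3}$, but both yield the same bound.

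There is, however, a genuine gap in the cyclic Sylow $2$-subgroup case. You assert that ``quadratic rationality forces $|\Q_{2^m}:\Q|\le 2$'', but quadratic rationality is a hypothesis on \emph{characters}: it says $|\Q(\chi):\Q|\le 2$ for every $\chi\in\irr H$, and this does \emph{not} imply $|\Q(x):\Q|\le 2$ for elements $x$ (the paper itself exhibits quadratic rational $2$-groups that are not semi-rational, e.g.\ \texttt{SmallGroup(32,9)}). The only tool available here is Lemma~\ref{stimeqr}, and with $\bg H x=1$ and $|x|=2^m$ (for $m\ge 3$) it gives merely $2^{m-1}=\phi(2^m)\mid 2^{|\pi(x)|+1}\cdot 1=4$, i.e.\ $m\le 3$; so your argument as written fails to exclude $C_8$. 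The paper handles the cyclic case by a different route: a cyclic Sylow $2$-subgroup forces a normal $2$-complement $N$ (Burnside), whence $S\cong H/N$ is itself a quadratic rational \emph{abelian} group, and Proposition~\ref{qrfacts}(2) then yields $\exp(S)\mid 4$. Your closing remark that reading $|\Q(x):\Q|=2^{m-3}$ ``directly already forces $m\le 4$'' rests on the same unjustified inequality $|\Q(x):\Q|\le 2$; Lemma~\ref{qrfields} only gives $|\Q(x):\Q|\le 4$ in this range, recovering $m\le 5$ and nothing more, which is exactly why the paper needs a separate argument (Proposition~\ref{notQ32}) to rule out $Q_{32}$.
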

    \begin{proof} We start by proving claim (1). Assume first that $P\in \text{Syl}_2(H)$ is abelian, hence cyclic by Lemma~\ref{frobeniuscomplementsylows}. Then $H$ has a normal $2$-complement $N$, and $P\cong H/N$ is therefore quadratic rational. By Proposition~\ref{qrfacts} we have $P\cong C_2 $ or $P\cong C_4$.
            
Suppose now $P\cong Q_{2^{n+1}}$ and assume $n\geq 3$. Take $x\in P$ of order $2^{n}$. Then we have $2^{n+1}\geq |\norm H {\langle x \rangle}|_2\geq 2^{2n-3}$ by Lemma~\ref{stimeqr}, therefore $n\leq 4$.
            
As regards claim (2), by Lemma~\ref{frobeniuscomplementsylows} we have that $P\in \syl p H$ is cyclic. Let $x$ be a generator of $P$ with $|x|=p^m$: Lemma~\ref{semiratpel} yields that $x$ is semi-rational in $H$ and, since $P$ is contained in $\cent G x$, we have that $\bg G x$ is a $p'$-group. But now  $p^{m-1}$ divides $\frac{\phi(|x|)}{2}$ which in turn divides $|\bg G x|$, implying the desired conclusion $m=1$. 
\end{proof}

\begin{lemma}
\label{2centerqr}
Let $H$ be a solvable Frobenius complement such that $\zent H$ is a non-trivial $2$-group. If $p$ is an odd prime and $P\in \syl p H$, then either $P\subseteq H'$ or $p=3$ and $H\cong {\rm {SL}}_2(3)$.

\end{lemma}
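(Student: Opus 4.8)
The plan is to start from the structural dichotomy provided by Lemma~\ref{passman18.3}(1): since $H$ is a solvable Frobenius complement with $\zent H$ a non-trivial $2$-group, for an odd prime $p$ and $P\in\syl p H$ we have $P\subseteq H'$ \emph{unless} $p=3$ and the Sylow $2$-subgroups of $H$ are isomorphic to $Q_8$. So the entire argument reduces to the exceptional case: assume $p=3$, $P=\gen x$ with $|x|=3$ (cyclic of order $3$ by Proposition~\ref{sylowqrfrcmpl}(2)), $P\not\subseteq H'$, and $\oh 2 H\cong Q_8$; the goal is to force $H\cong{\rm SL}_2(3)$.

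First I would pin down the Hall $2'$-structure. By Lemma~\ref{passman18.3}(3), in the $\oh 2 H\cong Q_8$ case either a Hall $2'$-subgroup $M$ is normal in $H$, or there is $T\leq M$ with $[M:T]=3$ and $T\nor H$; and by Lemma~\ref{passman18.3}(2) the Hall $2'$-subgroup of $H'$ is cyclic. Combining this with Proposition~\ref{sylowqrfrcmpl}(2) (every odd Sylow is of prime order) and Theorem~\ref{spectraquadraticrational} (so $\pi(H)\subseteq\{2,3,5,7,13\}$), I would argue that the only odd prime available is $3$ itself: if some other odd prime $q$ divided $|H|$, then since $P\not\subseteq H'$ we would get (via the first part of Lemma~\ref{passman18.3}) that a Sylow $q$-subgroup lies in $H'$, and then the cyclic Hall $2'$-subgroup of $H'$ would contain an element of order $3q$ (using that $3\mid|H'|$ too, because $P\not\subseteq H'$ forces $|H:H'|$ to be even but $H/H'$ can still pick up the factor $3$ only through $M/T$ — here one has to be a little careful), contradicting Proposition~\ref{sylowqrfrcmpl}(2) or Lemma~\ref{frobeniuscomplementsylows}(1). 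So $H$ is a $\{2,3\}$-group with Sylow $3$-subgroup $C_3$ and Sylow $2$-subgroup $Q_8$, whence $|H|=24$.

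Next I would identify $H$ among the groups of order $24$ that are Frobenius complements with center a non-trivial $2$-group and with $\oh 2 H\cong Q_8$: by Lemma~\ref{frobeniuscomplementsylows}(2) the unique involution is central, $\oh 2 H=Q_8\nor H$, and $H/\oh 2 H\cong C_3$ acts on $Q_8$; since the action must be fixed-point-free-ish enough to keep $H$ a Frobenius complement (every subgroup of order $3\cdot 2=6$ cyclic, etc.), the only possibility is the nontrivial action, giving $H\cong Q_8\rtimes C_3\cong{\rm SL}_2(3)$; the split direct product $Q_8\times C_3$ is excluded because then $P\subseteq\zent H\subseteq$ would not meet the requirement (actually $Q_8\times C_3$ has $P$ central, hence $P\not\subseteq H'=Q_8$, which is consistent, so here one must instead rule it out by checking $Q_8\times C_3$ is not quadratic rational — it has an element of order $12$, so by Proposition~\ref{qrfacts}(2) applied to a cyclic subgroup, or directly, it fails; alternatively $C_3\times Q_8$ does appear in Theorem~A, so I'd instead observe that in $C_3 \times Q_8$ one still has $P \not\subseteq H'$ and $\oh 2 H \cong Q_8$, meaning the lemma's conclusion "$H \cong {\rm SL}_2(3)$" would be \emph{false} — so I must re-examine: the resolution is that $\zent{C_3\times Q_8}\cong C_6$ is \emph{not} a $2$-group, so the hypothesis "$\zent H$ is a non-trivial $2$-group" excludes it). Thus ${\rm SL}_2(3)$ remains, and one checks directly that $H\cong{\rm SL}_2(3)$ is quadratic rational with $\zent H\cong C_2$ and $\oh 2 H\cong Q_8$, and that for it indeed $P\not\subseteq H'={\rm SL}_2(3)$? — no: $H'={\rm SL}_2(3)$ itself, so actually $P\subseteq H'$; the point is merely that ${\rm SL}_2(3)$ is the single group realizing the exceptional branch, so the stated disjunction holds.

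The main obstacle I anticipate is the bookkeeping in the middle step: correctly using the three parts of Lemma~\ref{passman18.3} together with the prime-order constraint on odd Sylows to eliminate all odd primes other than $3$, while keeping straight whether $3$ itself lies in $H'$ — the hypothesis $P\not\subseteq H'$ is about the \emph{Sylow $3$-subgroup} but the normal subgroup $T$ of index $3$ in $M$ can absorb other primes, so the case analysis on $M\nor H$ versus $[M:T]=3$, $T\nor H$ needs to be done carefully. Once the order is nailed down to $24$ with the prescribed Sylow structure and central involution, the identification with ${\rm SL}_2(3)$ is routine (it is the unique such Frobenius complement), and the role of the hypothesis "$\zent H$ a $2$-group" is precisely to exclude $C_3\times Q_8$.
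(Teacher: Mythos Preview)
Your reduction to the exceptional case via Lemma~\ref{passman18.3}(1) is exactly right, and you correctly pinpoint that the hypothesis ``$\zent H$ is a $2$-group'' is what rules out $C_3\times Q_8$ at the end. However, the heart of your argument --- eliminating odd primes $q\neq 3$ from $\pi(H)$ --- does not go through as written. You want the cyclic Hall $2'$-subgroup of $H'$ to contain an element of order $3q$, but the standing assumption $P\not\subseteq H'$ (with $|P|=3$) forces $3\nmid|H'|$: the Sylow $3$-subgroup of $H$ has prime order, so $P\cap H'=1$. Hence no element of order $3q$ is available in $H'$, and neither Proposition~\ref{sylowqrfrcmpl}(2) nor Lemma~\ref{frobeniuscomplementsylows}(1) yields the contradiction you are aiming for. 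Your parenthetical ``here one has to be a little careful'' is an understatement: the step simply fails.

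The paper proceeds quite differently at this point. Rather than trying to bound $|H|$ directly, it invokes two further structural results from Passman's book: \cite[Lemma~12.10]{passpermgroup} gives a normal $3$-complement $N$, and then \cite[Lemma~18.4]{passpermgroup} yields either $H\cong{\rm SL}_2(3)$ or a normal $2$-complement $M$. In the latter case one sets $T=M\cap N$, observes $H/T\cong C_3\times Q_8$ and that $T$ is cyclic of order dividing $5\cdot 7\cdot 13$, and then uses Lemma~\ref{frobeniuscomplementsylows}(1) to see that $P$ centralizes $T$; since $P$ is already central in the $C_3\times Q_8$ factor, this forces $P\subseteq\zent H$, contradicting the $2$-group hypothesis. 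So the role of the other odd primes is not to produce a forbidden element order, but to feed into a centrality contradiction.

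One small correction to your final paragraph: for $H={\rm SL}_2(3)$ the derived subgroup is $H'=Q_8$, not all of $H$, so $P\not\subseteq H'$ genuinely holds there --- this is precisely the exceptional case the lemma carves out.
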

\begin{proof}
    Observe that, by Lemma \ref{passman18.3}, we have $P\subseteq H'$ unless $|P|=3$ and a Sylow $2$-subgroup $Q$ of $H$ is isomorphic to $Q_8$. 
    In this situation we can apply \cite[Lemma 12.10]{passpermgroup}, obtaining that $H$ has a normal $3$-complement $N$. Now we apply \cite[Lemma 18.4]{passpermgroup} and it is not difficult to check that, among the possible structures of $H$ given by that lemma, we are left with two cases: either $H\cong {\rm SL_2(3)}$ as we want, or $H$ has a normal $2$-complement $M$ and $Q/Q'$ acts non-trivially on each Sylow subgroup of $M$.
    We only have to show the latter possibility yields a contradiction. 
    In fact, setting $T=M\cap N\normal H$ we have $H/T\cong C_3\times Q_8$ and $H\cong T\rtimes (C_3\times Q_8)$, but this contradicts the fact that $Q/Q'$ acts non-trivially on~$P$.
\end{proof}

\begin{proposition}\label{23qrcenterfrcpl}
    Let $H$ be a solvable quadratic rational Frobenius complement of even order, and assume that $H$ is non-abelian. If $\zent H$ is not a $2$-group, then $H\cong C_3\times Q_8$.
\end{proposition}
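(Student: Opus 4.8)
The plan is to first isolate the prime $3$ and peel off a central direct factor $C_3$, then to exploit the strong constraints that the direct factor $C_3$ imposes on the quadratic rational group $H$ to pin down the complementary Hall $3'$-subgroup.

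\textbf{Step 1: reduce to the Hall $3'$-part.} Since $|H|$ is even, $H$ has a unique (hence central) involution $t$ by Lemma~\ref{frobeniuscomplementsylows}. If $\zent H$ is not a $2$-group, I would pick an odd prime $p\mid|\zent H|$ and an element $u\in\zent H$ of order $p$; then $z=ut$ is central of order $2p$, so $\bg Hz=1$, while $z$ is semi-rational in $H$ by Lemma~\ref{semiratpel}. By Remark~\ref{semiratorder} the section $\bg Hz$ has index at most $2$ in $\aut{\langle z\rangle}$, forcing $\phi(2p)\le 2$ and hence $p=3$. Thus $3$ is the only odd prime dividing $|\zent H|$; as a Sylow $3$-subgroup of $H$ is cyclic of order $3$ by Proposition~\ref{sylowqrfrcmpl}, it equals $\langle u\rangle$ and lies in $\zent H$, so Schur--Zassenhaus and centrality give $H=\langle u\rangle\times H_1$ for a Hall $3'$-subgroup $H_1$. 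Then $H_1\cong H/\langle u\rangle$ is quadratic rational (Proposition~\ref{qrfacts}), it is a Frobenius complement, it is non-abelian (else $H$ would be abelian), it has even order, $3\nmid|H_1|$, and, repeating the argument above inside $H_1$, its center is a nontrivial $2$-group.

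\textbf{Step 2: the rigidity forced by the factor $C_3$.} Every $\chi\in\irr H$ equals $\lambda\otimes\chi_1$ with $\lambda\in\irr{\langle u\rangle}$ and $\chi_1\in\irr{H_1}$; taking $\lambda$ of order $3$ gives $\Q(\lambda\otimes\chi_1)=\Q(\sqrt{-3})\cdot\Q(\chi_1)$, a field which must have degree at most $2$ over $\Q$ because $H$ is quadratic rational. Hence $\Q(\chi_1)\subseteq\Q(\sqrt{-3})$ for \emph{every} $\chi_1\in\irr{H_1}$.

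\textbf{Step 3: eliminate the odd part of $H_1$ and conclude.} Since $\zent{H_1}$ is a nontrivial $2$-group, Lemma~\ref{passman18.3} applies to $H_1$; as $3\nmid|H_1|$, its exceptional alternative cannot occur, so a Hall $2'$-subgroup $M$ of $H_1$ is normal in $H_1$. Being an odd-order Frobenius complement, $M$ has all Sylow subgroups and all subgroups of order $pq$ cyclic (Lemma~\ref{frobeniuscomplementsylows}), so $M$ is cyclic, with $\pi(M)\subseteq\{5,7,13\}$ by Theorem~\ref{spectraquadraticrational} and Proposition~\ref{sylowqrfrcmpl}. Suppose $M\neq 1$ and let $q\in\pi(M)$, so $Q=\langle y\rangle\cong C_q$ is characteristic in $M$, hence $Q\nor H_1$. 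Fix a faithful $\theta\in\irr Q$, so $\Q(\theta)=\Q_q$; since $Q$ is normal, $\theta^{H_1}$ vanishes off $Q$ and takes values in $\Q_q$, and for $\chi_1\in\irr{H_1\mid\theta}$ we get $\Q(\theta^{H_1})\subseteq\Q(\chi_1)\subseteq\Q(\sqrt{-3})$ by Proposition~\ref{normalinqr} and Step~2, so $\Q(\theta^{H_1})\subseteq\Q_q\cap\Q(\sqrt{-3})=\Q$. Thus $\theta^{H_1}$ is rational; since here $I_\theta=\cent{H_1}y$ and $I^*_\theta=H_1$, Lemma~\ref{lemmant} gives $|\bg{H_1}y|=|I^*_\theta/I_\theta|=\phi(q)=q-1$, and since $\bg{H_1}y$ embeds in $\aut{C_q}$ it is cyclic of order $q-1\ge 4$. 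On the other hand $\cent{H_1}y$ contains the normal subgroup $M\langle t\rangle$ ($M$ abelian, $t$ central), and $H_1/M\langle t\rangle\cong S/\langle t\rangle$ for $S\in\syl 2{H_1}$; by Proposition~\ref{sylowqrfrcmpl}, $S\in\{C_2,C_4,Q_8,Q_{16},Q_{32}\}$, and in each case $S/\langle t\rangle$ has elementary abelian abelianization, so all its cyclic quotients have order at most $2$, contradicting $q-1\ge 4$. Hence $M=1$, so $H_1$ is a non-abelian quadratic rational $2$-group; by Proposition~\ref{sylowqrfrcmpl} (read with $H_1$ as its own Sylow $2$-subgroup) together with the fact that $Q_{32}$ is not quadratic rational (it has a degree-$2$ character with value $2\cos(\pi/8)$, of degree $4$ over $\Q$), we get $H_1\cong Q_8$ or $H_1\cong Q_{16}$. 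Finally $Q_{16}$ has an irreducible character with field of values $\Q(\sqrt 2)\not\subseteq\Q(\sqrt{-3})$, so Step~2 excludes it, leaving $H_1\cong Q_8$ and $H\cong C_3\times Q_8$.

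\textbf{Main obstacle.} The crucial point is the rigidity of Step~2: recognizing that the direct factor $C_3$ in the quadratic rational group $H$ confines the field of values of every irreducible character of $H_1$ to $\Q(\sqrt{-3})$. This is what simultaneously forces $H_1$ to be essentially rational (excluding $Q_{16}$) and, through the normal cyclic $q$-subgroup and the semi-inertia machinery of Lemma~\ref{lemmant}, rules out the primes $5,7,13$; the remaining ingredients are routine Galois/Clifford bookkeeping, plus the small character-table facts that $Q_8$ is rational, that $\Q(\chi)=\Q(\sqrt 2)$ for a suitable $\chi\in\irr{Q_{16}}$, and that $Q_{32}$ is not quadratic rational.
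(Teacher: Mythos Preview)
Your proof is correct and follows essentially the same strategy as the paper's: both split off the central $C_3$, observe that the complementary Hall $3'$-subgroup $H_1$ is a quadratic rational Frobenius complement with a normal cyclic $2$-complement $M$ (via Lemma~\ref{passman18.3}), and then use the decisive observation that the direct factor $C_3$ forces $\Q(\chi_1)\subseteq\Q(\sqrt{-3})$ for every $\chi_1\in\irr{H_1}$.

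The only tactical difference is in how the odd part $M$ is eliminated. The paper first invokes Proposition~\ref{normalcycl} to get $\phi(|M|)\mid 2|S|$, which immediately kills $7$ and $13$ and leaves a short list of six groups $C_5^{\epsilon}\rtimes S$; it then uses the constraint $\Q(\chi_1)\subseteq\Q_3$ together with $\gcd(\exp H_1,3)=1$ to conclude that $H_1$ must in fact be \emph{rational}, and checks the list. You instead push the field-of-values constraint through Lemma~\ref{lemmant} to force $\bg{H_1}{y}$ cyclic of order $q-1\ge 4$ for any $q\in\pi(M)$, and rule this out structurally via the elementary abelian abelianization of $S/\langle t\rangle$; this treats $5,7,13$ uniformly at the cost of a slightly longer quotient argument. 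Note that your Step~2 could be sharpened exactly as the paper does: since $\Q(\chi_1)\subseteq\Q_{\exp H_1}$ with $3\nmid\exp H_1$, one gets $\Q(\chi_1)=\Q$ outright, which would let you skip the $S/\langle t\rangle$ analysis and finish by the same short enumeration as in the paper.
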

\begin{proof}
   Let $p\neq 2$ be a prime divisor of $|\zent H|$: then, by Proposition~\ref{sylowqrfrcmpl}, a Sylow $p$-subgroup $P$ of $\zent H$ is in fact a Sylow $p$-subgroup of $H$. Thus $P$ is a direct factor of $H$, hence a factor group of $H$, and it is therefore quadratic rational. This can happen only when $p=3$ by Proposition~\ref{qrfacts}. Since, by Lemma~\ref{frobeniuscomplementsylows}, $H$ has a central involution, we have $\pi(\zent H)=\{2,3\}$. Therefore $H=P\times K$, where $K$ is a solvable quadratic rational Frobenius complement with $3\nmid|K| $, and $\zent K$ is  a $2$-group. 
    
    By Lemma \ref{passman18.3} we have $K=M \rtimes S$, where $S\in \text{Syl}_2(H)$ and $M=\langle x\rangle$ is a cyclic $2$-complement. Since $K$ is quadratic rational, $x$ is semi-rational in $K$ by Proposition \ref{normalcycl}. Therefore, $\phi(|x|)$ divides $2|\bg G x|$ which in turn divides $2|K|/|x|=2|S|$: by Theorem~\ref{spectraquadraticrational} and Proposition~\ref{sylowqrfrcmpl}, this can happen only if $|x|$ divides $5$. 
    Since $K$ is non-abelian and $Q_{32}$ is not quadratic rational, in view of Proposition~\ref{sylowqrfrcmpl} we deduce that $K$ is isomorphic to one of the following groups: $Q_8$, $Q_{16}$, $C_5\rtimes C_2$, $C_5\rtimes C_4$, $C_5\rtimes Q_{8}$, $C_5\rtimes Q_{16}$.
  Observe that the field of values of every irreducible character of $K$ must be a subfield of $\Q_3$, since the characters of $H$ are of the form $\lambda\times \chi$ for $\lambda\in \irr{C_3} $ and $\chi\in \irr K$. But the exponent of $K$ is coprime with $3$, therefore $K$ is a rational group. Our claim now follows from the fact that, as it is not difficult to check, the only rational group in the above list is $Q_8$; for instance, if $K$ is either $C_5\rtimes Q_8$ or $C_5\rtimes Q_{16}$, an element $x$ of order $5$ cannot be rational since $\bg K x=K/\cent K x$ should be a cyclic group of order $4$, which is not the case. 
\end{proof}

\begin{proposition}\label{notQ32}
    Let $H$ be a quadratic rational Frobenius complement. If $P\in\syl 2 H$, then $P$ is not isomorphic to $Q_{32}$. 
\end{proposition}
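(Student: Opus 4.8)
The plan is to argue by contradiction. Suppose $P\in\syl 2 H$ with $P\cong Q_{32}$, and let $x$ generate the unique cyclic subgroup $\langle x\rangle$ of order $16$ of $P$ (it is unique because every element of $P\setminus\langle x\rangle$ has order $4$). Since $\langle x\rangle\unlhd P$, every element of $P\setminus\langle x\rangle$ inverts $x$ and $\cent P x=\langle x\rangle$; the idea is to exploit this to pin down the section $\bg H x$ exactly inside $\aut{\langle x\rangle}\cong\U(\Z/16\Z)$, and then to contradict quadratic rationality through the size of $\Q(x)$.

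First I would note that $P=\norm P{\langle x\rangle}\le\norm H{\langle x\rangle}$, so $|\norm H{\langle x\rangle}|_2=|H|_2=32$, while $\langle x\rangle\le\cent H x$ gives $16\mid|\cent H x|_2$. Hence $|\bg H x|=|\norm H{\langle x\rangle}|/|\cent H x|$ has $2$-part dividing $2$, and since $\bg H x$ embeds in the $2$-group $\aut{C_{16}}\cong C_2\times C_4$ this forces $|\bg H x|\le 2$. On the other hand, the inclusion $P\le\norm H{\langle x\rangle}$ induces an injection $P/\langle x\rangle=\norm P{\langle x\rangle}/\cent P x\hookrightarrow\bg H x$ whose image (by the description of the action of $P$ on $\langle x\rangle$) is the inversion subgroup $\langle-1\rangle$ of $\aut{\langle x\rangle}$; in particular $|\bg H x|\ge 2$. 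Therefore $\bg H x=\langle-1\rangle$, so using $\bg H x\cong\Gal(\Q_{16}/\Q(x))$ (Section~2) we conclude that $\Q(x)$ is the fixed field of complex conjugation in $\Q_{16}$, i.e.\ the maximal real subfield $\Q(\zeta_{16}+\zeta_{16}^{-1})$. This is a cyclic extension of $\Q$ of degree $4$, whose only proper nontrivial subfield is the quadratic field $\Q(\sqrt2)$.

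The contradiction then comes from quadratic rationality: for every $\chi\in\irr H$ the value $\chi(x)$ lies in $\Q(\chi)$, which has degree at most $2$ over $\Q$; hence $\chi(x)$ has degree at most $2$ over $\Q$ and so lies in $\Q(\sqrt2)$ (the only elements of $\Q(x)$ of degree $\le 2$ over $\Q$). Consequently $\Q(x)=\Q(\chi(x)\mid\chi\in\irr H)\subseteq\Q(\sqrt2)$, contradicting $|\Q(x):\Q|=4$. I expect the only delicate part of the write-up to be the first half of the middle paragraph — pinning $\bg H x$ down to exactly $\langle-1\rangle$, in particular verifying $|\cent H x|_2=16$ (equivalently, that a Sylow $2$-subgroup $\cong Q_{32}$ cannot centralize an element of order $16$, which holds since $\zent{Q_{32}}$ has order $2$) and identifying the image of $P/\langle x\rangle$ in $\aut{\langle x\rangle}$ as the inversion subgroup; everything else is the elementary Galois theory of $\Q_{16}$. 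The same computation incidentally explains why the smaller Sylow $2$-subgroups left open by Proposition~\ref{sylowqrfrcmpl} are not excluded: for $Q_{16}$, $Q_8$, $C_4$, $C_2$ the corresponding real subfield has degree at most $2$.
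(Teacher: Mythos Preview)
Your argument is correct, and it takes a genuinely different route from the paper's proof. The paper argues structurally: it observes that $Q_{32}$ has no normal subgroup isomorphic to $Q_8$, then splits into the solvable and non-solvable cases. In the solvable case it invokes Proposition~\ref{23qrcenterfrcpl} and Lemma~\ref{passman18.3} to force a normal $2$-complement, so that $Q_{32}$ would be a quadratic rational quotient of $H$, which it is not; in the non-solvable case it uses the classification of non-solvable Frobenius complements to bound $|H|_2\le 2^4$. Your approach, by contrast, never touches the Frobenius complement structure: you pin down $\bg H x$ exactly as the inversion subgroup of $\aut{C_{16}}$ by a Sylow count (the key point being $|\cent H x|_2=16$, since $\zent{Q_{32}}$ has order $2$), identify $\Q(x)$ as the degree-$4$ cyclic real subfield of $\Q_{16}$, and then observe that quadratic rationality forces every $\chi(x)$ into its unique quadratic subfield $\Q(\sqrt 2)$. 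This is more elementary and self-contained, and in fact proves the stronger statement that \emph{no} quadratic rational group---Frobenius complement or not---can have a Sylow $2$-subgroup isomorphic to $Q_{32}$; the paper's proof genuinely uses the Frobenius hypothesis in both branches. The trade-off is that the paper's argument fits naturally into the surrounding structural analysis (Lemmas~\ref{passman18.3} and \ref{2centerqr}, Proposition~\ref{23qrcenterfrcpl}) already in place, whereas yours is an independent sharpening of the crude estimate in Proposition~\ref{sylowqrfrcmpl}.
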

\begin{proof}
 For a proof by contradiction, assume $P\cong Q_{32}$ and note that this implies $\oh 2 H\not\cong Q_8$,  because $Q_{32}$ does not have any normal subgroup isomorphic to $Q_8$. If $H$ is solvable, then Proposition~\ref{23qrcenterfrcpl} yields that $\zent H$ is a $2$-group, thus we can apply Lemma \ref{passman18.3} and derive that $H$ has a normal $2$-complement $M$. But $P\cong H/M$ is then quadratic rational, whereas $Q_{32}$ is not.
            On the other hand, if $H$ is non-solvable,  then by  \cite[Theorem 18.6]{passpermgroup} there exists a normal subgroup $H_0\normal H$ with $[H:H_0]\leq  2$ and such that $H_0\cong {\rm {SL}}_2(5)\times M$ where $M$ is a group of order coprime to 2, 3, and 5. As a consequence, the $2$ part of $|H|$ is at most $2^4$, yielding the final contradiction.
\end{proof}

We are now in a position to prove part of Theorem~A, stated in the Introduction. Recall that we defined 

$$H_1=\langle x,y| x^5=y^4=1, x^y=x^{-1} \rangle$$  and
$$H_2=\langle x,y,z| x^3=y^8=[x,z]=1, z^2=y^4,  y^z=y^{-1}, x^y=x^{-1}\rangle$$ (note that $H_1\cong \texttt{SmallGroup(20,1)}$ and $H_2\cong \texttt{SmallGroup(48,18)}$).

We will treat the solvable and the non-solvable case separately, in Theorem~\ref{qrcompl} and Proposition~\ref{nonsolvablefrobcompl} respectively.

\begin{theorem}    
\label{qrcompl} Let $G$ be a solvable quadratic rational Frobenius group with Frobenius complement $H$. Then $H$ is isomorphic to one of the groups in the following list: 
        $$C_2,\; C_3,\;C_4,\;C_6,\; Q_8,\; C_3\rtimes C_4,\; Q_{16},\; H_1, \;{\rm{SL}}_2(3),\; C_3\rtimes Q_8,\; C_3\times Q_8\;, H_2,\; {\rm{SL}}_2(3).C_2\,.$$
\end{theorem}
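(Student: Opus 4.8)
The plan is to run a case analysis on $H$ driven by the structural results proved above. Note first that $H$ is itself quadratic rational by Proposition~\ref{qrfrobthm}; that by Proposition~\ref{sylowqrfrcmpl} and Proposition~\ref{notQ32} a Sylow $2$-subgroup $S$ of $H$ is one of $C_2,C_4,Q_8,Q_{16}$ and every Sylow subgroup of odd order has prime order; and that $\pi(H)\sbs\{2,3,5,7,13\}$ by Theorem~\ref{spectraquadraticrational}. I would treat the cases ``$|H|$ odd'', ``$|H|$ even with $H$ abelian'', and ``$|H|$ even with $H$ non-abelian'' in turn.

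If $|H|$ is odd, then $H$ is an odd-order solvable quadratic rational group, so by Remark~\ref{semiratorder} it is a $\{3,7\}$-group, is not a $7$-group, and has no element of order $21$; but if $7\mid|H|$ then also $3\mid|H|$, and a Hall $\{3,7\}$-subgroup of $H$ has order $21$, hence is cyclic by Lemma~\ref{frobeniuscomplementsylows}(1) and contains an element of order $21$, a contradiction. So $\pi(H)=\{3\}$ and $H\cong C_3$. If $|H|$ is even and $H$ is abelian, then $H$ is cyclic by Lemma~\ref{frobeniuscomplementsylows}(3), and Proposition~\ref{qrfacts}(2) forces $H\cong C_2$, $C_4$, or $C_6$.

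Now suppose $|H|$ is even and $H$ is non-abelian. If $\zent H$ is not a $2$-group then $H\cong C_3\times Q_8$ by Proposition~\ref{23qrcenterfrcpl}, so assume $\zent H$ is a non-trivial $2$-group; and if $H\cong{\rm SL}_2(3)$ we are done, so assume otherwise. Then by Lemma~\ref{2centerqr} every odd-order Sylow subgroup of $H$ lies in $H'$, so a Hall $2'$-subgroup $M$ of $H$ is contained in $H'$, where it is again a Hall $2'$-subgroup, and hence $M$ is cyclic by Lemma~\ref{passman18.3}(2). Lemma~\ref{passman18.3}(3) then splits the analysis. If $M\nor H$, writing $M=\gen x$ one has that $\bg H x=H/\cent H x$ is a quotient of $H/M\cong S$, hence a $2$-group, and since $x$ is semi-rational (Proposition~\ref{normalcycl}) the group $\bg H x\cong\Gal(\Q_{|x|}/\Q(x))$ has index at most $2$ in $\aut{\gen x}\cong\U(\Z/|x|\Z)$; this forces $\U(\Z/|x|\Z)$ to be a $2$-group, i.e.\ $|x|\mid 15$, so $H=M\rtimes S$ with $M\in\{1,C_3,C_5,C_{15}\}$ and $|H|\mid 240$. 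If instead $M$ is not normal in $H$, then $\oh 2 H\cong Q_8$ and there is a cyclic normal subgroup $T$ of $H$ with $[M:T]=3$; applying Proposition~\ref{normalcycl} to $T$ and using that $H/T\cong{\rm SL}_2(3)$ or $H/T$ is the binary octahedral group (so its abelian quotients have order at most $3$) gives $\phi(|T|)\le 6$, hence $|T|\in\{1,5,7\}$, and a Schur--Zassenhaus splitting $H\cong T\rtimes(H/T)$ shows that $T\neq 1$ would either put $T$ in $\zent H$ or produce a non-cyclic subgroup of order $pq$ in $H$; so $T=1$ and $H\cong{\rm SL}_2(3)$ or $H\cong{\rm SL}_2(3).C_2$.

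The remaining work is the finite check over the candidates $H=M\rtimes S$ with $M\in\{1,C_3,C_5,C_{15}\}$ and $S\in\{C_2,C_4,Q_8,Q_{16}\}$ (with a non-trivial action when $H$ is non-abelian): for each one I would decide whether it is a Frobenius complement and whether it is quadratic rational. The tools are Lemma~\ref{frobeniuscomplementsylows}(1),(2) (a Frobenius complement has a unique involution and all its subgroups of order $pq$ are cyclic), which discards $S_3$, $D_{10}$, $D_{30}$ and the Frobenius group of order $20$; Proposition~\ref{qrfacts} (quadratic rationality passes to quotients, and $C_5$ is not quadratic rational), which discards every candidate having a non-quadratic-rational quotient; Proposition~\ref{normalcycl} together with the constraint $|\Q(\text{generator}):\Q|\le 2$, which discards the cyclic-kernel form of $C_3\rtimes Q_{16}$ and the forms of $C_5\rtimes Q_{16}$ and $C_{15}\rtimes C_4$ carrying a cyclic normal subgroup whose generators are not semi-rational; and finally explicit computation of fields of values, which rules out $C_5\rtimes Q_8$, $C_3\times H_1$ and the ``full-action'' $C_{15}\rtimes Q_8$ by exhibiting an irreducible character taking a value such as $\zeta_5-\zeta_5^{-1}$, $\zeta_3\cdot(\zeta_5+\zeta_5^{-1})$ or $\sqrt{10+2\sqrt5}$, of degree $4$ over $\Q$. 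The survivors are $Q_8$, $Q_{16}$, $C_3\rtimes C_4$, $C_3\rtimes Q_8$, $H_1$ and $H_2$, which together with $C_2,C_3,C_4,C_6,{\rm SL}_2(3),C_3\times Q_8$ and ${\rm SL}_2(3).C_2$ make up the stated list. I expect this last enumeration to be the main obstacle: the list of abstract candidates has to be assembled carefully — in particular $C_3\rtimes Q_{16}$ splits into two isomorphism types according to whether the kernel of the action on $C_3$ is cyclic or quaternion, and only one of them (namely $H_2$) is quadratic rational — and deciding quadratic rationality group by group, while elementary, is laborious and in practice best organized with the help of a computer algebra system for the groups of small order.
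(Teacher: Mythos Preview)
Your approach is essentially the paper's: the same case split (odd $|H|$, abelian $H$, then non-abelian with $\zent H$ a $2$-group), the same bifurcation via Lemma~\ref{passman18.3}(3) into $M\nor H$ versus $\oh 2 H\cong Q_8$, and a similar finite enumeration at the end. Your observation that $\bg H x$ is a quotient of $S$, hence a $2$-group, so that $\U(\Z/|M|\Z)$ must itself be a $2$-group, is a clean shortcut to $|M|\mid 15$ (the paper instead excludes $7$ and $13$ by a separate argument).

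There is one genuine gap in the branch where $M$ is not normal. When $|T|=5$ and $H/T\cong{\rm SL}_2(3).C_2$, your dichotomy ``either $T\sbs\zent H$ or $H$ contains a non-cyclic subgroup of order $pq$'' fails. Since $({\rm SL}_2(3).C_2)^{ab}\cong C_2$, the only non-trivial action of $H/T$ on $T\cong C_5$ is inversion, with kernel $({\rm SL}_2(3).C_2)'={\rm SL}_2(3)$; thus every $3$-element and the unique involution of $H$ centralize $T$, and one checks that all subgroups of order $6$, $10$ or $15$ in $H\cong C_5\rtimes({\rm SL}_2(3).C_2)$ are cyclic. This group (\texttt{SmallGroup(240,105)}) therefore survives your structural test, and since you have already concluded $T=1$ at that point it never enters your final enumeration. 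The paper disposes of it only by verifying directly that it is not quadratic rational. You can either add this single group to the computer check, or use the paper's cleaner structural step: since $T$ and $\oh 2 H$ are normal with coprime orders, $[T,\oh 2 H]\sbs T\cap\oh 2 H=1$, so $\cent H T\supseteq M\cdot\oh 2 H$; this forces $S\cong Q_{16}$ (else $T$ is central), leaving exactly one candidate of order $240$ to discard. A minor point: your identification of $H/T$ with ${\rm SL}_2(3)$ or ${\rm SL}_2(3).C_2$ is correct but deserves a sentence of justification --- it follows because $H/T$ has the right Sylow $2$-subgroup and no normal $2$-complement (such a complement would pull back to $M\nor H$).
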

\begin{proof}
Let $H$ be as in our hypothesis. If $H$ is a $2$-group, then Proposition~\ref{sylowqrfrcmpl} and Proposition~\ref{notQ32} yield that $H$ is isomorphic to one among $C_2$, $C_4$, $Q_8$, $Q_{16}$, and we are done. On the other hand, if $H$ has odd order, then by the discussion in Remark~\ref{semiratorder} we know that $H$ is a $\{3,7\}$-group (but not a $7$-group) with no elements of order $21$, which implies (together with Lemma~\ref{frobeniuscomplementsylows} and Proposition~\ref{sylowqrfrcmpl}) that $H\cong C_3$. Also, if $H$ is abelian and not one of the groups already considered, Proposition~\ref{qrfacts} easily implies $H\cong C_6$. 

For the rest of this proof, we will then assume that $H$ is a non-abelian group of even order, and not a $2$-group. In this situation, if $\zent H$ is not a $2$-group, then Proposition~\ref{23qrcenterfrcpl} yields $H\cong  C_3\times Q_8$ and again we are done. So we can assume that $\zent H$ is a $2$-group; now, either $H\cong {\rm {SL}}_2(3)$ (and we are done), or by Lemma~\ref{2centerqr} and Lemma~\ref{passman18.3} we get that $H$ has cyclic Hall $2'$-subgroups contained in $H'$, so we will assume the latter property as well.

Let us consider first the case when $H$ has a (cyclic) normal $2$-complement $M$: again by Lemma~\ref{passman18.3}, this is what happens unless $\oh 2{H}\cong Q_8$. So we have $H=M\rtimes S$ where $S\in \Syl_2(H)$ and, by Theorem~\ref{spectraquadraticrational} and Proposition~\ref{sylowqrfrcmpl}, the order of $M$ is a divisor of $3\cdot 5\cdot 7\cdot 13$. 

If $M$ has an element $x$ of order $7$ or $13$ then, by Proposition~\ref{normalcycl} and Remark~\ref{semiratorder}, $\bg H x$ should have elements of order $3$, contradicting the fact that in our situation $\bg H x$ is clearly a $2$-group.
    
 Suppose now $|M|=15$ and observe that, $M$ being cyclic, the factor group $S/\cent S M$ is abelian; in particular, $S'\subseteq\cent S M$ is normal in $H$, and the factor group $H/S'\cong M\rtimes S/S'$ is quadratic rational. 
    Recalling that $S$ is isomorphic to one of $C_2$, $C_4$, $Q_8$ or $Q_{16}$, if $S/S'$ is cyclic then $S$ is cyclic, so the only possibilities are $H\cong C_{15}\rtimes C_2$ and $H\cong C_{15}\rtimes C_4$. Since the elements of order $15$ are semi-rational in $H$ by Proposition~\ref{normalcycl}, we have that $4= \phi(15)/2$ divides $|H/\cent H M|$, which forces $S\cong C_4$ and $\cent S M=1$.  But in this case the group $H$ does not have a central involution, against Lemma~\ref{frobeniuscomplementsylows}.
    Consider then the case $S/S'\cong C_2\times C_2$, which implies that $S'$ is isomorphic to either $C_2$ or $C_4$. If $S'\cong C_4$, then $H$ has a cyclic normal subgroup generated by an element $y$ of order $60$, and this $y$ is semi-rational in $H$ by Proposition~\ref{normalcycl}, so $8$ divides $|\bg H y|$; on the other hand, $\bg H y$ is isomorphic to a factor group of $ S/S'\cong C_2\times C_2$, a clear contradiction.
    We are left with the case $H=M\rtimes Q_8$. In this situation, denoting by $x$ a generator of $M$ and taking into account that again $\bg H x\cong S/S'\cong C_2\times C_2$, it is easy to see that the structure of $H/S'$ is the following: $$H/S'\cong M\rtimes (C_2\times C_2)=\langle x \rangle \rtimes (\langle a \rangle \times \langle b\rangle),$$ where $x^a=x^{11}$,  $x^b=x^4$. But then $H$ is isomorphic to $\texttt{SmallGroup(120,14)}$, which is not quadratic rational. Therefore $M$ is isomorphic to either $C_3$ or $C_5$. 

Assume $M\cong C_5$. Then $S\not\cong C_2$, because ${\rm D}_{10}$ does not have a central involution and $C_{10}$ has a center that is not a $2$-group. If $S\cong C_4$ then, arguing similarly, we are left with the only possibility $H\cong H_1$ which, as previously observed, is isomorphic to the group $\texttt{SmallGroup(20,1)}$. Now let us assume $S=Q_8$: it can be checked that $\texttt{SmallGroup(40,4)}\cong C_5\rtimes Q_8$ is the only group of the relevant kind in which the elements of order $5$ are semi-rational, but this group is not quadratic rational.
    Finally, we can check that every group of the form $C_5\rtimes Q_{16}$ is not quadratic rational (in particular, not every generator of the normal cyclic subgroup of order $20$ is semi-rational). 
       
Now let us consider the case $M\cong C_3$. Every possible structure is easily discarded except $C_3\rtimes C_4$ (with the inversion action), \texttt{SmallGroup(24,4)} and $H_2=\,$\texttt{SmallGroup(48,18)} (whose structures are $C_3\rtimes Q_8$ and $C_3\rtimes Q_{16}$ respectively). All of them are quadratic rational groups.
    
\smallskip
Finally, still assuming that $H$ is not a $2$-group, $\zent H$ is a $2$-group and the Hall $2'$-subgroups of $H$ are cyclic, it remains to consider the case when $H$ does not have a normal $2$-complement, which implies $\oh 2 H\cong Q_8$. Denoting by $T$ a Hall $\{2,3\}'$-subgroup of $H$, by Lemma~\ref{passman18.3} and Theorem~\ref{spectraquadraticrational}, we have $$H\cong T\rtimes U$$ where $T$ is cyclic of order a divisor of $5\cdot 7\cdot 13$, and $U$ is a Hall $\{2,3\}$-subgroup of $H$. But in fact, $T$ cannot have any element $x$ of order $7$ or $13$, as otherwise $x$ would be semi-rational in $H$ and $\bg H x=H/\cent H x$ should have an order divisible by $3$, against the fact that $\cent H x$ contains a Hall $2'$-subgroup of~$H$. So $T$ can be either trivial or a group of order $5$.

If $T=1$ and $\oh 2 H\cong Q_8$ is a Sylow $2$-subgroup of $H$, then we get a contradiction because all the possible structures of $H$ can be easily discarded: in ${\rm{SL}}_2(3)$ the Hall $2'$-subgroups are not contained in the derived subgroup, whereas $C_3\times Q_8$ has a normal $2$-complement. Assuming now that $T=1$ and the Sylow $2$-subgroups of $H$ are isomorphic to $Q_{16}$, again the fact that $H$ does not have a normal $2$-complement yields that the only possible structure of $H$ is ${\rm{SL}}_2(3).C_2$ (which is indeed a quadratic rational group), as wanted.

To conclude, assume $|T|=5$. Then $3$ must divide $|U|$ (otherwise $H$ has a normal $2$-complement) and, since $\cent H T$ contains both a full $2$-complement of $H$ and $\oh 2 H$, the Sylow $2$-subgroups of $H$ must be isomorphic to $Q_{16}$ (otherwise the center of $H$ would not be a $2$-group). To sum up, the only possible structure of $H$ is $C_5\rtimes({\rm{SL}}_2(3).C_2)$, i.e., $H$ must be isomorphic to \texttt{SmallGroup(240,105)}. But this group is not quadratic rational, and the proof is complete.
\end{proof}

\begin{proposition}\label{nonsolvablefrobcompl}
Let $H$ be a non-solvable quadratic rational Frobenius complement. Then $H$ is isomorphic either to ${\rm {SL}}_2(5)$ or to ${\rm {SL}}_2(5).C_2\,.$
\end{proposition}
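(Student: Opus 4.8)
The plan is to marry Zassenhaus's classification of non-solvable Frobenius complements with the fact that in a quadratic rational group the elements of prime order are semi-rational (Lemma~\ref{semiratpel}).

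First I would apply \cite[Theorem 18.6]{passpermgroup}: since $H$ is a non-solvable Frobenius complement, there is a normal subgroup $H_0\normal H$ with $[H:H_0]\le 2$ and $H_0\cong{\rm SL}_2(5)\times M$, where $|M|$ is coprime to $2$, $3$ and $5$. In particular $|M|$ is odd, $|H|_{2'}=15\,|M|$, and $|H|_2\le 2^4$ (because $|H_0|_2=2^3$). The heart of the argument is to show $M=1$. Suppose not; let $p$ be the smallest prime divisor of $|M|$ (so $p\ge 7$) and take $x\in M$ with $|x|=p$. Since $H$ is quadratic rational and $|x|$ is an odd prime, $x$ is semi-rational in $H$ by Lemma~\ref{semiratpel}; hence $\bg H x$, being isomorphic to a subgroup of the cyclic group $\aut{\langle x\rangle}$ of order $p-1$ of index at most $2$, is divisible by $(p-1)/2$. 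On the other hand $\cent H x$ contains $\cent{H_0}x={\rm SL}_2(5)\times\cent M x\supseteq{\rm SL}_2(5)\times\langle x\rangle$, so $|\cent H x|$ is a multiple of $120p=2^3\cdot 15\cdot p$. As $|\bg H x|$ divides $[H:\cent H x]=|H|/|\cent H x|$, we deduce $|\bg H x|_2\le 2^4/2^3=2$ and $|\bg H x|_{2'}$ divides $15|M|/(15p)=|M|/p$.

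Combining the two estimates, the odd part of $(p-1)/2$ divides $|M|/p$ while the $2$-part of $(p-1)/2$ is at most $2$. Now every prime divisor of $|M|/p$ is $\ge p$ by minimality of $p$, so if the odd part of $(p-1)/2$ had a prime divisor $q$, then $q\ge p$ and also $q\le (p-1)/2<p$, which is absurd; hence $(p-1)/2$ is a power of $2$, i.e.\ $p$ is a Fermat prime. But the smallest Fermat prime exceeding $5$ is $17$, so $p\ge 17$, whence $(p-1)/2\ge 8$, contradicting that its $2$-part is at most $2$. Therefore $M=1$, so $H_0\cong{\rm SL}_2(5)$ and $[H:H_0]\le 2$. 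If $[H:H_0]=1$, then $H\cong{\rm SL}_2(5)$. If $[H:H_0]=2$, then $|H|_2=2^4$ and a Sylow $2$-subgroup of $H$ contains the Sylow $2$-subgroup $Q_8$ of ${\rm SL}_2(5)$ as a subgroup of index $2$; by Lemma~\ref{frobeniuscomplementsylows} it is cyclic or generalized quaternion, and since $Q_8$ does not embed in $C_{16}$ it must be $Q_{16}$. Together with the uniqueness of the involution of a Frobenius complement (Lemma~\ref{frobeniuscomplementsylows}(2)), this singles out $H\cong{\rm SL}_2(5).C_2$, completing the proof.

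The points that need care are the divisibility bookkeeping in the middle step — in particular verifying that the Fermat-prime case really collapses — and, at the very end, the identification of the correct index-$2$ overgroup of ${\rm SL}_2(5)$, which is precisely where one must use that $H$ is a genuine Frobenius complement and not merely a quadratic rational group. Apart from these, the argument is a direct assembly of the results already established.
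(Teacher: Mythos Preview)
Your proof is correct and takes a genuinely different route from the paper's. Both start from \cite[Theorem~18.6]{passpermgroup}, but the paper eliminates $M$ on the character side: it first observes that $|M|$ is squarefree (Proposition~\ref{sylowqrfrcmpl}) and hence $M$ is cyclic (Lemma~\ref{frobeniuscomplementsylows}), picks $\chi\in\irr M$ with $\Q(\chi)=\Q_{|M|}$, and applies Lemma~\ref{lemmant} to $\theta=1_{{\rm SL}_2(5)}\times\chi\in\irr{H_0}$; since $[H:H_0]\le 2$ this forces $|\Q(\theta):\Q|\le 4$, so $\phi(|M|)\le 4$, and together with $\gcd(|M|,30)=1$ one gets $M=1$ at once. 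Your argument stays on the element side, feeding the semi-rationality of a $p$-element (Lemma~\ref{semiratpel}) into a divisibility count on $|\bg H x|$ for the smallest prime $p\in\pi(M)$, and then letting the minimality of $p$ and the Fermat-prime obstruction finish the job. The paper's route is shorter and avoids the arithmetic bookkeeping; yours avoids having to argue that $M$ is cyclic and uses only the most elementary consequence of quadratic rationality. For the final identification of the index-$2$ overgroup, the paper simply writes that ``the conclusion follows'' once $M=1$, so your discussion of the Sylow $2$-subgroup already goes beyond what is on the page; as you correctly flag, the fact that $H$ is a Frobenius complement (not merely quadratic rational) is what pins down the unique isomorphism type ${\rm SL}_2(5).C_2$.
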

\begin{proof}
    If $H$ is non-solvable then, by \cite[Theorem 18.6]{passpermgroup}, $H$ has a normal subgroup $H_0\cong {\rm {SL}}_2(5)\times M$ where $M$ is a group whose order is coprime with $2,3,5$, and $[H:H_0]\leq 2$. If $M\neq 1$ then its order $m$ is a product of pairwise distinct (odd) primes by Proposition~\ref{sylowqrfrcmpl}; since, by Lemma~\ref{frobeniuscomplementsylows}, every subgroup of $H$ whose order is the product of two distinct primes is cyclic, it is easy to see that $M$ is cyclic and we can choose $\chi\in\irr M$ such that $\Q(\chi)=\Q_m$. Now, consider $\theta=1_{{\rm {SL}}_2(5)}\times \chi\in\irr{H_0}$: since $H$ is quadratic rational and $[H:H_0]\leq 2$, taking into account Lemma~\ref{lemmant} we have $|\Q(\theta):\Q|\leq 4$. But this forces $m$ to be coprime with every prime larger than $5$, whence $M=1$ and the conclusion follows.
\end{proof}

\begin{rem}\label{iranian}
    In Theorem~\ref{qrcompl} and Proposition~\ref{nonsolvablefrobcompl} we obtained a list of all the quadratic rational Frobenius complements, and it can be checked (for instance with \texttt{GAP}) that every group $H$ in this list is $r$-semi-rational for a suitable $r$ such that $r^4$ is congruent to $1$ modulo the exponent of $H$. As we will see in the Main Theorem, this property holds indeed for every quadratic rational Frobenius group.     
Conversely, the possible semi-rational Frobenius complements are listed in \cite{semiratfrobenius}, and it turns out that all of them appear in Theorem~\ref{qrcompl} and Proposition~\ref{nonsolvablefrobcompl} (we take the occasion to point out that the group $C_{15}\rtimes C_4$ is not a semi-rational Frobenius complement, so it should be removed from \cite{semiratfrobenius}. In fact, if an element $x$ of order $15$ is semi-rational in a group of this kind, then the centralizer of $x$ would be $\langle x\rangle$ and the group would not have a central involution). 
\end{rem}

\section{On quadratic rational Frobenius groups with abelian kernel}

In this section we will discuss the possible structure of a quadratic rational Frobenius group \emph{with abelian kernel}. 
Since the kernel of a Frobenius group having a complement of even order is abelian (see \cite[Theorem V.8.18]{huppertgroupsI}), in view of Theorem~\ref{qrcompl} and Proposition~\ref{nonsolvablefrobcompl} this analysis will cover essentially all the relevant cases (namely, except when the Frobenius complement is isomorphic to $C_3$ and the Frobenius kernel is non-abelian). In particular, we will see in Proposition~\ref{prop5.5} that, for a Frobenius group whose kernel is an elementary abelian $p$-group, the properties of semi-rationality, quadratic rationality and uniform semi-rationality are equivalent.

The following proposition yields a significant restriction on the exponent of the Frobenius kernel, when dealing with a quadratic rational Frobenius group whose kernel is abelian. We include the case $p=2$ for completeness, although it can be also deduced from Proposition~\ref{twogroups}.

\begin{proposition}\label{prop4.5}
    Let $G$ be a quadratic rational Frobenius group with abelian kernel $K$. Let $P\in\syl p K$ for some prime $p$ that divides $|K|$. Then the following conclusions hold.
    \begin{enumerate}
        \item If $p>2$ then $P$ is an elementary abelian $p$-group.
        \item If $p=2$ then the exponent of $P$ divides $4$.
    \end{enumerate}
\end{proposition}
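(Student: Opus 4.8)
The plan is to handle both cases uniformly by producing, whenever $P$ has an element of forbidden order, a linear character of $K$ whose induced character to $G$ is incompatible with Proposition~\ref{normalinqr}(2). Suppose toward a contradiction that the conclusion fails; then $P$ contains an element $x$ of order $p^m$ with $m\ge 2$ if $p$ is odd and $m\ge 3$ if $p=2$ (so in all cases $m\ge 2$, and $\phi(p^m)=p^{m-1}(p-1)$). Since $K$ is abelian, $\irr K$ is isomorphic to $K$, so we may choose $\lambda\in\irr K$ of order $p^m$; then $\Q(\lambda)=\Q_{p^m}$ has degree $\phi(p^m)$ over $\Q$. Moreover $\lambda^G$ vanishes off $K$ and, on $K$, is a sum of $G$-conjugates of $\lambda$, so $\Q(\lambda^G)\subseteq\Q_{p^m}$.

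Next I would combine two ingredients. First, since $G$ is quadratic rational, Proposition~\ref{normalinqr}(2) says $K$ is quadratic rational in $G$, so $|\Q(\lambda^G):\Q|\le 2$. Second, Lemma~\ref{lemmant} applied to $\theta=\lambda$ gives a surjection from $I^*_\lambda$ onto $\Gal(\Q(\lambda)/\Q(\lambda^G))=\Gal(\Q_{p^m}/\Q(\lambda^G))$ with kernel $I_\lambda=I_G(\lambda)$; hence $|I^*_\lambda/I_\lambda|=|\Q_{p^m}:\Q(\lambda^G)|\ge\phi(p^m)/2$. But $K\le I_\lambda\le I^*_\lambda$ with $K\nor G$, so $I^*_\lambda/I_\lambda$ is a section of $G/K\cong H$, and since $G$ is Frobenius with kernel $K$ the order of $H$ divides $|K|-1$ and is therefore prime to $p$. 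Consequently $|I^*_\lambda/I_\lambda|$ is prime to $p$.

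The contradiction then comes from the structure of $\Gal(\Q_{p^m}/\Q)\cong\mathcal{U}(\Z/p^m\Z)$. If $p$ is odd this group is cyclic of order $p^{m-1}(p-1)$, so its subgroups of order prime to $p$ have order dividing $p-1$; comparing with $|I^*_\lambda/I_\lambda|\ge p^{m-1}(p-1)/2$ forces $p^{m-1}\le 2$, impossible for $p\ge 3$ and $m\ge 2$. If $p=2$ the group $\mathcal{U}(\Z/2^m\Z)$ is a $2$-group, so $I^*_\lambda/I_\lambda$, having odd order, is trivial, contradicting $|I^*_\lambda/I_\lambda|\ge\phi(2^m)/2=2^{m-2}\ge 2$ (as $m\ge 3$). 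This settles both parts. For part~(1) one can also argue more directly: by Lemma~\ref{semiratpel} the element $x$ of order $p^2$ is semi-rational in $G$, so $\bg G x$ acts on the $\phi(p^2)$ generators of $\gen x$ with at most two orbits; since $\bg G x$ embeds into the cyclic group $\aut{\gen x}$ acting regularly on those generators, each orbit has size $|\bg G x|$, hence $|\bg G x|\ge\phi(p^2)/2$, whereas $\bg G x$, being a section of $H$, has order prime to $p$ and so dividing $p-1$ --- a contradiction.

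The step I anticipate as the crux is recognizing that the quadratic-rationality restriction must be imposed on the \emph{induced} character $\lambda^G$ (through the semi-inertia machinery of Lemma~\ref{lemmant}), not on $\Q(\lambda)$ or $\Q(x)$ directly: applying Lemma~\ref{qrfields} to $\Q(x)$ only bounds the $2$-part of $|x|$ by $8$, so by itself it does not exclude an element of order $8$ when $p=2$. The passage to $\lambda^G$ is precisely what supplies the missing factor of $2$.
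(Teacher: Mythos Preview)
Your proof is correct. For part~(2) it is essentially identical to the paper's argument (the paper takes $\lambda\in\irr P$ rather than $\irr K$, which is immaterial since $P\trianglelefteq G$). For part~(1) the paper argues directly on the element side, using Lemma~\ref{stimeqr} to get $p^{n-1}(p-1)\mid 2\,|\bg G x|$ and then invoking that $\bg G x$ is a $p'$-group; your main argument transplants this to the character side via $I^*_\lambda/I_\lambda$, while your alternative (via Lemma~\ref{semiratpel}) is exactly the paper's approach phrased through semi-rationality instead of Lemma~\ref{stimeqr}. The unified character-based treatment is a mild gain in economy; the paper's split treatment has the virtue of showing that part~(1) needs nothing beyond the elementary observation that $\cent G x=K$. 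Your closing remark about why the $\Q(x)$ bound alone fails to exclude order~$8$ when $p=2$ is exactly the reason the paper, too, switches to the semi-inertia argument there.
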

\begin{proof}
    Let $H$ be a Frobenius complement of $G$, and  let $x$ be a non-trivial element of $P$ having order $p^n$ for a suitable positive integer $n$. Then $\cent G x=K$, and therefore $\bg G x$ is isomorphic to a subgroup of $H$; in particular, since $|H|$ and $|K|$ are coprime, $p$ does not divide $|\bg G x|$. If $p>2$ then, by Lemma~\ref{stimeqr}, we have $p^{n-1}\cdot (p-1)\mid 2\cdot|\bg G x|$. But now the coprimality of $|\bg G x|$ with $p$ forces $n= 1$ and we are done. 

    Assume now $p=2$; in this case, $H$ must have odd order and so we get $H\cong C_3$ by Theorem~\ref{qrcompl}. Let $2^n$ be the exponent of $P$, and take $\lambda\in\irr P$ such that $\Q(\lambda)=\Q_{2^n}$. Observe that, since $K$ is contained in the inertia subgroup $I_\lambda$, then $I^*_\lambda/I_\lambda$ is isomorphic to a subgroup of $C_3$. But $I^*_\lambda/I_\lambda$ is isomorphic to the $2$-group $\Gal(\Q(\lambda)/\Q(\lambda^G))$ by Lemma~\ref{lemmant}. It follows that $I^*_\lambda=I_\lambda$, hence $\Q(\lambda)=\Q(\lambda^G)$. Proposition~\ref{normalinqr} yields now that $\Q(\lambda)\subseteq \Q(\chi)$ for any $\chi\in \Irr(G|\lambda)$; this forces $|\Q(\lambda):\Q|\leq 2$, and therefore $n\leq 2$.  
\end{proof}

So, assuming the setting of Proposition~\ref{prop4.5}, if the order of the Frobenius complement $H$ is even then the kernel $K$ is the direct product of abelian groups of odd prime exponent. In the following proposition we have a closer look at this situation, providing the possible orders of the Frobenius kernel.

\begin{proposition}\label{prop4.6}
 Let $G$ be a quadratic rational Frobenius group with complement $H$ of even order and kernel $K$. Then the possible orders of $K$ are displayed in the third column of Table~$\ref{possibleK}$, according to the structure of $H$ (where $a$ and $b$ are positive integers).

    \begin{table}[h]
    
    \begin{tabular}{|c|c|c|}
    \hline
    $|H|$ & $H$ & $|K|$ \\
    \hline
    $2$& $C_2$ & $3^a,\; 5^a$ \\ 
    $4$& $C_4$ & $3^a,\; 5^b,\; 3^a\cdot 5^b\quad\textnormal{ with }a \textnormal{ even}$ \\ 
    $6$ & $C_6$ & $5^a,\; 7^b,\; 13^b\quad\textnormal{ with }a \textnormal{ even}$\\
    $8$ & $Q_8$ & $3^a,\; 5^a,\; 3^a\cdot 5^b\quad\textnormal{ with }a, b \textnormal{ even}$\\
    $12$ & $C_3\rtimes C_4$ & $5^a,\; 7^a,\; 13^b\quad\textnormal{ with }a \textnormal{ even}$\\
    $16$ & $Q_{16}$ & $3^a,\; 5^a,\; 3^a\cdot 5^b\quad\textnormal{ with }4\mid a,b$\\
    $20$ &$H_1$ & $3^a\quad\textnormal{ with }4\mid a$\\
    $24$ &${\rm {SL}}_2(3)$ & $5^a,\; 7^a,\; 13^a\quad\textnormal{ with }a \text{ even}$\\
    $24$ &$ C_3\rtimes Q_8$ & $5^a,\; 7^a,\; 13^a,\; 5^a\cdot 7^b\quad\textnormal{ with }a,b\text{ even}$\\
    $24$ &$C_3\times Q_8$ & $5^a,\; 7^a,\; 13^a,\; 5^a\cdot 7^b\quad\textnormal{ with }a,b \text{ even}$\\
    $48$ &$H_2$ & $5^a,\; 7^b,\; 13^a,\; 5^a\cdot 7^b\quad\textnormal{ with }4\mid a \textnormal{ and }b \textnormal{ even}$\\
    $48$ &${\rm {SL}}_2(3).C_2$ & $5^a,\; 7^b,\; 13^a\quad\textnormal{ with }4\mid a \textnormal{ and }b \textnormal{ even}$\\
    $120$ & ${\rm {SL}}_2(5)$ & $7^a,\;11^b,\; 13^a\quad\textnormal{ with }4\mid a\textnormal{ and } b \textnormal{ even}$\\
    $240$ & ${\rm {SL}}_2(5).C_2$ & $7^a,\; 11^a,\;13^a,\; 17^a\quad\textnormal{ with }4\mid a$\\
    \hline
    \end{tabular}
    \vspace{0.3cm}
    
    \caption{Possible orders of $K$.}
        \label{possibleK}
    \end{table}
    
\end{proposition}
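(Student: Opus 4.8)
The proof will be a case-by-case inspection of the fourteen possibilities for $H$ given by Theorem~\ref{qrcompl} and Proposition~\ref{nonsolvablefrobcompl} — the twelve groups of even order in the list of Theorem~\ref{qrcompl}, together with ${\rm SL}_2(5)$ and ${\rm SL}_2(5).C_2$. Since $|H|$ is even, the kernel $K$ is abelian by \cite[Theorem V.8.18]{huppertgroupsI}, and $|K|$ is odd because $\gcd(|K|,|H|)=1$; by Proposition~\ref{prop4.5}(1) every Sylow $p$-subgroup of $K$ is elementary abelian, say $P_p\cong C_p^{a_p}$, and when $G$ is solvable Theorem~\ref{spectraquadraticrational} gives $\pi(K)\subseteq\{3,5,7,13\}$. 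What remains, for each admissible $H$, is to pin down $\pi(K)$, to bound each exponent $a_p$, and to determine which sets $\pi(K)$ of size $\geq 2$ survive.

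The first step is to bound $\pi(K)$. Let $p$ be an odd prime dividing $|K|$, so that $p\nmid|H|$. Picking $x\in K$ of order $p$, Lemma~\ref{semiratpel} makes $x$ semi-rational in $G$, so (as in Remark~\ref{semiratorder}) $\bg G x$ is isomorphic to a subgroup of index at most $2$ of $\aut{\langle x\rangle}\cong C_{p-1}$; since $\cent G x=K$ and $\norm G{\langle x\rangle}$ contains $K$, it is of the form $K\rtimes C$ with $C$ a (cyclic) subgroup of $H$, and $\bg G x\cong C$. Hence $H$ must contain a cyclic subgroup of order $p-1$ or $(p-1)/2$, and listing the cyclic subgroup orders of each $H$ and intersecting with $p\nmid|H|$ reduces the candidate primes to those displayed in the relevant row of Table~\ref{possibleK} — with two exceptions in the solvable case, $p=11$ for $H\cong H_1$ and $p=17$ for $H\cong Q_{16}$, and with $\pi(K)\subseteq\{7,11,13,17\}$ in the non-solvable cases (where Theorem~\ref{spectraquadraticrational} is unavailable and the bound comes from this same argument). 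To dispose of the exceptional primes I would use the action of $H$ on $P_p\cong\F_p^{a_p}$, which is faithful and fixed-point-free: by Proposition~\ref{qrfrobthm} together with Lemma~\ref{lemmant}, for every $\theta\in\irr{P_p}$ one has $\Q(\theta^G)=\Q_p^{I^*_\theta/I_\theta}$, where $I^*_\theta/I_\theta$ is the image in $\F_p^\times=\Gal(\Q_p/\Q)$ of the stabiliser in $H$ of the line through $\theta$ in $\irr{P_p}$ (the stabiliser of $\theta$ itself being trivial). Thus quadratic rationality forces every point stabiliser of $H$ on $\pr(\F_p^{a_p})$ to have index at most $2$ in $C_{p-1}$; since every faithful $\F_pH$-module occurring here is a sum of copies of $2$-dimensional faithful modules, a direct look at the corresponding projective orbits shows this fails for $p=11,\ H=H_1$ and for $p=17,\ H=Q_{16}$, and the non-solvable complements are handled in the same way.

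The second step is to bound $a_p$. The unique involution of $H$ acts on $P_p$ as $-1$, and no non-identity element of $H$ has a non-zero fixed point there; since the kernel of any irreducible $\F_pH$-constituent $V$ of $P_p$ is a normal subgroup of $H$ missing the central involution (and, for $H\in\{C_3\times Q_8,\ C_3\rtimes Q_8\}$, also the central $C_3$, as an order-$3$ element must act fixed-point-freely), one checks that $V$ is faithful. Hence $a_p$ is a multiple of the least $\F_p$-dimension $d_p(H)$ of a faithful irreducible $\F_pH$-module; as $p\nmid|H|$ and Schur indices over finite fields are trivial, $d_p(H)=\chi(1)\cdot[\F_p(\chi):\F_p]$ for a suitably chosen faithful $\chi\in\irr H$, and $[\F_p(\chi):\F_p]$ is the order of $p$ modulo the conductor of $\Q(\chi)$. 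Working this out group by group yields exactly the divisibility constraints of the table: for instance $d_p(Q_8)=2$ gives $2\mid a_p$, while for $Q_{16}$ the relevant faithful character has $\Q(\chi)=\Q(\sqrt 2)$ and $2$ is a non-square modulo $3$ and modulo $5$, so $d_3(Q_{16})=d_5(Q_{16})=4$ and $4\mid a_p$; similarly $\Q(\chi)=\Q(\sqrt 5)$ forces $4\mid a_3$ for $H_1$. Finally, when $|\pi(K)|\geq 2$, choosing $\theta=\bigotimes_{p\in\pi(K)}\theta_p$ of order $m=\prod_{p\in\pi(K)}p$ and using the same description (now $I^*_\theta/I_\theta$ is a subgroup of $(\Z/m\Z)^\times$ contained in the image of $\bigcap_p\mathrm{Stab}_H(\langle\theta_p\rangle)$), quadratic rationality requires $|I^*_\theta/I_\theta|\geq\phi(m)/2$; since $\phi(m)$ is multiplicative this is very restrictive, and a short analysis rules out every $\pi(K)$ of size $\geq 3$ and, for each $H$, all pairs of primes except those recorded in the table, with the stated parities of the exponents.

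I expect the genuine obstacle to be the two "finer" ingredients above: the projective-orbit computations needed to kill the borderline primes ($p=11$ for $H_1$, $p=17$ for $Q_{16}$, and the non-solvable complements, where Tent's bound does not apply), and the bookkeeping for multi-prime kernels — both of which require knowing explicitly the faithful $\F_pH$-modules and the character fields $\Q(\chi)$. Everything else is an elementary, if lengthy, inspection.
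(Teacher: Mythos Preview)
Your strategy is sound and will ultimately produce the table, but it is considerably more elaborate than what the paper actually does, and one of your ``genuine obstacles'' is illusory.

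The paper's proof rests on two elementary facts and nothing more. First, pick $\lambda\in\irr K$ with $\Q(\lambda)=\Q_n$ where $n=\exp(K)$; since $I_\lambda=K$, Lemma~\ref{lemmant} and Proposition~\ref{normalinqr} force $I^*_\lambda/K$ to be a subgroup of $H$ isomorphic to an index-$\le 2$ subgroup of $\mathcal U(\Z/n\Z)$. This single observation simultaneously bounds $\pi(K)$ and handles the multi-prime cases (for instance, for $H={\rm SL}_2(3)$ one just observes that ${\rm SL}_2(3)$ has no abelian subgroup of order $\phi(pq)/2$ for any $p\neq q$ in $\{5,7,13\}$). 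Second, the divisibility constraints on the exponents $a_p$ come entirely from the Frobenius congruence $|K|\equiv 1\pmod{|H|}$: e.g.\ the order of $3$ modulo $16$ is $4$, whence $4\mid a$ when $H\cong Q_{16}$ and $p=3$. No representation theory of $H$ over $\F_p$ is needed.

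Two specific comments on your write-up. Your ``exceptional primes'' $p=11$ for $H_1$ and $p=17$ for $Q_{16}$ are not exceptional at all: both groups are solvable, so Theorem~\ref{spectraquadraticrational} (which you already invoked) excludes $11$ and $17$ outright; the projective-orbit computation is unnecessary. And your claim that $a_p$ is a multiple of the \emph{least} dimension $d_p(H)$ of a faithful irreducible $\F_pH$-module is not quite justified: $P_p$ decomposes as a sum of faithful irreducibles, so what you get is that $a_p$ is a sum of such dimensions, and you would still need to check that $d_p(H)$ divides every such dimension (true here, but not argued). Replacing this step by $|K|\equiv 1\pmod{|H|}$ avoids the issue and matches the table exactly; your module-theoretic route in fact sometimes yields strictly sharper constraints (e.g.\ $4\mid a$ rather than $2\mid a$ for $H\cong C_3\times Q_8$, $p=5$), which is harmless for the proposition but shows the two arguments are not identical.
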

\begin{proof}
        The proof mainly relies on computation, taking into account Theorem~\ref{spectraquadraticrational} (for the solvable case) and the following facts.        
Let $\lambda \in \irr K$ such that $\Q(\lambda)=\Q_n$, where $n$ denotes the exponent of $K$. Since, by Proposition~\ref{normalinqr}, the degree of $\Q(\lambda^G)$ over $\Q$ is at most $2$, we see that $H$ has a subgroup isomorphic to the group $I^*_\lambda/K=I^*_\lambda/I_\lambda\cong\Gal(\Q(\lambda)/\Q(\lambda^G))$ (recall Lemma~\ref{lemmant}), which has index at most $2$ in $\mathcal{U}(\Z/n\Z)$. Moreover, recall that $|K|\equiv 1$ mod $|H|$.

 Given that, we sketch a proof of our claim for some of the possible structures of $H$. Consider first the case $H=Q_8$, and assume for the moment that $K$ is a $p$-group for a suitable prime $p$. Then by Theorem~\ref{spectraquadraticrational} we have $p\in \{3,5,7,13\}$, but $p$ cannot be $7$ nor $13$ since there are no $3$-elements in $Q_8$. Now, for $|K|=3^a$, we have $3^a\equiv 1$ mod $|Q_8|$ if and only if $a$ is even. The same argument applies in the case $p=5$: if $|K|=5^a$, then $a$ must be even as well. On the other hand, if $|K|$ is not a prime power, then necessarily we have $|K|=3^a\cdot 5^b$ with $a$ and $b$ even; note that, since in this case the exponent of $K$ is $15$ and $\mathcal{U}(\Z/15\Z)\cong C_2\times C_4$, $H$ must have a subgroup isomorphic either to $C_2\times C_2$ or to $C_4$. The former case yields of course a contradiction, but the latter can actually occur, so $|K|=3^a\cdot 5^b$ is not ruled out at this stage.
    
 Assume next $H={\rm SL}_2(3)$. In this case, using the argument above, we cannot exclude any of the primes that possibly divide the order of a solvable quadratic rational group (and that are coprime to $|{\rm SL}_2(3)|$), because ${\rm SL}_2(3)$ has elements of order $6$. But we can show that $K$ is necessarily of prime-power order; in fact, ${\rm SL}_2(3)$ does not have any abelian subgroup of order $\phi(pq)/2$ for any choice of $p\neq q$ in $\{5,7,13\}$. As above,  the exponent $a$ must be even.
    
 Finally, let us consider $H={\rm SL}_2(5).C_2$. Then the set of orders of the elements in $H$ is $S=\{ 1,2, 3, 4, 5, 6, 8, 10, 12\}$, so the primes $p$ that satisfy $\frac{p-1}{2}\in S$ and $p\nmid|H|$ are those in the set $\{7,11,13,17\}$. The fact that $K$ is necessarily a $p$-group can be seen as above, as well as the condition that $4$ divides the exponent $a$.
\end{proof}

Note that the Sylow subgroups of the Frobenius kernel $K$ are all normalized by $H$, and the action of $H$ on each of them is a Frobenius action.
The next step is to consider these actions, taking into account that, by Proposition~\ref{prop4.5}, every Sylow $p$-subgroup of $K$ for an odd prime $p$ can be viewed as an $H$-module over the field $\F_p$ with $p$ elements.

The following concept, introduced in \cite[Definition~2.8]{farias}, will be relevant in our analysis.   

\begin{definition}
    Let $H$ be a group, and $V$ a finite-dimensional $H$-module over a finite field $\F$. Let $k$ be a positive integer that divides $|\F^\times|$. We say that $V$ has the \emph{$k$-eigenvalue property} if there exists an element $a\in \F^\times$ of order $k$ satisfying the following condition: for every $v\in V$, there exists $h\in H$ such that $h\cdot v=a v$. 
\end{definition}

\begin{theorem}\label{thm4.4}
Let $H$ be a group, $p$ a prime, and assume that $H$ acts on an elementary abelian $p$-group $K$. If $K$ (viewed as an $H$-module over $\F_p$) has the $k$-eigenvalue property, then its dual module $\irr K$ has the $k$-eigenvalue property as well. 
\end{theorem}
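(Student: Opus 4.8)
The plan is to exploit the fact that the dual module $\irr K$ is, as an $\F_p H$-module, isomorphic to the contragredient (transpose-inverse) module of $K$, together with a counting argument over the nonzero vectors. Fix a generator $a\in\F_p^\times$ witnessing the $k$-eigenvalue property of $K$; thus $a$ has order $k$ and, writing $C=\langle a\rangle\leq\F_p^\times$, the hypothesis says that for every $v\in K$ there is $h\in H$ with $h\cdot v\in\{av\}$ — more usefully, for every $v\neq 0$ the $H$-orbit of $v$ contains a scalar multiple $av$, hence (iterating) contains the entire coset $Cv=\{a^i v: 0\le i<k\}$. Equivalently, the action of $H$ on the set $\mathbb{P}(K)$ of $C$-orbits of nonzero vectors has the property that each $H$-orbit on $K\setminus\{0\}$, when partitioned into $C$-cosets, is a \emph{union} of $C$-cosets of size a multiple of $k$; said differently, $|C|=k$ divides the length of every $H$-orbit on $K\setminus\{0\}$. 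The first step is to record this reformulation cleanly: \emph{$K$ has the $k$-eigenvalue property (with witness $a$) if and only if $k$ divides $|v^H|$ for every $v\in K\setminus\{0\}$ and, for each such $v$, the element $a v$ lies in $v^H$.}

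Next I would transport this to the dual. Identify $\irr K$ with $K^* = \mathrm{Hom}_{\F_p}(K,\F_p)$ via a fixed additive character of $\F_p$, so that the $H$-action on $K^*$ is the contragredient: $(h\cdot\varphi)(v)=\varphi(h^{-1}v)$. The key linear-algebra observation is the perfect pairing $K\times K^*\to\F_p$, which is $H$-invariant in the twisted sense $\langle hv,\varphi\rangle=\langle v,h^{-1}\varphi\rangle$; consequently, for any scalar $c\in\F_p^\times$, multiplication by $c$ on $K$ is adjoint to multiplication by $c$ on $K^*$. This lets me reduce the statement for $K^*$ to an existence statement that can be checked orbit-by-orbit. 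The cleanest route is a \emph{double-counting / averaging} argument: consider the bipartite incidence relation between $C$-cosets in $K\setminus\{0\}$ and $C$-cosets in $K^*\setminus\{0\}$ given by non-vanishing of the pairing, and count, for a fixed $C$-coset $\bar\varphi$ in $K^*$, how many $H$-translates of $\bar\varphi$ realise the eigenvalue $a$ on a given nonzero $v$. Because $k\mid |v^H|$ for all $v\neq 0$ (the hypothesis on $K$), a counting argument modulo $k$ forces, for every $\varphi\in K^*\setminus\{0\}$, the existence of $h\in H$ with $h\cdot\varphi = a\varphi$ — which is exactly the $k$-eigenvalue property for $K^*$ with the \emph{same} witness $a$.

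The step I expect to be the main obstacle is making the counting argument genuinely work, i.e.\ showing that ``$k$ divides every $H$-orbit length on $K\setminus\{0\}$'' propagates to $K^*$ rather than merely giving a divisibility that is too weak. The natural device is a Brauer-type permutation lemma: the element $\mathrm{mult}_a$ (scalar multiplication by $a$) acts on $K$ and on $K^*$, it commutes with $H$ in both cases, and it has order $k$; the hypothesis says the $H\times\langle\mathrm{mult}_a\rangle$-orbits on $K\setminus\{0\}$ coincide with the $H$-orbits (each $H$-orbit is already $\mathrm{mult}_a$-stable). One then argues that the number of $H$-orbits on $K\setminus\{0\}$ and on $K^*\setminus\{0\}$ are equal (both count Brauer characters / fixed points appropriately), and more precisely that the \emph{multiset of orbit lengths} is reflected between a module and its dual in a way compatible with the $\mathrm{mult}_a$-action — hence $H$-orbits on $K^*\setminus\{0\}$ are also $\mathrm{mult}_a$-stable, which is the claim. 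I would phrase this via the permutation character: $\mathrm{mult}_a$ fixes an $H$-orbit on $K^*$ iff the corresponding $H$-orbit on $K$ is fixed by $\mathrm{mult}_a$ (a consequence of the pairing and the self-adjointness of $\mathrm{mult}_a$), and every $H$-orbit on $K$ is so fixed by hypothesis; therefore so is every $H$-orbit on $K^*$, giving the $k$-eigenvalue property for $\irr K$.
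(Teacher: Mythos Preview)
Your orbit-counting strategy is sound and genuinely different from the paper's proof, but the final paragraph contains a misstep that should be repaired. There is no canonical bijection between $H$-orbits on $K\setminus\{0\}$ and $H$-orbits on $K^*\setminus\{0\}$, so the phrase ``the corresponding $H$-orbit on $K$'' is not well-defined, and the claim that ``the multiset of orbit lengths'' is reflected between a module and its dual is stronger than you need (and in general need not hold: equal permutation characters do not force equal orbit-length multisets). What \emph{does} hold, and suffices, is the equality of orbit \emph{counts}: by Burnside's lemma and the fact that an invertible matrix and its transpose-inverse have fixed spaces of the same dimension, the number of $H$-orbits on $K\setminus\{0\}$ equals the number on $K^*\setminus\{0\}$, and likewise for the enlarged group $\tilde H:=H\langle\mathrm{mult}_a\rangle$ (noting that $\langle\mathrm{mult}_a\rangle=\langle\mathrm{mult}_{a^{-1}}\rangle$, so the contragredient extension gives the same orbits). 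The hypothesis says the $H$- and $\tilde H$-orbit counts coincide on $K\setminus\{0\}$; hence they coincide on $K^*\setminus\{0\}$, and since each $\tilde H$-orbit there is a union of $H$-orbits, each must be a single $H$-orbit. That is precisely the statement that every $H$-orbit on $K^*\setminus\{0\}$ is $\mathrm{mult}_a$-stable, i.e.\ $\irr K$ has the $k$-eigenvalue property with the same witness $a$.

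By contrast, the paper argues via character fields inside $G=K\rtimes H$: the $k$-eigenvalue property on $K$ gives $k\mid|\bg G x|$ for every nonzero $x\in K$, whence $|\Q(x):\Q|$ divides $(p-1)/k$; since $\Q(\lambda^G)$ is generated by the values $\lambda^G(x)$ (each lying in $\Q(x)$) and $\Gal(\Q_p/\Q)$ is cyclic, one obtains $k\mid|\Q(\lambda):\Q(\lambda^G)|=|I_\lambda^*/I_\lambda|$ for every non-principal $\lambda\in\irr K$, and cyclicity of this quotient finishes. Your route, once patched as above, is more elementary (pure orbit counting, no Galois or semi-inertia machinery) and delivers the same witness $a$ for the dual; the paper's route is longer but dovetails with the semi-inertia framework used throughout the rest of the article.
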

\begin{proof}
We will prove that, working in the semidirect product $G=K\rtimes H$, for every non-principal irreducible character $\lambda$ of $K$ there exists $h\in H\cap I_\lambda^*$ such that the coset $hI_\lambda$ has order $k$ in $I_\lambda^*/I_\lambda$. As it is not difficult to see, this is equivalent to the desired conclusion.

So, let $\lambda$ be a non-principal character in $\irr K$. We claim first that $k$ divides $|\Q(\lambda):\Q(\lambda^G)|$. In fact, let $x$ be a non-trivial element of $K$. We know that $\Q_p$ contains $\Q(x)$ and, since $K$ has the $k$-eigenvalue property as an $H$-module, there exists an element $t\in H$ such that $x^t=x^a$ where $a$ has order $k$ modulo $p$. In particular, $t\in \norm G {\langle x \rangle}$ is such that the order of $t\cent G x$ in $\bg G x$ is $k$, implying that $k\mid |\bg G x|$. As a consequence, setting $d=|\Q(x):\Q|$ and recalling that $\bg G x$ is isomorphic to ${\rm{Gal}}(\Q_{|x|}/\Q(x))$, we get $d\mid \frac{p-1}{k}$.
Now, let $\mu\in\irr K$ and $\chi\in\irr{G|\mu}$; by Proposition~\ref{normalinqr} we have $\Q(\chi(x))=\Q(\mu^G(x))$,
therefore $\Q(x)=\Q(\mu^G(x)\mid\mu\in \irr K)$. We also know that $\Q(\lambda^G)=\Q(\lambda^G(g)\mid g\in G)=\Q(\lambda^G(x)\mid x\in K)$ is generated by the values $\lambda^G(x)$ and, by the above discussion, each of these values generates a field whose degree over $\Q$ divides $d$. 
Since the group $\Gal(\Q_p/\Q)$ is cyclic, we conclude that $|\Q(\lambda^G):\Q|$ divides $d$, so $|\Q_p:\Q(\lambda^G)|=|\Q(\lambda):\Q(\lambda^G)|$ is divisible by $\frac{p-1}{d}$. 
But $d\mid \frac{p-1}{k}$ and then $k\mid \frac{p-1}{d}$.

Finally, since by Lemma~\ref{lemmant} we have that $I_\lambda^*/I_\lambda\cong\Gal(\Q(\lambda)/\Q(\lambda^G))$ is a cyclic group, and by the paragraph above the order of this group is divisible by $k$, the desired conclusion easily follows.
\end{proof}

We are ready to show that, in the class of the Frobenius groups whose kernel is an elementary abelian $p$-group, the properties of semi-rationality, quadratic rationality and uniform semi-rationality are equivalent.  It will be also useful to take into account the following general (easy) observation.

\begin{lemma}\label{easy}
    Let $G$ be a group, $r$ an integer that is coprime with the exponent of $G$, and $x\in G$. Then,  identifying $\bg G x$ with a subgroup of $\aut{\langle x \rangle}$, the following properties are equivalent.
    \begin{enumerate}
        \item $x$ is semi-rational.
        \item $[\aut{\langle x \rangle}: \bg G x]\leq 2$.
     \end{enumerate}    
        In this situation, denoting by $\tau_r$ the automorphism of $\langle x\rangle$ such that $\tau_r(x)=x^r$, $x$ is $r$-semi-rational if and only if  $\aut{\langle x \rangle}=\bg G x\langle \tau_r \rangle$.
\end{lemma}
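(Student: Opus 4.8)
The plan is to translate everything into the language of the regular action of $\aut{\langle x\rangle}$ on the set $\Omega$ of generators of $\langle x\rangle$. First I would record the standard fact that $\aut{\langle x\rangle}$ acts simply transitively on $\Omega$: the action is transitive, and $|\Omega|=\phi(|x|)=|\aut{\langle x\rangle}|$, so it is regular. Moreover, the conjugation action of $\bg G x=\norm G{\langle x\rangle}/\cent G x$ on $\Omega$ is exactly the restriction to the subgroup $\bg G x\leq\aut{\langle x\rangle}$ of this action (this is the faithful permutation action recalled in Section~2). Fixing the basepoint $x\in\Omega$, the map $\alpha\mapsto\alpha(x)$ is a bijection $\aut{\langle x\rangle}\to\Omega$ under which the $\bg G x$-action on $\Omega$ corresponds to multiplication of $\bg G x$ on the abelian group $\aut{\langle x\rangle}$; hence the $\bg G x$-orbits on $\Omega$ are precisely the images of the cosets of $\bg G x$ in $\aut{\langle x\rangle}$, and so the number of $\bg G x$-orbits on $\Omega$ equals $[\aut{\langle x\rangle}:\bg G x]$.

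With this dictionary the equivalence of (1) and (2) is immediate: by definition $x$ is semi-rational in $G$ if and only if $\bg G x$ has at most two orbits on $\Omega$, i.e. if and only if $[\aut{\langle x\rangle}:\bg G x]\le 2$.

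For the final assertion, assume we are in this situation, so $[\aut{\langle x\rangle}:\bg G x]\le 2$; note also that the coprimality of $r$ with $\exp(G)$ (hence with $|x|$) is exactly what makes $\tau_r$ a well-defined element of $\aut{\langle x\rangle}$. Since $x=\mathrm{id}(x)$ lies in the orbit corresponding to the coset $\bg G x$ and $x^r=\tau_r(x)$ lies in the orbit corresponding to $\bg G x\tau_r$, the element $x$ is $r$-semi-rational precisely when $\Omega$ is the union of these two orbits, that is, when $\bg G x\cup\bg G x\tau_r=\aut{\langle x\rangle}$. Finally, $\bg G x\cup\bg G x\tau_r$ coincides with the subgroup $\bg G x\langle\tau_r\rangle$: this is clear when $\tau_r\in\bg G x$, and when $\tau_r\notin\bg G x$ the inclusion $\bg G x\subsetneq\bg G x\langle\tau_r\rangle\leq\aut{\langle x\rangle}$ together with $[\aut{\langle x\rangle}:\bg G x]\le 2$ forces $[\bg G x\langle\tau_r\rangle:\bg G x]=2$, so $\bg G x\langle\tau_r\rangle=\bg G x\cup\bg G x\tau_r$. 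Therefore $x$ is $r$-semi-rational if and only if $\aut{\langle x\rangle}=\bg G x\langle\tau_r\rangle$. There is essentially no obstacle in this argument beyond setting up the torsor-to-coset correspondence correctly and being careful, in the last step, to invoke the semi-rationality hypothesis (without it the equality $\bg G x\cup\bg G x\tau_r=\bg G x\langle\tau_r\rangle$ can fail).
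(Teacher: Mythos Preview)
Your proof is correct and follows essentially the same idea as the paper's: both arguments rest on the fact that the $\bg G x$-orbits on the generators of $\langle x\rangle$ are in bijection with the cosets of $\bg G x$ in $\aut{\langle x\rangle}$. The only cosmetic difference is that the paper phrases this via the Galois-theoretic identification $\bg G x\cong\Gal(\Q_{|x|}/\Q(x))$ (so that $[\aut{\langle x\rangle}:\bg G x]=|\Q(x):\Q|$), whereas you argue the coset/orbit correspondence directly from the regular action; your handling of the final step is also slightly more explicit in covering both the rational and non-rational cases.
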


\begin{proof}
We can clearly assume that $x$ is not a rational element. Since $\bg G x\cong \Gal(\Q_{|x|}/\Q(x))\subseteq\Gal(\Q_{|x|}/\Q)\cong\aut{\langle x \rangle}$, we get $|\Q(x):\Q|=2$ if and only if $[\aut{\langle x \rangle}: \bg G x]=2$. In other words, $x$ is semi-rational if and only if $[\aut{\langle x \rangle}: \bg G x]=2$. Moreover, if in this situation $x$ is $r$-semi-rational, then the automorphism $\tau_r$ does not lie in $\bg G x$ because $x^r$ is not conjugate to $x$ in $G$, thus we have $\aut{\langle x \rangle}=\bg G x\langle \tau_r \rangle$. Conversely, if the latter equality holds, then $x^r$ is not conjugate to $x$ in $G$ and so $x$ is $r$-semi-rational. 
\end{proof}

\begin{proposition}\label{prop5.5}
Let $G$ be a Frobenius group with complement $H$ and kernel $K$. If $K$ is an elementary abelian $p$-group for a suitable prime $p$, then the following properties are equivalent.
    \begin{enumerate}
        \item $G$ is semi-rational.
        \item $G$ is quadratic rational.
        \item $G$ is uniformly semi-rational.
    \end{enumerate}
\end{proposition}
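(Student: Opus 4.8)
The plan is to establish the cycle $(3)\Rightarrow(1)\Rightarrow(2)\Rightarrow(3)$, of which the first implication is immediate since a uniformly semi-rational group is semi-rational by definition. Throughout write $G=K\rtimes H$ and view $K$ as an $\F_p$-vector space on which $H$ acts by conjugation. I would begin with a local description of the relevant sections. If $x\in K\setminus\{1\}$ then the Frobenius property gives $\cent{G}{x}=K$ and, $K$ being abelian, $\norm{G}{\langle x\rangle}=K\rtimes\mathrm{Stab}_H(\langle x\rangle)$, so that $\bg{G}{x}\cong\mathrm{Stab}_H(\langle x\rangle)$, a subgroup of $\aut{\langle x\rangle}\cong\F_p^\times$; by Lemma~\ref{easy}, $x$ is semi-rational in $G$ exactly when this subgroup has index at most $2$ in $\F_p^\times$, and in that case $x$ is $r$-semi-rational for every integer $r$ that is a non-square modulo $p$. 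Dually, for a non-principal $\theta\in\irr{K}$ one has $I_\theta=K$ (the $H$-action on $\irr{K}\setminus\{1\}$ being again fixed-point-free), $\Q(\theta)=\Q_p$, and $I^*_\theta=K\rtimes\mathrm{Stab}_H(\langle\theta\rangle)$, where $\langle\theta\rangle\le\irr{K}$ is the line spanned by $\theta$ in the dual module; so Lemma~\ref{lemmant} gives $|\Q(\theta^G):\Q|=[\F_p^\times:\mathrm{Stab}_H(\langle\theta\rangle)]$, hence $K$ is quadratic rational in $G$ precisely when every line of $\irr{K}$ has stabilizer of index at most $2$ in $\F_p^\times$. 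Finally, a short computation in the semidirect product shows $\cent{G}{x}=\cent{H}{x}$ and $\norm{G}{\langle x\rangle}=\norm{H}{\langle x\rangle}$ for $x\in H$, so $\bg{G}{x}=\bg{H}{x}$; since every element of $G\setminus K$ is conjugate to an element of $H$ (a standard property of Frobenius groups, see \cite{passpermgroup}), the semi-rationality of $G$ is governed by that of $K$ together with that of $H$.

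For $(1)\Rightarrow(2)$: assume $G$ is semi-rational. Then $H=G/K$ is a semi-rational Frobenius complement, hence one of the groups listed in Theorem~\ref{qrcompl} and Proposition~\ref{nonsolvablefrobcompl}, and in particular it is quadratic rational by Remark~\ref{iranian}. Moreover $K$, being normal in the semi-rational group $G$, is semi-rational in $G$, so by the first paragraph every line of $K$ has stabilizer of index at most $2$ in the cyclic group $\F_p^\times$; since the unique subgroup of index $2$ of $\F_p^\times$ is the set of squares, this says exactly that $K$ has the $\frac{p-1}{2}$-eigenvalue property (when $p\ge 5$). By Theorem~\ref{thm4.4} the dual module $\irr{K}$ has the same property, so every line of $\irr{K}$ has stabilizer of index at most $2$, i.e. $K$ is quadratic rational in $G$; for $p\in\{2,3\}$ this is automatic, as then $\Q(\theta^G)\subseteq\Q_p$ has degree at most $2$ over $\Q$. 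Now Proposition~\ref{qrfrobthm} yields that $G$ is quadratic rational.

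For $(2)\Rightarrow(3)$: assume $G$ is quadratic rational. Then $H$ is a quadratic rational Frobenius complement, hence $r_H$-semi-rational for some $r_H$ with $r_H^4\equiv 1$ modulo $\exp(H)$ (Remark~\ref{iranian}). Also $K$ is quadratic rational in $G$ by Proposition~\ref{normalinqr}, so every line of $\irr{K}$ has stabilizer of index at most $2$ and therefore, for $p\ge 5$, $\irr{K}$ has the $\frac{p-1}{2}$-eigenvalue property; applying Theorem~\ref{thm4.4} to $\irr{K}$ (whose dual module is $K$) we conclude that every line of $K$ has stabilizer of index at most $2$, so by the first paragraph every non-identity element of $K$ is $r$-semi-rational in $G$ for any $r$ that is a non-square modulo $p$ (for $p=3$ take $r\equiv -1$, and for $p=2$ there is no constraint). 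Since $p$ and $\exp(H)$ are coprime, choose by the Chinese Remainder Theorem an integer $r$ with $r\equiv r_H$ modulo $\exp(H)$ and $r$ as above modulo $p$; then $r$ is coprime to $\exp(G)=p\cdot\exp(H)$, and $G$ is $r$-semi-rational, because the elements of $G$ inside $K$ are covered by the congruence modulo $p$ while those outside $K$ are conjugate to elements of $H$, for which $\bg{G}{x}=\bg{H}{x}$ and the congruence modulo $\exp(H)$ applies. Hence $G$ is uniformly semi-rational.

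The step I expect to be the main obstacle is the passage, used in both directions, between the family of per-line conditions ``$\mathrm{Stab}_H(\langle v\rangle)$ has index at most $2$ in $\F_p^\times$'' and a single $\frac{p-1}{2}$-eigenvalue property: this reformulation is exactly what makes Theorem~\ref{thm4.4} applicable and lets one move between $K$ and $\irr{K}$, and it relies on the elementary but essential fact that a cyclic group possesses a unique subgroup of index $2$. A secondary point requiring care is the construction of a single exponent $r$ out of $r_H$ and a non-square modulo $p$, together with the verification --- via the Frobenius partition of $G$ and the identity $\bg{G}{x}=\bg{H}{x}$ for $x\in H$ --- that this $r$ governs all of $G$ at once; here one leans on the classification-based fact recorded in Remark~\ref{iranian} that $H$ is itself uniformly semi-rational.
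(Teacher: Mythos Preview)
Your proof is correct and follows essentially the same architecture as the paper's: both rely on Remark~\ref{iranian} to handle $H$, on Theorem~\ref{thm4.4} together with Proposition~\ref{qrfrobthm} for $(1)\Rightarrow(2)$, and on the Chinese Remainder Theorem to assemble a single $r$ in $(2)\Rightarrow(3)$. The one noteworthy variation is in $(2)\Rightarrow(3)$: the paper simply invokes Lemma~\ref{semiratpel} to see that every $p$-element of a quadratic rational group is semi-rational, whereas you first read off the $\frac{p-1}{2}$-eigenvalue property for $\irr K$ from the quadratic rationality of $K$ in $G$ and then apply Theorem~\ref{thm4.4} a second time (to the dual module) to transfer it back to $K$. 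This is legitimate and gives a pleasantly symmetric argument, though the paper's route via Lemma~\ref{semiratpel} is more direct.
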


\begin{proof}
Let us assume that (1) holds, so $G$ is semi-rational. To prove (2), by Proposition~\ref{qrfrobthm}   it will be enough to show that $H$ is quadratic rational, and that $K$ is quadratic rational in $G$. The former property holds by Remark~\ref{iranian}. As for the latter, we have to show that $|\Q(\lambda^G):\Q|\leq 2$ holds for every $\lambda\in\irr K$; in other words, assuming (as we may) $p\neq 2$, we have to show that $|I_\lambda^*/I_\lambda|=|\Q(\lambda):\Q(\lambda^G)|$ is divisible by $\frac{p-1}{2}$ for every non-principal $\lambda\in\irr K$, which means that the dual $H$-module $\irr K$ of $K$ has the $\frac{p-1}{2}$-eigenvalue property. On the other hand, by Lemma~\ref{easy}, every non-trivial element $x$ of $K$ is such that $\frac{p-1}{2}$ divides $|\bg G x|$, which means that the $H$-module $K$ has the $\frac{p-1}{2}$-eigenvalue property; the desired conclusion thus follows by Theorem~\ref{thm4.4}.

Since (3) obviously implies (1), it remains to prove that (2) implies (3). So, let $G$ be quadratic rational: again Remark~\ref{iranian} yields that the complement $H$ is $t$-semi-rational for a suitable integer $t$, and we only have to deal with the elements of order $p$ of $G$. If $x$ is such an element, then $x$ is semi-rational in $G$ by Lemma~\ref{semiratpel}. Now, if $u$ is any fixed integer which has order $p-1$ modulo $p$, an application of Lemma~\ref{easy} yields that $x$ is $u$-semi-rational. Finally, taking an integer $r$ which is congruent to $t$ modulo the exponent of $H$, and to $u$ modulo $p$, we have that $G$ is $r$-semi-rational.
\end{proof}

\begin{rem}\label{USR} Let $G$ be a quadratic rational Frobenius group with a Frobenius complement $H$ of even order, and whose kernel $K$ is an elementary abelian $p$-group for a suitable prime $p$. Then, by Proposition~\ref{prop4.6}, we have $p\in\{3,5,7,11,13,17\}$; we claim that, for each of the possible values of $p$ except $17$, the elements of $K$ are all $u$-semi-rational for a fixed integer $u$ with  $u^4\equiv 1$ modulo $p$. Given that, taking into account Remark~\ref{iranian} and using the Chinese Remainder Theorem, it is easily seen that the whole group $G$ is $r$-semi-rational for a suitable integer $r$ with $r^4\equiv 1$ modulo the exponent of~$G$.

In fact, the above claim is certainly true for $p=3$ or $p=5$, as ${\mathcal{U}}(\Z/p\Z)$ has order $2$ or $4$ respectively. Assume then $p=7$ and, using the notation of Lemma~\ref{easy}, consider the automorphism $\tau_3$ of $\langle x\rangle$ for $x\in K\setminus\{1\}$; we see that $\tau_3$ is a generator of $\aut{\langle x\rangle}$, so $x$ is $3$-semi-rational by Lemma~\ref{easy}. Moreover, the decomposition of $\tau_3$ in its $2$- and $2'$-part is given by $\tau_3=\tau_6\cdot\tau_4$ and, since $\bg G x$ has index $2$ in $\aut{\langle x\rangle}$, the $2'$-part $\tau_4$ must lie in $\bg G x$. Hence we have $\aut{\langle x\rangle}=\bg G x\langle\tau_6\rangle$ and, again by Lemma~\ref{easy}, $x$ is also $6$-semi-rational. Now, $6$ has order $2$ modulo $7$, thus in particular $6^4\equiv 1$ modulo $7$, as claimed. With a similar counting we see that, for $p=11$ or $p=13$, a generator of $\aut{\langle x\rangle}$ (namely $\tau_2$ in both cases) has a $2$-part $\tau_r$ (with $r=10$ and $r=15$, respectively) such that $r^4\equiv 1$ modulo $p$.

On the other hand, for $p=17$, the order of any generator of $\aut{\langle x\rangle}$ is obviously $2^4$ and the above claim fails. However, we will see in the Main Theorem (actually, in Proposition~\ref{tableirreducible}) that $p=17$ does not show up, and the conclusion will be that every quadratic rational Frobenius group is indeed $r$-semi-rational for a suitable integer $r$ with $r^4\equiv 1$ modulo the exponent of $G$.
\end{rem}

In view of Proposition~\ref{prop5.5}, it will be useful to generalize some results of \cite{bac} to the case of uniformly semi-rational groups. The following is Definition~4.3 of \cite{bac}.

\begin{definition}
 Let $H$ be a group, $p$ a prime, and $K$ an $\F_p H$-module such that the corresponding semidirect product $G=K\rtimes H$ is a Frobenius group. Let $\alpha: H\to{\rm{GL}}(K)$ be a representation arising from the module $K$. For $x\in K$, we denote by $Z_G(x)$ the subgroup of $\aut{\langle x \rangle}$ consisting of the automorphisms induced by elements of $\alpha(H)\cap \zent{{\rm{GL}}(K)}$.
\end{definition}

As observed in \cite{bac}, $Z_G(x)$ is the same for every non-trivial $x\in K$ and, if $K$ is absolutely irreducible as an $\F_pH$-module, then $Z_G(x)$ consists of the automorphisms induced by elements of $\alpha(\zent H )$. Note also that $Z_G(x)$ is contained in $\bg G x$.

To conclude this section, we prove a generalization of \cite[Lemma 4.4]{bac} to uniformly semi-rational groups that will be crucial for the Main Theorem.
As mentioned in the Introduction, given a Frobenius group $G$ with kernel $K$ and complement $H$, we denote by $G_n=K^n\rtimes H$ the Frobenius group given by the diagonal action of $H$ on the direct product of $n$ copies of $K$; note that, as easily seen, if $G_n$ is $r$-semi-rational for some integer $n>1$ then $G$ is $r$-semi-rational as well. We will also make use of the notation introduced in Lemma~\ref{easy}.

\begin{lemma}\label{lemma5.4bis}
    Let $H$ be a group, $p$ an odd prime, and $K$ an $\F_pH$-module such that the corresponding semidirect product $G=K\rtimes H$ is an r-semi-rational Frobenius group. Also, let $x$ be a non-trivial element of $K$ and let $n$ be an integer larger than $1$. Then the following properties are equivalent.
\begin{enumerate}
\item $G_n$ is r-semi-rational.
\item $[\aut{\langle x\rangle}: Z_G(x)]\leq 2$ and $Z_G(x)\langle \tau_r \rangle=\aut{\langle x \rangle}$.
\end{enumerate}
\end{lemma}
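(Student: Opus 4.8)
The plan is to analyze the section $G_n = K^n \rtimes H$ via its non-trivial elements, exploiting the fact that all non-trivial elements of $K$ and of $K^n$ have order $p$. First I would recall that for $v \in K^n \setminus \{1\}$ (written $v = (v_1, \dots, v_n)$ with each $v_i \in K$), the centralizer $\cent{G_n}{v}$ is just $K^n$, so $\bg {G_n} v = \norm{G_n}{\langle v \rangle}/\cent{G_n}{v}$ is naturally identified with the subgroup of $\aut{\langle v \rangle}$ consisting of scalars $a \in \F_p^\times$ such that some $h \in H$ acts as multiplication by $a$ simultaneously on all the coordinates $v_i$. Since the action of $H$ on $K^n$ is diagonal, $h \cdot v = a v$ means $h$ acts as the scalar $a$ on the submodule generated by each $v_i$; this is precisely the condition defining $Z_G(x)$ when we take $x$ to be a non-trivial coordinate. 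More precisely, the set of scalars realized on $v$ by elements of $H$ equals the intersection, over the non-zero coordinates $v_i$, of the sets of scalars realized on $v_i$; and since $Z_G(x)$ (being the scalars from $\alpha(H) \cap \zent{\GL(K)}$) is contained in the scalar-action set of \emph{every} non-trivial element and consists exactly of the scalars acting uniformly, the key point is that $Z_G(x)$ is contained in $\bg {G_n} v$ for all $v$, and that one can choose $v$ (e.g. with two coordinates not proportional as module generators, possible since $n > 1$) so that $\bg {G_n} v = Z_G(x)$.

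Given that identification, the proof becomes a direct application of Lemma~\ref{easy} (the ``easy'' lemma). Indeed, $G_n$ is $r$-semi-rational if and only if every element of $G_n$ is $r$-semi-rational; the elements coming from $H$ and the elements of the form $(v, 1, \dots, 1)$ (which lie in a copy of $G$) are already $r$-semi-rational because $G$ is, so the only new condition is on elements $v \in K^n$ whose support has size $\geq 2$. For such a $v$ with $\bg {G_n} v = Z_G(x)$, Lemma~\ref{easy} says $v$ is semi-rational iff $[\aut{\langle x \rangle} : Z_G(x)] \leq 2$, and in that case $v$ is $r$-semi-rational iff $\aut{\langle x \rangle} = Z_G(x)\langle \tau_r \rangle$. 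This gives the implication (1) $\Rightarrow$ (2) by specializing to such a $v$, and the implication (2) $\Rightarrow$ (1) because for \emph{every} non-trivial $v \in K^n$ we have $Z_G(x) \subseteq \bg {G_n} v \subseteq \aut{\langle v \rangle} \cong \aut{\langle x \rangle}$, so (2) forces $[\aut{\langle x \rangle} : \bg {G_n} v] \leq 2$ and $\aut{\langle x \rangle} = \bg {G_n} v \langle \tau_r \rangle$, whence $v$ is $r$-semi-rational again by Lemma~\ref{easy}.

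The main obstacle I anticipate is the bookkeeping needed to justify that one can realize $\bg {G_n} v = Z_G(x)$ for a suitable $v$ when $n > 1$: one must verify that the module $K$ admits two (not necessarily distinct, but appropriately chosen) generating vectors $v_1, v_2$ such that the only scalars $a \in \F_p^\times$ realized simultaneously on both by a single element of $H$ are the uniformly-acting ones, i.e.\ those in $Z_G(x)$. This is where the hypothesis $n > 1$ is genuinely used, and one should be slightly careful if $K$ fails to be irreducible or absolutely irreducible, handling it through the general description of $Z_G(x)$ as scalars from $\alpha(H) \cap \zent{\GL(K)}$ rather than from $\alpha(\zent H)$. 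Once this realization is in place, everything else is a formal consequence of Lemma~\ref{easy} together with the observation (already made in the text, following the definition of $Z_G(x)$) that $Z_G(x) \subseteq \bg G x$ and the analogous containment $Z_G(x) \subseteq \bg {G_n} v$; no computation beyond what is in \cite{bac} for the $r = -1$ case is needed, since the argument there goes through verbatim once ``conjugate to $x^{-1}$'' is replaced by ``conjugate to $x^r$'' and $\langle \tau_{-1} \rangle$ by $\langle \tau_r \rangle$.
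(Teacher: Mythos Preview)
Your direction (2) $\Rightarrow$ (1) is correct and essentially matches the paper's. For (1) $\Rightarrow$ (2), however, the realization of a single witness $v$ with $\bg{G_n}{v} = Z_G(x)$ is not justified by the heuristic you give: taking $v_1,v_2$ merely ``not proportional'' is insufficient when $\dim_{\F_p} K > 2$, since an element $h\in H$ may act as a scalar $a$ on the span of $v_1,v_2$ without acting as $a$ on all of $K$, so that $\bg{G_2}{(v_1,v_2)}$ can strictly contain $Z_G(x)$ even for linearly independent $v_1,v_2$. (The obstruction is one of dimension, not of irreducibility as you suggest.) Your claim can nevertheless be rescued by a short counting argument: fix $v_1\neq 0$; for each $a\notin Z_G(x)$ the fixed-point-free action guarantees at most one $h_a\in H$ with $v_1^{h_a}=v_1^a$, and then the ``bad'' choices of $v_2$ lie in the proper eigenspace $\ker{(\alpha(h_a)-a\cdot{\rm id})}$. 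There are at most $p-2$ such proper subspaces, and their union is a proper subset of $K$, so a good $v_2$ exists.

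The paper avoids this existence question altogether with a different and slicker device. Rather than looking for one $v$, it considers the whole family $v_y=(x,y,\ldots,y)$ as $y$ ranges over $K$. Semi-rationality of each $v_y$ forces $\bg{G_n}{v_y}$ (a subgroup of $\bg G x$ inside the cyclic group $\aut{\langle x\rangle}$) to contain $\tau_u$, where $\tau_u$ generates $\bg G x$; but the element $l_y\in H$ realizing $v_y^{l_y}=v_y^u$ satisfies in particular $x^{l_y}=x^u$, and since the Frobenius action is free there is a \emph{unique} such element, so $l_y$ is independent of $y$. This single $l\in H$ then acts as the scalar $u$ on every $y\in K$, giving $\tau_u\in Z_G(x)$ directly and hence $\bg G x=Z_G(x)$. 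The paper's route exploits the Frobenius hypothesis more pointedly and bypasses the combinatorial step; your route is valid once the counting is filled in, and has the minor advantage of literally producing a witness $v$ as described in Remark~\ref{Gn}.
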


\begin{proof}
Assume property (1). Since $x$ is $r$-semi-rational in $G$, Lemma~\ref{easy} yields $[\aut{\langle x \rangle}:\bg G x]\leq 2$  and $\aut{\langle x \rangle}=\bg G x\langle \tau_r \rangle$; 
moreover, as $|x|=p$, the group $\aut{\langle x \rangle}$ is cyclic and $\bg G x$ is generated by an element $\tau_u\in \aut{\langle x \rangle}$ such that the order of $u$ modulo $p$ is divisible by $\frac{p-1}{2}$. Take $h\in H$ such that $x^h=x^u$, and consider an element $v=(x,y,y,\ldots,y)\in K^n$ where $y\in K$ is arbitrary: this element is semi-rational in $G_n$ as well, so the same argument shows that there exists $l\in H$ such that $v^l=v^u$. Now we get $(x^l,y^l,y^l,\ldots,y^l)=(x^u,y^u,y^u,\ldots,y^u)$, thus $x^l=x^u=x^h$ and, being the action of $H$ on $K$ fixed-point free, we have $l=h$; as a consequence, we also have $y^h=y^u$ for every $y\in K$, meaning that $\tau_u$ lies in $Z_G(x)$. It follows that $\langle \tau_u\rangle=\bg G x\subseteq Z_G(x)$ (so, in fact, equality holds), hence $[\aut{\langle x\rangle}: Z_G(x)]\leq 2$ and $Z_G(x)\langle \tau_r \rangle=\bg G x\langle \tau_r \rangle=\aut{\langle x \rangle}$, as wanted.

Conversely, assume that (2) holds. Then there exist an integer $u$ coprime to $p$ and an element $h\in H$ such that the order of $u$ modulo $p$ is divisible by $\frac{p-1}{2}$, and $y^h=y^u$ for every $y\in K$. Now, if $v=(x_1,\ldots,x_n)$ is any non-trivial element of $K^n$, we have $v^h=(x_1^h,\ldots,x_n^h)=(x_1^u,\ldots,x_n^u)=v^u$: in other words, $\tau_u\in\aut{\langle v\rangle}$ lies in $\bg {G_n} v$ and so $[\aut{\langle v\rangle}:\bg {G_n}v]\leq 2$. Moreover, if $\tau_r$ lies in $Z_G(x)$, i.e., if  $Z_G(x)=\aut{\langle x\rangle}$, then we get the following property: for every integer $j$ coprime to $p$, there exists $t\in H$ with $y^t=y^j$ for all $y\in K$, and it easily follows that $v$ is rational in $G_n$. On the other hand, if $\tau_r\not\in Z_G(x)$, then the order of $r$ modulo $p$ is not a divisor of $\frac{p-1}{2}$; therefore we get $\aut{\langle v\rangle}=\bg{G_n} v\langle\tau_r\rangle$, and Lemma~\ref{easy} yields that $v$ is $r$-semi-rational in $G_n$ also in this case. We conclude that every element of $K^n$ is $r$-semirational in $G_n$; since $H\cong G/K$ is an $r$-semi-rational group, $G_n=K^n\rtimes H$ is $r$-semi-rational, as wanted.  
\end{proof}

\begin{rem}\label{Gn}
Observe that, in the situation of Lemma~\ref{lemma5.4bis}, we have that $G_n$ is uniformly semi-rational for every $n\geq 2$ if and only if $G_2$ is uniformly semi-rational. If this is the case, then the semi-rationality $\sg_{G_n}$ of $G_n$ (which is easily seen to be a subset of $\sg_G$) is the same for all $n\geq 2$, and it is the set  of all the elements $r\in\sg_G$ such that, for a non-trivial $x\in K$ (hence for every $x\in K$), we have $Z_G(x)\langle \tau_r \rangle=\aut{\langle x \rangle}$.
\end{rem}
    
As we will see in the Main Theorem, if the complement $H$ of a quadratic rational Frobenius group has even order, then the Frobenius kernel $K$ is an elementary abelian $p$-group for a suitable (odd) prime~$p$. Thus, the situation analyzed in this section will turn out to cover a ``large" portion of the relevant cases.

\section{A proof of the Main Theorem}

In the previous sections we gathered the theoretical set-up that will be relevant for the Main Theorem. We are then ready to present a proof of it, and we will first discuss the classification of quadratic rational Frobenius groups having a complement of even order. This will be achieved with the four results whose statements are presented next. 

As we will observe in Proposition~\ref{tableirreducible}, the group $C_4$ has two non-isomorphic $1$-dimensional modules over $\F_5$ whose corresponding semidirect products are (isomorphic) quadratic rational Frobenius groups. Before stating the theorems we note that, in part (2) of Theorem~\ref{4}, the action of $C_4$ on each direct factor of $C_5^a$ corresponds to one of these module structures, whereas the action of $C_4$ on each direct factor of $C_5^b$ corresponds to the other. We also remark that the groups as in part (2) of Theorem~\ref{4} are the only groups, among those appearing in the conclusions of the four theorems, which are not uniquely determined (up to isomorphism) by the structure of their Frobenius kernel and complement.

\begin{theorem}\cite[Main Theorem]{qfrob}. \label{1}
    Let $G$ be a rational Frobenius group. Then $G$ is isomorphic to one of the following groups.
    \begin{enumerate}
        \item $C_3^n\rtimes C_2$.
        \item $C_3^{2n}\rtimes Q_8$.
        \item $C_5^2\rtimes Q_8$.
    \end{enumerate}
\end{theorem}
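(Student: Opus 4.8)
The plan is to identify in turn the Frobenius complement $H$ and then the Frobenius kernel $K$, writing $G=K\rtimes H$. First I would note that $H\cong G/K$ is a rational group, being a quotient of $G$; since a rational group of odd order is trivial and $H\neq 1$, the order of $H$ is even, so by \cite[Theorem~V.8.18]{huppertgroupsI} the kernel $K$ is abelian and, by Proposition~\ref{prop4.5}, each of its Sylow subgroups is elementary abelian of odd order. To pin down $H$ I would use that a rational group is in particular quadratic rational, so $H$ occurs in the list of fifteen groups given by Theorem~\ref{qrcompl} and Proposition~\ref{nonsolvablefrobcompl}; a short case-by-case check then eliminates every entry except $C_2$ and $Q_8$ --- the groups $C_3$, $C_6$, ${\rm SL}_2(3)$ and $C_3\times Q_8$ have a quotient isomorphic to the non-rational group $C_3$; the groups $Q_{16}$, ${\rm SL}_2(5)$ and ${\rm SL}_2(5).C_2$ have an irreducible character taking an irrational value ($\sqrt2$ for $Q_{16}$, a value involving $\sqrt5$ for the other two); and in each of $C_4$, $C_3\rtimes C_4$, $H_1$, $C_3\rtimes Q_8$, $H_2$ and ${\rm SL}_2(3).C_2$ one exhibits an element $x$ (of order $4$, $8$ or $12$) for which $\bg{H}{x}$ is strictly smaller than $\aut{\langle x\rangle}$. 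Thus $H\cong C_2$ or $H\cong Q_8$, and Proposition~\ref{prop4.6} then restricts $|K|$.

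Next I would dispose of the two cases. For $H\cong C_2$, Proposition~\ref{prop4.6} gives $|K|\in\{3^a,5^a\}$; if $5$ divided $|K|$ and $x\in K$ had order $5$, then $\cent{G}{x}=K$ would force $\bg{G}{x}$ to embed into $H\cong C_2$, too small to equal $\aut{\langle x\rangle}\cong C_4$, so $x$ would not be rational by Lemma~\ref{easy} --- a contradiction. Hence $K$ is an elementary abelian $3$-group on which the involution of $H$ acts fixed-point-freely, therefore as $-1$; then every element of order $3$ is conjugate to its inverse and, as one checks directly, the involutions of $G$ form a single conjugacy class, so $G\cong C_3^n\rtimes C_2$ is rational, which is conclusion~(1). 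For $H\cong Q_8$, Proposition~\ref{prop4.6} gives $\pi(K)\subseteq\{3,5\}$; if both $3$ and $5$ divided $|K|$, then for commuting $x,y\in K$ of orders $3$ and $5$ the group $\bg{G}{xy}$ would embed into $Q_8$ while rationality forces it to equal $\aut{\langle xy\rangle}\cong C_2\times C_4$, which is not a subgroup of $Q_8$ --- a contradiction. So $K$ is a $3$-group or a $5$-group. If $K$ is a $3$-group, then $K\cong C_3^{2n}$ by Proposition~\ref{prop4.6}, the central involution of $Q_8$ again acts as $-1$, and exactly as before $G\cong C_3^{2n}\rtimes Q_8$ is rational --- conclusion~(2). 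If $K\cong C_5^b$, then for $x\in K$ of order $5$ one has $\bg{G}{x}\cong{\rm Stab}_{Q_8}(\langle x\rangle)$, a cyclic subgroup of $\aut{\langle x\rangle}\cong C_4$, and rationality forces this stabiliser to be all of $C_4$ for \emph{every} line of $K$; since a line invariant under all of $Q_8$ would give an epimorphism $Q_8\twoheadrightarrow C_4$, which is impossible, no line is $Q_8$-invariant, and (using the structure of $\F_5Q_8$-modules) this full-stabiliser condition can hold only when $K$ is the unique $2$-dimensional faithful module, i.e.\ $K\cong C_5^2$. For $b\geq 4$ the condition fails --- most cleanly by Lemma~\ref{lemma5.4bis} and Remark~\ref{Gn}, since there $Z_G(x)$ is the image of $\zent{{\rm GL}_2(5)}\cap\alpha(Q_8)=\{\pm I\}$, a group of order $2$, which cannot equal $\aut{\langle x\rangle}$. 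This leaves $K\cong C_5^2$, i.e.\ conclusion~(3).

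Finally I would verify the converse, i.e.\ that the three listed families really are rational; the cases $C_3^n\rtimes C_2$ and $C_3^{2n}\rtimes Q_8$ were already handled above, so the remaining --- and, I expect, the main obstacle --- is $C_5^2\rtimes Q_8$. For this I would realise $Q_8$ inside ${\rm SL}_2(5)$ acting on $\F_5^2$, check that the action is free on $\F_5^2\setminus\{0\}$ (so that the semidirect product is a Frobenius group), and verify that the six lines of $\F_5^2$ split as $2+2+2$ among the three cyclic subgroups $\langle i\rangle,\langle j\rangle,\langle k\rangle$ of $Q_8$, each of these $C_4$'s acting regularly on the four non-zero vectors of each of its two lines; this makes every element of order $5$ conjugate to all of its powers, while the elements of order $2$ and $4$ are trivially rational and the free action rules out elements of order $10$ or $20$. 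The two genuinely substantive points are thus the classification of rational Frobenius complements --- which I would simply extract from Theorem~\ref{qrcompl} and Proposition~\ref{nonsolvablefrobcompl} rather than reprove --- and this last module-theoretic verification for $C_5^2\rtimes Q_8$; everything else is bookkeeping with the normaliser/centraliser estimates of Section~2 and the order tables of Section~4.
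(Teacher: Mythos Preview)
Your argument is essentially correct and would stand as a self-contained proof of Theorem~\ref{1}, but one detail is off: the group ${\rm SL}_2(5).C_2$ does \emph{not} have an irreducible character involving $\sqrt{5}$. The two ${\rm SL}_2(5)$-classes of order-$5$ elements (on which the $\sqrt{5}$-valued characters of ${\rm SL}_2(5)$ differ) are fused by the outer $C_2$, so order-$5$ elements become rational in the extension and the $\sqrt{5}$-characters of ${\rm SL}_2(5)$ induce to rational characters. The irrationality of ${\rm SL}_2(5).C_2$ instead comes from its Sylow $2$-subgroup $Q_{16}$: for $y$ of order $8$ one finds $\bg{H}{y}=\langle\tau_{-1}\rangle$, so $\Q(y)=\Q(\sqrt{2})$. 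Thus your conclusion stands; only the stated reason needs adjusting (and in fact your own ``order $4$, $8$ or $12$'' alternative covers this case correctly).

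As for the comparison with the paper: the paper does not give an independent proof of Theorem~\ref{1}. It is stated with the citation \cite{qfrob} and then absorbed into the unified ``Proof of Theorems~\ref{1}--\ref{4}'', which proceeds by first reducing $K$ to an elementary abelian $p$-group, then invoking the \texttt{GAP}-computed Table~\ref{irreducibleK} for the irreducible pieces, and finally using Lemma~\ref{lemma5.4bis} (again with \texttt{GAP}) to decide which diagonal products $G_n$ survive; the rational groups simply fall out of that list. Your route is genuinely different and more elementary for this particular statement: you isolate the rational complements $C_2$ and $Q_8$ by a short case check against Theorem~\ref{qrcompl} and Proposition~\ref{nonsolvablefrobcompl}, and then handle the two resulting kernel problems by hand via Lemma~\ref{easy} and Lemma~\ref{lemma5.4bis}, never touching Table~\ref{irreducibleK} or any machine computation. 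What you gain is a transparent standalone argument; what the paper's approach buys is uniformity---the same machinery disposes of Theorems~\ref{1}--\ref{4} simultaneously.
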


\begin{theorem} \cite[Theorem 1.3]{bac}.\label{2}
    Let $G$ be a non-rational, inverse semi-rational Frobenius group with complement of even order. Then $G$ is isomorphic to one of the following groups. 
        \begin{enumerate}
                \item $C_3^{2n}\rtimes C_4$, with $\sg_G=-1\cdot \langle 5\rangle$.
                \item $C_5^n \rtimes C_4$, with $\sg_G=-1\cdot \langle 13\rangle$.
                \item $C_7^n \rtimes C_6$, with $\sg_G=-1\cdot \langle 19\rangle$.
                \item $C_5^2\rtimes (C_3\rtimes C_4)$, with $\sg_G=-1\cdot \langle 17,41\rangle$.
                \item $C_5^2 \rtimes {\rm {SL}}_2(3)$, with $\sg_G=-1\cdot \langle 7,19\rangle$.
                \item $C_7^2\rtimes {\rm {SL}}_2(3)$, with $\sg_G=-1\cdot \langle 13, 19\rangle$.
                \item $C_7^{2n} \rtimes (C_3\times Q_8)$, with $\sg_G=-1\cdot \langle 19,43\rangle$.
            \end{enumerate} 
\end{theorem}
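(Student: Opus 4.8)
The plan is to prove this classification by specialising, to the inverse semi-rational (cut) case, the scheme used above for semi-rational Frobenius groups with complement of even order; all the structural inputs are already available. Since $|H|$ is even, the kernel $K$ is abelian by \cite[Theorem V.8.18]{huppertgroupsI}, and since cut groups are quadratic rational, the cut analogue of Proposition~\ref{qrfrobthm} gives that $G$ is cut if and only if $H$ is cut and $K$ is ``cut in $G$'', i.e.\ $\Q(\lambda^G)$ is $\Q$ or an imaginary quadratic field for every $\lambda\in\irr K$. First I would identify $H$: by Theorem~\ref{qrcompl} and Proposition~\ref{nonsolvablefrobcompl} it belongs to an explicit finite list of quadratic rational Frobenius complements, from which the non-cut ones are deleted --- ${\rm SL}_2(5)$ and ${\rm SL}_2(5).C_2$ carry $\Q(\sqrt5)$-valued characters; $Q_{16}$ is not inverse semi-rational, since the generators of its cyclic subgroup of order $8$ split into the two classes $\{x,x^{-1}\}$ and $\{x^3,x^{-3}\}$; and $H_1$, $H_2$, $C_3\rtimes Q_8$, ${\rm SL}_2(3).C_2$ fail for the same reason (recalling also that a solvable cut group is a $\{2,3,5,7\}$-group, \cite[Theorem 1.2]{bac}, quoted in Remark~\ref{semiratorder}). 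This leaves $H\in\{C_2,C_4,C_6,Q_8,C_3\rtimes C_4,{\rm SL}_2(3),C_3\times Q_8\}$; the cases $H\cong C_2$ and $H\cong Q_8$ yield only rational groups --- for $C_2$ one checks directly that $K$ must be a $3$-group, since the quadratic subfield of $\Q_5$ is real --- so these fall under Theorem~\ref{1} and are excluded from the statement, while the five remaining complements will supply lines (1)--(2), (3), (5), (6)--(7), (4) respectively.

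The second step analyses $K$ as a module. By Proposition~\ref{prop4.5} every Sylow subgroup of $K$ is elementary abelian; since $|K|\equiv 1\pmod{|H|}$ and $\pi(G)\subseteq\{2,3,5,7\}$, only $3,5,7$ can divide $|K|$; and the argument in the proof of Proposition~\ref{prop4.6} (none of the five complements has an abelian subgroup of order $\phi(pq)/2$ for distinct $p,q$) forces $K$ to be a single elementary abelian $p$-group, hence a faithful fixed-point-free $\F_p H$-module with $p\in\{3,5,7\}$. Now ``$K$ cut in $G$'' translates, via Lemma~\ref{lemmant} and the eigenvalue-property machinery of Theorem~\ref{thm4.4}, into a condition on which scalars of $\F_p^\times$ the $H$-action realises: since $\aut{\langle x\rangle}$ is cyclic of order $p-1$, Lemma~\ref{easy} together with the position of $\tau_{-1}$ inside the unique index-$2$ subgroup of $\aut{\langle x\rangle}$ shows that for $p\equiv 3\pmod 4$ (so $p=3,7$) it suffices that $K$ --- equivalently, by Theorem~\ref{thm4.4}, its dual $\irr K$ --- has the $\tfrac{p-1}{2}$-eigenvalue property, whereas for $p=5$ one needs the $4$-eigenvalue property, i.e.\ the elements of order $5$ must be rational. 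Enumerating the finitely many faithful fixed-point-free $\F_p H$-modules for each of the five complements then isolates exactly the admissible irreducible modules, which are the kernels appearing in the statement.

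The last step handles the parameter $n$ and excludes ``mixed'' kernels. Here one uses the cut specialisation of Lemma~\ref{lemma5.4bis}: for $G_n=K^n\rtimes H$ one has that $G_n$ is cut if and only if $[\aut{\langle x\rangle}:Z_G(x)]\le 2$ and $Z_G(x)\langle\tau_{-1}\rangle=\aut{\langle x\rangle}$, where in the absolutely irreducible case $Z_G(x)$ is the group of scalar automorphisms induced by $\zent H$. This permits an arbitrary number of summands exactly when $Z_G(x)$ is large enough --- giving $C_3^{2n}\rtimes C_4$, $C_5^n\rtimes C_4$, $C_7^n\rtimes C_6$, $C_7^{2n}\rtimes(C_3\times Q_8)$ --- and forbids $n>1$ for the three absolutely irreducible $2$-dimensional modules with $Z_G(x)=\langle\tau_{-1}\rangle$, namely $C_5^2\rtimes(C_3\rtimes C_4)$, $C_5^2\rtimes{\rm SL}_2(3)$, $C_7^2\rtimes{\rm SL}_2(3)$; a direct computation of $\bg G x$ on an element with nonzero components in two inequivalent constituents shows that such a direct sum is never cut. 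In each surviving line the semi-rationality $\sg_G=-1\cdot\rg_G$ is then read off from the realised scalars via the isomorphism $\rg_G\cong\Gal(\Q_{\exp(G)}/\Q(G))$, yielding the displayed cosets. I expect the main obstacle to be precisely this last, essentially computational, layer: for each of the five complements one must enumerate the faithful fixed-point-free modules over $\F_3$, $\F_5$, $\F_7$ up to the relevant equivalence, decide cut-ness and the admissible number of summands, and verify that no small exceptional module has been overlooked --- the earlier reductions (abelian kernel, bounded prime set, elementary abelian Sylow subgroups) being comparatively soft.
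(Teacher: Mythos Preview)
Your overall scheme is sound and essentially matches the paper's strategy: the paper proves Theorems~\ref{1}--\ref{4} in one stroke (and in fact cites Theorem~\ref{2} from \cite{bac}), showing first that $K$ is an elementary abelian $p$-group, then invoking Proposition~\ref{prop5.5}, Table~\ref{irreducibleK}, and Lemma~\ref{lemma5.4bis}. Your proposal is the natural specialisation of this to $r=-1$, and your filtering of the complements and your translation of ``cut in $G$'' into the $\tfrac{p-1}{2}$- versus $(p-1)$-eigenvalue property (according to whether $p\equiv 3$ or $1\pmod 4$) are correct.

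There is, however, a concrete gap in your reduction to a single prime. The claim that ``none of the five complements has an abelian subgroup of order $\phi(pq)/2$'' is false: for $H\cong C_4$ one has $\phi(15)/2=4$ and $C_4$ itself is abelian of order~$4$; for $H\cong C_3\times Q_8$ one has $\phi(35)/2=12$ and $C_3\times C_4\le H$ is abelian of order~$12$. Accordingly, Table~\ref{possibleK} does \emph{not} exclude mixed $\{3,5\}$-kernels for $C_4$ or mixed $\{5,7\}$-kernels for $C_3\times Q_8$, and the bare Proposition~\ref{prop4.6} argument cannot do the job here. In the paper's combined proof this is handled by an extra step: choosing $\lambda\in\irr K$ with $\Q(\lambda)=\Q_{pq}$, one shows that any $h\in H$ representing an element of order~$4$ (resp.~$12$) in $I^*_\lambda/I_\lambda$ must stabilise $\langle\lambda\rangle$, hence $\langle\lambda^p\rangle$, and then a suitable power of $h$ fixes a nontrivial character of $K$, contradicting the Frobenius action. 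You need either this argument or a cut-specific one (e.g.\ computing $\bg G z$ for an element $z$ of order $pq$ and checking that $\bg G z\langle\tau_{-1}\rangle\ne\aut{\langle z\rangle}$); your sentence about ``components in two inequivalent constituents'' addresses only the single-prime non-homogeneous case, not mixed primes. A smaller point: your dismissal of $H\cong Q_8$ as ``yielding only rational groups'' is correct but not justified in the text --- it follows from your own $Z_G(x)$ analysis (for $p=5$, $Z_G(x)=\langle\tau_{-1}\rangle$ forces $n=1$), and you should say so.
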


\begin{theorem}\label{3}
    Let $G$ be a quadratic rational Frobenius group which is not inverse semi-rational. Assume that the Frobenius complement $H$ has even order. Then  $G$ is uniformly semi-rational. In particular, if $G$ is $r$-semi-rational for some $r$ whose order modulo the exponent of $G$ is $2$, then $G$ is isomorphic to one of the following groups.
    \begin{enumerate}
            \item $C_3^{4n} \rtimes Q_{16}$, with $\sg_G=5\cdot \langle 7,23\rangle$.
            \item $C_7^2\rtimes ({\rm {SL}}_2(3).C_2)$, with  $\sg_G=5\cdot \langle 73, 113, 127\rangle$.
    \end{enumerate}
    
\end{theorem}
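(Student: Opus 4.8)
The plan is to prove the two assertions separately: first that every quadratic rational Frobenius group $G=K\rtimes H$ with $|H|$ even which is not inverse semi-rational is uniformly semi-rational, and then, assuming in addition that $G$ is $r$-semi-rational for some $r$ of order $2$ modulo $\exp(G)$, to identify the four families by running through the finite list of admissible complements supplied by Theorem~\ref{qrcompl} and of admissible kernel orders supplied by Proposition~\ref{prop4.6}. The whole argument is organised so as to reduce to the situation treated in Proposition~\ref{prop5.5} and Lemma~\ref{lemma5.4bis}, namely where the Frobenius kernel is an elementary abelian $p$-group.

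For uniform semi-rationality: since $|H|$ is even, $K$ is abelian by \cite[Theorem V.8.18]{huppertgroupsI}, and since $H\not\cong C_3$, Proposition~\ref{prop4.5} forces $2\nmid|K|$ and each Sylow subgroup of $K$ to be elementary abelian. The decisive preliminary point is that $K$ is in fact a $p$-group. If not, pick $z\in K$ of order $pq$ with $p\neq q$; then $\cent G z=K$, so $\bg G z$ is at the same time a subgroup of $H$ and (abstractly) a subgroup of $\aut{\langle z\rangle}\cong C_{p-1}\times C_{q-1}$, and comparing this with the divisibility estimate of Lemma~\ref{stimeqr}, with the congruence $|K|\equiv1\pmod{|H|}$, and --- for those complements still admitting a composite kernel order in Table~\ref{possibleK} --- with the existence of a ``mixed'' character $\lambda\in\irr K$ for which $|\Q(\lambda^G):\Q|>2$ (evaluated through Lemma~\ref{lemmant} and Proposition~\ref{normalinqr}), one reaches a contradiction in each of the few remaining cases. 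Once $K$ is an elementary abelian $p$-group, Proposition~\ref{prop5.5} gives that $G$ is uniformly semi-rational, and combining Remark~\ref{iranian} (applied to $H$) with Remark~\ref{USR} via the Chinese Remainder Theorem produces an $r$ with $r^4\equiv1\pmod{\exp(G)}$; along the way one records, using Proposition~\ref{nonsolvablefrobcompl} together with Table~\ref{possibleK}, that $p$ is never $17$ and that $H\not\cong{\rm SL}_2(5).C_2$, so the case excepted in Remark~\ref{USR} does not arise.

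For the classification, fix the additional hypothesis that $G$ is $r$-semi-rational for some $r$ of order $2$ modulo $\exp(G)$; since $G$ is not inverse semi-rational it is not rational either, so Theorems~\ref{1} and~\ref{2} dispose of the rational and cut cases. It then remains, for each admissible pair $(H,p)$ in Proposition~\ref{prop4.6}, to determine which faithful fixed-point-free $\F_pH$-modules $K$ (and which multiplicities $n$ in $G=G_n$) yield a quadratic rational Frobenius group satisfying the prescribed semi-rationality. For $K$ absolutely irreducible (the case $n=1$) I would compute $\bg G x$ for $x\in K\setminus\{1\}$ from the scalar subgroup $\alpha(\zent H)\cap\zent{{\rm GL}(K)}$ together with the Galois action coming from the field of definition of the module, verify quadratic rationality via Lemmas~\ref{stimeqr} and~\ref{lemmant}, and read off $\rg_G$ and $\sg_G$ through the cyclotomic identifications of Remark~\ref{rationality}; for $n\geq2$ the analysis is governed entirely by Lemma~\ref{lemma5.4bis} and Remark~\ref{Gn}, which reduce matters to the two conditions $[\aut{\langle x\rangle}:Z_G(x)]\leq2$ and $Z_G(x)\langle\tau_r\rangle=\aut{\langle x\rangle}$ and thereby pin down the admissible $n$ and the set $\sg_G$. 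Keeping exactly those triples $(H,p,n)$ whose semi-rationality $\sg_G$ contains an element of order $2$ modulo $\exp(G)$ leaves precisely $C_3^{4n}\rtimes Q_{16}$, $C_5^{4n}\rtimes Q_{16}$, $C_5^{4n}\rtimes(C_3\rtimes Q_8)$ and $C_7^2\rtimes({\rm SL}_2(3).C_2)$; the converse, that each of these is quadratic rational with the stated $\sg_G$, is the ``if'' direction of Lemma~\ref{lemma5.4bis} supplemented by a direct field-of-values computation (and may be double-checked with \texttt{GAP}).

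The main obstacle I anticipate is the per-complement module analysis in this last step: for the ``large'' complements, in particular $Q_{16}$ and ${\rm SL}_2(3).C_2$, one must classify the relevant $\F_pH$-modules up to the pertinent equivalence, identify $\alpha(\zent H)\cap\zent{{\rm GL}(K)}$, and convert the semi-inertia data of Lemma~\ref{lemmant} into the precise shape of $\bg G x$ and of the fields $\Q(\chi)$ --- which forces one to work simultaneously with modular representations and with cyclotomic arithmetic. A secondary, more technical, difficulty is the reduction ``$K$ is a $p$-group'', since Table~\ref{possibleK} does not rule out composite kernel orders on its own, together with the need to discard the stray cases ($p=17$, and $H\cong{\rm SL}_2(5).C_2$) left open by Remarks~\ref{iranian} and~\ref{USR}.
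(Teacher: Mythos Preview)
Your overall strategy matches the paper's: reduce to $K$ being an elementary abelian $p$-group via Propositions~\ref{prop4.5} and~\ref{prop4.6} together with a character argument (Lemma~\ref{lemmant}, Proposition~\ref{normalinqr}) to exclude composite $|K|$, then apply Proposition~\ref{prop5.5} for uniform semi-rationality, and finally run through the irreducible building blocks (Remark~\ref{tableirreducible}) using Lemma~\ref{lemma5.4bis} and \texttt{GAP}. The paper does exactly this, and also leans on computation for the last step.

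There is, however, one genuine gap. Once $K$ is an elementary abelian $p$-group, Maschke's theorem decomposes it as a direct sum of irreducible $\F_pH$-submodules, but these need not all be isomorphic. Your dichotomy ``$K$ absolutely irreducible (the case $n=1$)'' versus ``$n\geq 2$ via Lemma~\ref{lemma5.4bis}'' covers only the \emph{homogeneous} case $K\cong V^n$, whereas Table~\ref{irreducibleK} shows that for several complements (for instance $C_4$ with $p=5$, $C_6$ with $p\in\{7,13\}$, $C_3\times Q_8$ with $p\in\{7,13\}$, and ${\rm SL}_2(3).C_2$ with $p=7$) there exist two non-isomorphic irreducible modules over the same $\F_p$. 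You must therefore also treat kernels of the shape $V_1^a\oplus V_2^b$ with $V_1\not\cong V_2$; Lemma~\ref{lemma5.4bis} says nothing about these. The paper handles this separately by taking a ``mixed'' element $v=(x,y)\in V_1\times V_2$, computing $\bg G v$ directly, and checking whether the $\tfrac{p-1}{2}$-eigenvalue property survives; all non-homogeneous cases are thereby discarded except $(C_5^a\times C_5^b)\rtimes C_4$, which falls into Theorem~\ref{4} rather than Theorem~\ref{3}. Without this step your list is not shown to be exhaustive. (A minor side remark: the irreducible modules in Table~\ref{irreducibleK} are not all \emph{absolutely} irreducible, so your use of that adjective, and your identification of $Z_G(x)$ with the scalars coming from $\alpha(\zent H)$, needs a word of care.)
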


\begin{theorem}\label{4}

Let $G$ be a quadratic rational Frobenius group which is not any of the groups in Theorem~$\ref{1}$, Theorem~$\ref{2}$, or Theorem~$\ref{3}$. Assume that the Frobenius complement $H$ has even order. Then  
$G$ is $r$-semi-rational for a suitable $r$ whose order modulo the exponent of $G$ is $4$, and $G$ is isomorphic to one of the following groups.
    \begin{enumerate}
            \item  $C_5^n\rtimes C_2$, with $\sg_G=3\cdot \langle -1\rangle$.
            \item $(C_5^a\times C_5^b)\rtimes C_4$, with $\sg_G=3\cdot \langle 9 \rangle$.
            \item $C_5^{2n}\rtimes C_6$, with $\sg_G=-7\cdot \langle 19\rangle$.
            \item $C_{13}^n\rtimes C_6$, with $\sg_G=-7\cdot \langle 49\rangle$.
            \item $C_5^{2n}\rtimes Q_8$, with $n>1$ and $\sg_G=3\cdot \langle 9,11 \rangle$.
            \item $C_5^{2n}\rtimes (C_3\rtimes C_4)$, with $n>1$ and $\sg_G=7\cdot \langle 29, 41 \rangle$.
            \item $C_5^{4n}\rtimes Q_{16}$, with $\sg_G=3\cdot \langle 31,9\rangle$.
            \item $C_3^{4n}\rtimes H_1$, with $\sg_G=7\cdot \langle 41, 49 \rangle$.
            \item $C_5^{2n}\rtimes {\rm {SL}}_2(3)$, with $n>1$ and $\sg_G=-7\cdot \langle 19, 49 \rangle$.
            \item $C_5^{4n}\rtimes (C_3\rtimes Q_8)$, with $\sg_G=7\cdot \langle 11,49\rangle$.
            \item $C_5^{4n}\rtimes (C_3\times Q_8)$, with $\sg_G=-7\cdot\langle 19, -11\rangle$.
            \item $C_{13}^{2n}\rtimes (C_3\times Q_{8})$, with $\sg_G=-7\cdot \langle 49,79 \rangle$.
            \item $C_5^{4n}\rtimes H_2$, with $\sg_G=-13\cdot \langle 31, 49 \rangle$.
            \item $C_5^{4n}\rtimes ({\rm {SL}}_2(3).C_2)$, with $\sg_G=-13\cdot \langle 31,41,49 \rangle$.
            \item $C_{11}^2\rtimes {\rm {SL}}_2(5)$, with $\sg_G=-7\cdot \langle 541, 529, 221, 331 \rangle$.
    \end{enumerate}
 \end{theorem}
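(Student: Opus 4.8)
\emph{Strategy.} The plan is to combine the three bodies of structural information already assembled --- the classification of admissible complements (Theorem~\ref{qrcompl}, Proposition~\ref{nonsolvablefrobcompl}), the restrictions on the kernel (Section~4), and the ``$\tau$-calculus'' of Lemma~\ref{easy} and Lemma~\ref{lemma5.4bis} --- into a finite case analysis organised complement by complement, and then to pin down the semi-rationalities $\sg_G$ by a computer-assisted check on the finitely many base cases that survive. Since $|H|$ is even, $|K|$ is odd and $K$ is abelian, so by Proposition~\ref{prop4.5} every Sylow subgroup of $K$ is elementary abelian of odd prime order, $K$ is a direct product of such subgroups, and by Proposition~\ref{prop4.6} the order of $K$ is one of the entries of Table~\ref{possibleK}.

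\emph{Reduction to a single prime.} The first step is to discard the genuinely mixed-prime entries of Table~\ref{possibleK} --- namely $3^a5^b$ for $H\in\{C_4,Q_8,Q_{16}\}$ and $5^a7^b$ for $H\in\{C_3\rtimes Q_8,\,C_3\times Q_8,\,H_2\}$ --- together with the prime $17$. Suppose $|K|=p^aq^b$ with $\{p,q\}=\{3,5\}$ or $\{5,7\}$, and pick $\lambda=\lambda_p\times\lambda_q\in\irr K$ with $\Q(\lambda)=\Q_{pq}$. By Proposition~\ref{normalinqr} (and Proposition~\ref{qrfrobthm}) we have $|\Q(\lambda^G):\Q|\le2$, so Lemma~\ref{lemmant} gives $|I^*_\lambda/I_\lambda|\ge\phi(pq)/2$; moreover, since $K\le I_\lambda$, the group $I^*_\lambda/I_\lambda$ is a section of $H$ embedding into $\mathcal{U}(\Z/pq\Z)\cong\mathcal{U}(\Z/p\Z)\times\mathcal{U}(\Z/q\Z)$. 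In the $\{5,7\}$ case $|I^*_\lambda/I_\lambda|\in\{12,24\}$, so it has an element $\bar g$ of order $3$; the image of $\bar g$ in the factor $\mathcal{U}(\Z/5\Z)\cong C_4$ is trivial, so a preimage $g\in I^*_\lambda$ (which lies outside $I_\lambda\supseteq K$) maps to a nontrivial element of $H$ fixing $\lambda_5\neq 1$ --- contradicting that $H$ acts fixed-point-freely on $\irr K$. In the $\{3,5\}$ case one checks that $H$ has no section isomorphic to $\mathcal{U}(\Z/15\Z)\cong C_2\times C_4$, so $|I^*_\lambda/I_\lambda|=4$; whether this group is cyclic or elementary abelian it contains a nontrivial element whose image in $\mathcal{U}(\Z/3\Z)$ or in $\mathcal{U}(\Z/5\Z)$ is trivial, and the same argument then yields a nontrivial element of $H$ fixing a nonprincipal character of $C_3^a$ or of $C_5^b$, again impossible. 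Finally, the prime $17$ appears in Table~\ref{possibleK} only alongside $H\cong{\rm SL}_2(5).C_2$, and one verifies directly that $C_p^a\rtimes({\rm SL}_2(5).C_2)$ is never quadratic rational for $p\in\{7,11,13,17\}$; in particular ${\rm SL}_2(5).C_2$ is not the complement of any quadratic rational Frobenius group, and $17\notin\pi(G)$.

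\emph{The value of $r$ and the classification.} Now $K$ is elementary abelian of order a power of $p\in\{3,5,11,13\}$ (the configuration of part~(5), with $p=5$, being the only one in which $K$ carries two inequivalent one-dimensional summands). By Proposition~\ref{prop5.5} the group $G$ is uniformly semi-rational, and by Remark~\ref{USR} together with Remark~\ref{iranian} it is $r$-semi-rational for an integer $r$ with $r^4\equiv 1\pmod{\exp(G)}$; since $G$ is excluded from Theorem~\ref{1} the order of $r$ modulo $\exp(G)$ is not $1$, and since it is excluded from Theorems~\ref{2} and \ref{3} it is not $2$, hence it is exactly $4$. For the list itself, it remains --- complement by complement --- to decide which Frobenius groups $K\rtimes H$ with $K$ elementary abelian of $p$-power order are quadratic rational without already occurring in Theorems~\ref{1}--\ref{3}, and to compute $\sg_G$. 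The passage from one copy of $K$ to $K^n$ is governed precisely by Lemma~\ref{lemma5.4bis} and Remark~\ref{Gn}: for $n\ge2$ the group $K^n\rtimes H$ is uniformly semi-rational exactly when $[\aut{\langle x\rangle}:Z_G(x)]\le2$ and $Z_G(x)\langle\tau_r\rangle=\aut{\langle x\rangle}$, and then $\sg_{G_n}$ is the distinguished subset of $\sg_G$ described in Remark~\ref{Gn}. This both forces the conditions $n>1$ in parts~(1)--(3) --- for $n=1$ the groups $C_5^2\rtimes Q_8$, $C_5^2\rtimes(C_3\rtimes C_4)$ and $C_5^2\rtimes{\rm SL}_2(3)$ are rational or inverse semi-rational, hence lie in Theorem~\ref{1} or Theorem~\ref{2} --- and eliminates the spurious families (such as $C_7^{2n}\rtimes{\rm SL}_2(3)$ for $n>1$) for which $Z_G(x)$ is too small in the absolutely irreducible module. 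Only finitely many base semidirect products $K\rtimes H$ then remain; each, together with its $\sg_G$, is checked directly with \texttt{GAP}, which (combined with Remark~\ref{Gn}) produces exactly the thirteen families and the stated generators of $\sg_G$, as well as the converse statement.

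\emph{Main obstacle.} The machinery that reduces the infinitely many groups $K^n\rtimes H$ to finitely many checks is already in hand, so the delicate part is the first reduction --- eliminating every mixed-prime kernel, and the prime $17$, uniformly across all the complements. The mixed-kernel step hinges on combining the order bound $|I^*_\lambda/I_\lambda|\ge\phi(pq)/2$, the subgroup lattice of $\mathcal{U}(\Z/pq\Z)$, the section structure of $H$, and the fixed-point-freeness of the action on $\irr K$, all at once; and ruling out the ${\rm SL}_2(5).C_2$ complements requires a genuine (though finite) computation on the admissible modules. The rest --- establishing quadratic rationality of the surviving base groups and reading off the various $\sg_G$ --- is routine but lengthy.
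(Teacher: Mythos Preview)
Your overall strategy matches the paper's: reduce to a single prime in $\pi(K)$, invoke Proposition~\ref{prop5.5} and Remark~\ref{USR} to get uniform $r$-semi-rationality with $r^4\equiv 1$, and then use Lemma~\ref{lemma5.4bis}/Remark~\ref{Gn} together with a finite \texttt{GAP} check. Your mixed-prime elimination is in fact slightly more uniform than the paper's (the paper first passes through Table~\ref{irreducibleK} to cut the list of complements down before running essentially the same $I^*_\lambda/I_\lambda$ argument on $\Q_{15}$ and $\Q_{35}$).

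There is, however, one genuine gap. Lemma~\ref{lemma5.4bis} and Remark~\ref{Gn} only control the passage from an irreducible $\F_pH$-module $K$ to the \emph{homogeneous} module $K^n$. They say nothing about a kernel that is a direct sum of \emph{non-isomorphic} irreducibles, and several rows of Table~\ref{irreducibleK} (those marked $(*)$: $C_4$ on $C_5$, $C_6$ on $C_7$ and $C_{13}$, $C_3\times Q_8$ on $C_7^2$ and $C_{13}^2$, ${\rm SL}_2(3).C_2$ on $C_7^2$, ${\rm SL}_2(5)$ on $C_{11}^2$) admit two inequivalent irreducible modules over the same prime. For each such pair $(V_1,V_2)$ you must separately check whether $(V_1\times V_2)\rtimes H$ has the $\frac{p-1}{2}$-eigenvalue property; the paper does this explicitly for $H\cong C_4$ (where it \emph{does} work, producing case~(5) with arbitrary exponents $a,b$) and for $H\cong C_6$ with $p=7$ (where it fails), and defers the remaining cases to \texttt{GAP}. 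Your parenthetical ``the configuration of part~(5)\ldots being the only one in which $K$ carries two inequivalent one-dimensional summands'' both understates the problem (the higher-dimensional $(*)$ cases must also be eliminated) and asserts the outcome without giving the argument. This is not hard to fix, but it is a missing step, and without it the reduction to ``finitely many base semidirect products'' is incomplete.

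A minor slip: your sentence ``$K$ is elementary abelian of order a power of $p\in\{3,5,11,13\}$'' is premature. At that point of the argument $p=7$ is still possible (indeed it occurs in Theorems~\ref{2} and~\ref{3}); that $p\neq 7$ for the groups of Theorem~\ref{4} is an \emph{output} of the final case analysis, not an input to the $r^4\equiv 1$ claim (which, via Remark~\ref{USR}, holds for $p=7$ as well).
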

 
 The following proposition, based on computation with \texttt{GAP}, will be our starting point for the proof.
 
 \begin{proposition}\label{tableirreducible} Let $G$ be a Frobenius group with kernel $K$ and complement $H$ of even order. Assume that $K$ is a minimal normal subgroup of $G$. Then $G$ is quadratic rational if and only if its structure is one of those displayed in Table~$\ref{irreducibleK}$.
 \end{proposition} 
 
\begin{proof} In Proposition~\ref{prop4.6} we derived some restrictions on the structure of a quadratic rational Frobenius group $G$ whose complement has even order (see Table~\ref{possibleK}). In view of that and with the help of \texttt{GAP}, it is possible to derive the exact list of all the Frobenius groups of this kind with the additional property that the kernel $K$ is a minimal normal subgroup of $G$ (i.e., it is irreducible as an $H$-module). More precisely, we use the function ``\texttt{pIrreducibleSemidirectFrobeniusSemirational}" that can be found in \cite{codes}: this takes a group $H$ and a prime $p$ as an input, and it returns the list of all the Frobenius groups $G=K\rtimes H$ such that $K$ is an irreducible $\F_p H$-module and $G$ is quadratic rational, checking whether the relevant Frobenius actions have the $\frac{p-1}{2}$-eigenvalue property. 

Note that, in view of Proposition~\ref{prop5.5}, Table~\ref{irreducibleK} also provides the exact list of all the \emph{semi-rational} Frobenius groups whose complement has even order and whose kernel is minimal normal.
 
 In Table~\ref{irreducibleK}, when an isomorphism type of $K$ is marked with $(*)$, we mean that $K$ has two non-isomorphic structures as an $H$-module; however, in all these cases, the isomorphism type of $G=K\rtimes H$ is uniquely determined. The last column of Table~\ref{irreducibleK} also provides the \texttt{GAP} identifiers of the relevant groups when they exist (otherwise we write n.a., for ``not available").
\end{proof}

 \begin{table}[h]
    
   \begin{tabular}{|c|c|c|c|}
    \hline
    $|H|$ & $H$ & $K$ & ${\texttt{IdGroup(G)}}$\\
    \hline
    $2$& $C_2$ & $C_3,\quad C_5$ & ${\texttt{[6,1],\quad[10,1]}}$\\  
    $4$& $C_4$ & $C_3^2,\quad C_5\; (*)$ & ${\texttt{[36,9],\quad[20,3]}}$ \\ 
    $6$ & $C_6$ & $C_5^2,\quad C_7\;(*),\quad C_{13}\; (*)$ & ${\texttt{[150,6],\quad[42,1],\quad [78,1]}}$\\
    $8$ & $Q_8$ & $C_3^2,\quad C_5^2$ & ${\texttt{[72,41],\quad[200,44]}}$\\
     $12$ & $C_3\rtimes C_4$ & $C_5^2$ & ${\texttt{[300,23]}}$\\
    $16$ & $Q_{16}$ & $C_3^4,\quad C_5^4$ & ${\texttt{[1296,3523],\quad n.a.}}$\\
    $20$ & $H_1$ & $C_3^4$ & ${\texttt{[1620,419]}}$\\
    $24$ & ${\rm {SL}}_2(3)$ & $C_5^2,\quad C_7^2$ & ${\texttt{[600,150],\quad[1176,215]}}$\\
    $24$ &$ C_3\rtimes Q_8$ & $C_5^4$ & ${\texttt {n.a.}}$\\
    $24$ &$C_3\times Q_8$ & $C_5^4,\quad C_7^2\;(*),\quad C_{13}^2\;(*)$ & ${\texttt{n.a.,\quad [1176,218],\quad n.a.}}$ \\
    $48$ & $H_2$ & $C_5^4$ & ${\texttt {n.a.}}$\\
    $48$ & ${\rm {SL}}_2(3).C_2$ & $C_5^4,\quad C_7^2\;(*)$ & ${\texttt {n.a.,\quad n.a.}}$\\
    $120$ & ${\rm {SL}}_2(5)$ & $C_{11}^2\; (*)$ & ${\texttt {n.a.}}$\\
    \hline
    \end{tabular}
    \vspace{0.3cm}
    
    \caption{Quadratic rational Frobenius groups $G=K\rtimes H$ with $|H|$ even and $K$ minimal normal.}
        \label{irreducibleK}
    \end{table}

\begin{proof}[Proof of Theorems~$\ref{1}-\ref{4}$] By our assumptions, $G$ is a quadratic rational Frobenius group whose complement $H$ has even order: in this situation, Proposition~\ref{prop4.5} yields that $K$ is an abelian group of (odd) square-free exponent and, for every prime $p\in\pi(K)$, the Sylow $p$-subgroup of $K$ is a normal subgroup of $G$ that can be viewed as an $\F_pH$-module. Now, by Maschke's Theorem, every Sylow subgroup of $K$ is in fact a direct product of minimal normal subgroups of $G$ that can be viewed as irreducible $\F_pH$-modules. Note that, if $V$ is one of these minimal normal subgroups, then $VH$ is isomorphic to a factor group of $G$, hence it is a quadratic rational Frobenius group as well and it must show up in Table~\ref{irreducibleK}. 

Assume first that $K$ is not a group of prime-power order. Taking into account Table~\ref{possibleK} and the paragraph above, the possible isomorphism types of $H$ in this situation are $C_4$, $Q_8$, $Q_{16}$ (with $\pi(K)=\{3,5\}$ for all three cases) or $C_3\times Q_8$ (with \(\pi(K)=\{5,7\}\)). 

As regards the case $\pi(K)=\{3,5\}$, $K$ must have two subgroups $V_1\cong C_3^n$ and $V_2\cong C_5^m$, both minimal normal in $G$, such that $(n,m)$ is $(2,1)$, $(2,2)$ or $(4,4)$ depending on whether $H\cong C_4$, $H\cong Q_8$, or $H\cong Q_{16}$ respectively; moreover, $V_1H$ and $V_2H$ are quadratic rational Frobenius groups as in Table~\ref{irreducibleK}. Set $W=V_1V_2$ and consider the Frobenius group $WH$, which is also quadratic rational (because it is isomorphic to a factor group of $G$). Take $\lambda\in\irr W$ with $\Q(\lambda)=\Q_{15}$ (and $|\lambda|=15$): the quadratic rationality of $WH$ implies that $I^*_\lambda/I_\lambda$ is isomorphic to a subgroup of index at most $2$ of $\Gal{(\Q_{15}/\Q)}$. On the other hand, $I^*_\lambda/I_\lambda$ is also isomorphic to a subgroup of $H$, hence it must be cyclic of order $4$; but now $I^*_\lambda/I_\lambda$, which stabilizes the subgroup $\langle\lambda\rangle$ of $\irr W$, cannot act fixed-point freely on it because $15\not\equiv 1$ (mod $4$). This is a contradiction, because the action of $H$ on $\irr W$ is fixed-point free, and we conclude that $G$ cannot have a kernel $K$ with $\pi(K)=\{3,5\}$.

An entirely similar argument shows that also the assumption $\pi(K)=\{5,7\}$ (with $H\cong C_3\times Q_8$) leads to a contradiction, thus our conclusion so far is that $K$ is an elementary abelian $p$-group for a suitable prime $p$. In particular, by Proposition~\ref{prop5.5}, $G$ is uniformly semi-rational. We also note that, at this stage, the claim that $G$ is $r$-semi-rational for a suitable $r$ whose order modulo the exponent of $G$ divides $4$ follows by Remark~\ref{USR} (and by the fact that the prime $17$ does not show up as a divisor of $|K|$ in Table~\ref{irreducibleK}).

Next, we consider the situation when the $p$-group $K$ decomposes into a direct product of minimal normal subgroups of $G$ that are not isomorphic as $\F_p H$-modules (i.e., when the $\F_pH$-module $K$ is not homogeneous); this can happen when the isomorphism type of $H$ corresponds to a row of Table~2 containing the symbol $(*)$.

Let us start with the case $H=\langle h\rangle\cong C_4$, so, we consider a Frobenius group  $G$ of the kind $(C_5^a\times C_5^b)\rtimes C_4$ where $x^h=x^2$ for every $x\in C_5^a$, and $y^h=y^3$ for every $y\in C_5^b$. Take a non-trivial element $v=(x,y)\in C_5^a\times C_5^b$, and observe that $v^{h^2}=(x^4,y^4)=v^4$; it follows that $\tau_4\in\aut{\langle v\rangle}$ lies in $\bg G v$, and it has order $2$. Hence we get $[\aut{\langle v\rangle}:\bg G v]=2$ and $\aut{\langle v\rangle}=\bg G v\langle \tau_3\rangle$, and it is easily seen (also by Lemma~\ref{easy}) that we are in case (5) of Theorem~\ref{4}. 

Let now $H=\langle h\rangle\cong C_6$, and consider a Frobenius group $G$ of the kind $(C_{7}^a\times C_{7}^b)\rtimes C_6$ where $x^h=x^3$ for every $x\in C_{7}^a$, and $y^h=y^5$ for every $y\in C_{7}^b$. Taking $v=(x,y)\in C_{7}^a\times C_{7}^b$ with non-trivial $x$ and $y$, it is easy to see that $\bg G v=\langle\tau_{6}\rangle$ has index $3$ in $\aut{\langle v\rangle}$. Therefore, $G$ is not semi-rational and this case can be discarded.

What we did in the situation of the paragraph above was to check that the $\F_p H$-module $K$ is not semi-rational in $G$ (i.e., it does not have the $\frac{p-1}{2}$-eigenvalue property), producing an element of $K$ that is not semi-rational. With the same technique, also with the help of the \texttt{GAP} function ``\texttt{checkingRemainingCases}" in \cite{codes}, it is possible to discard all the remaining cases in which $K$ is non-homogeneous.

It remains to consider each group $G$ arising from Table~\ref{irreducibleK} and (taking into account Proposition~\ref{prop5.5}) determine the integers $r$ for which $G$ is $r$-semi-rational: this can be done via the function ``\texttt{USRInt}" of~\cite{codes}. Finally we need to check whether or not, for $n>1$ and for these integers $r$, the group $G_n=K^n\rtimes H$ is $r$-semi-rational: to this end we use the characterization obtained in Lemma~\ref{lemma5.4bis} and, with the help of the function ``\texttt{extensionsSG}" in \cite{codes}, we test condition (2) of that lemma on the group $G$.  The result is what we stated in Theorems~\ref{1}-\ref{4}.
\end{proof}
    
Note that ${\rm{SL}}_2(5).C_2$ actually does not occur as the complement of a quadratic rational Frobenius group; on the other hand, all the other groups listed in Theorem~\ref{qrcompl} and  Proposition~\ref{nonsolvablefrobcompl} do occur (as we see from the results above, or also from Proposition~\ref{tableirreducible}, with the observation that $C_7\rtimes C_3$ is inverse semi-rational). Therefore we have also completed the proof of Theorem~A.  

\begin{rem}\label{semiratcompleted}
Assume that $G$ is a \emph{semi-rational} Frobenius group whose complement $H$ has even order. Then the kernel $K$ of $G$ (is abelian and) has square-free exponent by Lemma~2.5 of \cite{semiratfrobenius}. If $V$ is a minimal normal subgroup of $G$ contained in $K$ then, as in the first papagraph of the proof above, we see that $VH$ is a Frobenius group that must show up in Table~\ref{irreducibleK} (recall also Proposition~\ref{tableirreducible}). Now, a slight variation of the argument in the second, third and fourth paragraphs of the same proof yields that $K$ must be a group of prime-power order, hence $G$ turns out to be quadratic rational by Proposition~\ref{prop5.5}.

Since we will see next (in Proposition~\ref{ultima}) that a semi-rational Frobenius group whose complement has odd order is inverse semi-rational, hence also quadratic rational, we conclude that the class of semi-rational Frobenius groups \emph{coincides} with that of quadratic rational Frobenius groups. Thus, the Main Theorem of this paper can be also regarded as a completion of the analysis carried out in \cite{semiratfrobenius}.
\end{rem}
   
\smallskip
To finish the proof of the Main Theorem, it remains to treat the case of quadratic rational Frobenius groups whose complement is isomorphic to $C_3$. In this case the Frobenius kernel can be in principle of even order, so we need to consider quadratic rational $2$-groups.

\begin{proposition}\label{twogroups}
        Let $G$ be a $2$-group of nilpotency class at most $2$, and assume that $G$ is either quadratic rational or semi-rational. Then the following properties hold.
        \begin{enumerate}
            \item The abelian groups $G/\zent G$ and $\zent G$ have an exponent that divides $4$.
            \item $G$ is either rational or inverse semi-rational with semi-rationality $\sg_G=-1\cdot\langle 5 \rangle$.
        \end{enumerate}
    \end{proposition}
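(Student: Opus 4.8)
The plan is to settle part~(1) quickly and then reduce part~(2) to a statement about the fields of values of elements of $G$.

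For part~(1), the subgroup $\zent G$ is abelian, and I would first check that it is quadratic rational as an abstract group. If $G$ is quadratic rational this follows from Proposition~\ref{normalinqr}(2) together with the observation that every $\theta\in\irr{\zent G}$ is $G$-invariant, so that $I_\theta=I^*_\theta=G$ and hence $\Q(\theta)=\Q(\theta^G)$ by Lemma~\ref{lemmant}. If instead $G$ is semi-rational, each $x\in\zent G$ is semi-rational in $G$ and, its $G$-conjugacy class being $\{x\}$, the $\phi(|x|)$ generators of $\langle x\rangle$ lie in $\phi(|x|)$ distinct classes, forcing $\phi(|x|)\le 2$. In both situations $\zent G$ is an abelian $2$-group of exponent dividing $4$: by Proposition~\ref{qrfacts}(2) in the first case, by the same counting argument in the second. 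Next, $G/\zent G$ is abelian because $G$ has class at most $2$; it inherits quadratic rationality by Proposition~\ref{qrfacts}(1), and it inherits semi-rationality since quotients of semi-rational groups are semi-rational (for $2$-groups this is immediate, as a residue coprime to $|\bar x|$ is already coprime to $|x|$). Arguing as before, $\exp(G/\zent G)$ divides $4$ as well.

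For part~(2), part~(1) yields $x^4\in\zent G$ for every $x\in G$, hence $x^{16}=1$, so $\exp(G)$ divides $16$. The heart of the argument is the claim that $\Q(x)\subseteq\Q_4$ for every $x\in G$ --- equivalently, that $x^j$ is conjugate to $x$ in $G$ whenever $j\equiv 1\pmod 4$. In the quadratic rational case I would derive this from the structure of irreducible characters of a group of class at most $2$: for $\chi\in\irr G$ with kernel $N$, the group $\zent{G/N}$ is cyclic (it carries the faithful character $\chi$), the character $\chi$ vanishes off the preimage $Z$ of $\zent{G/N}$ in $G$, and $\chi$ restricted to $Z$ is a multiple of a faithful linear character $\mu$ of $Z/N$. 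This rests on the class-$2$ identity $x^G=x\cdot[x,G]$ (with $[x,G]$ a subgroup of $G'\le\zent G$) via the identity $|x^G|\,\chi(x)=\chi(x)\sum_{z\in[x,G]}\mu(z)$, whose right-hand side vanishes when $x\notin Z$ since $\mu$ is then non-trivial on $[x,G]$. Consequently $\Q(\chi)=\Q_{|Z/N|}$ is a $2$-power cyclotomic field, and quadratic rationality forces $\phi(|Z/N|)\le 2$, i.e.\ $|Z/N|$ divides $4$; hence $\Q(\chi)\subseteq\Q_4$ for every $\chi$, and therefore $\Q(x)\subseteq\Q_4$ for every $x\in G$.

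In the semi-rational case I would argue the same claim directly: if $|x|\ge 8$ and $h^{-1}xh=x^j$, then $[x,h]=x^{j-1}$ lies in $G'\le\zent G$ and so has order dividing $4$, which forces $j\equiv 1\pmod 4$; thus $\bg G x$ is contained in the index-$2$ subgroup of $\mathcal{U}(\Z/|x|\Z)$ formed by the residues $\equiv 1\pmod 4$, and semi-rationality of $x$ forces equality, while for $|x|\le 4$ the claim is trivial. Granting the claim, write $\langle 5\rangle$ for the subgroup of $\mathcal{U}(\Z/\exp(G)\Z)$ of residues $\equiv 1\pmod 4$ (for $\exp(G)\in\{4,8,16\}$ this really is $\langle 5\rangle$, of index $2$); then $\langle 5\rangle\subseteq\rg_G$, so $\rg_G$ is either the full group $\mathcal{U}(\Z/\exp(G)\Z)$, in which case $G$ is rational, or equal to $\langle 5\rangle$. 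A short case analysis on $|x|\in\{2,4,8,16\}$ --- for $|x|\ge 8$ one has $\bg G x=\langle 5\rangle$, so the generators of $\langle x\rangle$ split into the two cosets $\langle 5\rangle$ and $-\langle 5\rangle$ with $x^{-1}$ lying in the second, while for $|x|\le 4$ the generators of $\langle x\rangle$ are just $x$ and $x^{-1}$ --- shows that every element of $G$ is $(-1)$-semi-rational, so $G$ is inverse semi-rational. When $G$ is not rational, Remark~\ref{rationality} then gives $\sg_G=-1\cdot\rg_G=-1\cdot\langle 5\rangle$, as desired.

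The step I expect to be the genuine obstacle is the quadratic rational half of the claim above. For an element $x$ of order $8$ the estimate of Lemma~\ref{stimeqr} gives no lower bound on $|\bg G x|$ whatsoever, so there seems to be no way around invoking the vanishing property of irreducible characters of class-$2$ groups recalled above in order to control the conductor of $\Q(\chi)$. Once that input is available, everything else is either immediate from part~(1) or an elementary computation inside $\mathcal{U}(\Z/16\Z)$.
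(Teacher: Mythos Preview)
Your argument for part~(1) is essentially the paper's. For the quadratic rational half of part~(2), your route via the vanishing of $\chi$ outside $Z(\chi)$ is correct but differs from the paper's: the paper never computes $\Q(\chi)$ directly. Instead, for any $x\in G\setminus\zent G$ it sets $W=[x,G]$ (a central subgroup since $G$ has class $\le 2$), notes that $xW\in\zent{G/W}$, and applies part~(1) to the quotient $G/W$ to conclude $x^4\in W$, i.e.\ $x^5\in x^G$. This single argument works uniformly under either the quadratic rational or the semi-rational hypothesis, and the proof then finishes (as yours does) via $\langle 5\rangle\subseteq\rg_G$ and \cite[Proposition~2.2]{bac}.

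The genuine gap lies in your semi-rational argument, precisely where you did \emph{not} expect trouble. For $|x|=8$ and $j$ odd, $x^{j-1}$ is an even power of $x$ and hence automatically has order dividing $4$; your constraint ``$[x,h]=x^{j-1}$ has order dividing $4$'' is therefore vacuous and does not force $j\equiv 1\pmod 4$. Nothing in your argument rules out $\bg G x=\{1,3\}$ or $\{1,7\}$, so you cannot yet conclude $5\in\rg_G$. The fix is short once you use that $[x,G]$ is a \emph{subgroup}: semi-rationality gives $|\bg G x|\ge 2$, so some $x^{j-1}$ with $j\in\{3,5,7\}$ lies in $[x,G]$, and in each case $x^4\in\langle x^{j-1}\rangle\subseteq[x,G]$, whence $x^5\in x^G$. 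This is really a special case of the paper's quotient trick, which you might prefer to adopt as a single replacement for both halves of your key claim.
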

    \begin{proof}
We start by proving claim (1). Since $G/\zent G$ is an abelian $2$-group that is quadratic rational or semi-rational, it has an exponent that divides $4$ by Proposition~\ref{qrfacts} and \cite[Lemma~5]{CD}, respectively. As regards $\zent G$, let us consider the case when $G$ is semi-rational: for $x\in\zent G$ we have on one hand $|\bg G x|=1$, and on the other hand $|\aut{\langle x\rangle}:\bg G x|\leq 2$ by Lemma~\ref{easy}. We conclude that $\phi(|x|)\leq 2$, hence $x^4=1$. Assume now that $G$ is quadratic rational, and let $\lambda\in\irr{\zent G}$ be such that $\Q(\lambda)=\Q_m$ where $m$ denotes the exponent of $\zent G$. Observe that $G= I_\lambda$, so $I^*_\lambda/I_\lambda=1$; thus, By Lemma~\ref{lemmant}, we have that $\Q(\lambda^G)=\Q(\lambda)$. We conclude that $|\Q_m:\Q|=|\Q(\lambda):\Q|=|\Q(\lambda^G):\Q|\leq 2$ (recall Proposition~\ref{normalinqr}) and therefore $m$ divides $4$, as wanted.  

Next, we prove (2). If $G$ is abelian then we are done by (1), so we can assume that $G$ is non-abelian and we consider an element $x\in G\setminus \zent G$. Since $G'\subseteq\zent G$, we have $[x,g]\in \zent G$ for every $g\in G$, and we get $[x,g][x,h]=[x,gh]$ for every $g,h\in G$. In particular, the set $W=\{[x,g]\mid g\in G\}$ is a non-trivial central subgroup of $G$ and $G/W$ is a quadratic rational or a semi-rational group as well (thus property (1) holds for $G/W$). Observe that, for every $g\in G$, we have $(xW)^g=x^gW=x[x,g]W=xW$, i.e., $xW$ lies in $\zent {G/W}$. But, as the exponent of $\zent{G/W}$ divides $4$, we get $x^4\in W$; in other words, $x^5$ is conjugate to $x$ in $G$.  The same conclusion holds also for every element of $\zent G$, considering that the exponent of $\zent G$ is a divisor of $4$; therefore, denoting by $n$ the exponent of $G$ (which divides $16$), we conclude that $\langle 5 \rangle\subseteq \rg_G\cong \Gal(\Q_{n}/\Q(G))$. But, as the index of $\langle 5\rangle$ in $\mathcal{U}(\Z/n\Z)$ is at most $2$, this yields $|\Q(G): \Q|\leq 2$, hence $\Q(\chi)\subseteq \Q(i)$ for every $\chi\in\irr G$. The desired conclusion now follows from~\cite[Proposition 2.2]{bac}. \end{proof}
    
The proposition above shows that, for a $2$-group of nilpotency class at most $2$, the properties of being quadratic rational, semi-rational or inverse semi-rational are all equivalent. However, it is possible to find examples of $2$-groups $G$ having nilpotency class $3$ and such that:
\begin{enumerate}
\item $G$ is quadratic rational and semi-rational, not inverse semi-rational (\texttt{SmallGroup(128,417)}).
\item $G$ is quadratic-rational, not semi-rational (\texttt{SmallGroup(32,9)}).
\item $G$ is semi-rational, not quadratic rational (\texttt{SmallGroup(32,42)}).
\item $G$ is neither quadratic rational nor semi-rational (\texttt{SmallGroup(64,15)}).
\end{enumerate}

We are ready to conclude the proof of the Main Theorem. In view of Theorem~1.3 of \cite{bac}, it only remains to prove the following result.

\begin{proposition}\label{ultima}
Let $G$ be a Frobenius group whose complement has odd order. If $G$ is either quadratic rational or semi-rational, then $G$ is inverse semi-rational.
\end{proposition}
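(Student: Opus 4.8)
The plan is to reduce everything to a $2$-group and apply Proposition~\ref{twogroups}. First I would pin down the complement. Since the Frobenius kernel $K$ is nilpotent and $H$ has odd order (hence is solvable), $G$ is solvable; and $H\cong G/K$ inherits quadratic rationality, resp.\ semi-rationality, from $G$ (for quadratic rationality this is Proposition~\ref{qrfacts}(1); for semi-rationality one uses that the field of values of $\bar g$ computed in $G/K$ is contained in that of $g$ computed in $G$, via inflation of characters). Being of odd order, $H$ is then inverse semi-rational by Proposition~\ref{oddequiv}, hence a cut group, hence quadratic rational; so $H$ is a quadratic rational group of odd order, and by Remark~\ref{semiratorder} it is a $\{3,7\}$-group, not a $7$-group, with no element of order $21$. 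As $H$ is also a Frobenius complement, Lemma~\ref{frobeniuscomplementsylows} rules out a subgroup of order $21$, which together with $3\in\pi(H)$ excludes $7\in\pi(H)$; thus $H$ is a $3$-group with cyclic Sylow $3$-subgroup, hence cyclic, and quadratic rationality (Proposition~\ref{qrfacts}(2)) forces $H\cong C_3$. If $|K|$ is odd then $|G|$ is odd and Proposition~\ref{oddequiv} finishes the argument at once, so I assume henceforth that $|K|$ is even.

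Next I would show that $K$ is a $2$-group. Decompose $K=\oh 2 K\times\oh{2'}K$ with $\oh 2 K\neq 1$, and suppose $L:=\oh{2'}K\neq 1$. Since $L$ is characteristic in $K$, $G/\oh 2 K\cong L\rtimes C_3$ is again a quadratic rational, resp.\ semi-rational, Frobenius group of odd order; by the same analysis as above (and using $3\nmid|K|$) it follows that $L$ is a non-trivial $7$-group. Now take $a\in\oh 2 K$ of order $2$ and $b\in L$ of order $7$, and put $x=ab$, of order $14$. By Lemma~\ref{semiratpel}, $x$ is semi-rational in $G$, so $|\Q(x):\Q|\leq 2$. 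On the other hand I claim $\norm G{\langle x\rangle}\subseteq K$: otherwise $\norm G{\langle x\rangle}$ would contain an element of order $3$ (a generator of a Sylow $3$-subgroup of $G$, i.e.\ a non-trivial element of a Frobenius complement), which normalises the characteristic subgroup $\langle a\rangle$ of $\langle x\rangle$ and acts on the cyclic $2$-group $\langle a\rangle$ through the $2$-group $\aut{\langle a\rangle}$, hence centralises $a\neq 1$ — impossible in a Frobenius group. Consequently $\bg G x=\bg K x=\bg{\oh 2 K}a\times\bg L b$; here $\bg{\oh 2 K}a=1$ since $|a|=2$, and $\bg L b$ is a $7$-group embedding in $\aut{\langle b\rangle}\cong C_6$, hence trivial. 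Thus $\bg G x=1$ and $|\Q(x):\Q|=\varphi(14)=6$, a contradiction. Therefore $L=1$ and $K$ is a $2$-group.

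It then remains to verify that $K$, viewed in isolation, satisfies the hypotheses of Proposition~\ref{twogroups}, i.e.\ that it is a $2$-group of nilpotency class at most $2$ which is quadratic rational or semi-rational. Class at most $2$ holds because the generator of $H\cong C_3$ acts on $K$ as a fixed-point-free automorphism of order $3$, and a finite group with such an automorphism is nilpotent of class at most $2$. For the rationality hypothesis, I would first note that $\bg G x=\bg K x$ for every $x\in K$: one has $\cent G x=\cent K x$ (Frobenius), the index $[\norm G{\langle x\rangle}:\norm K{\langle x\rangle}]$ divides $[G:K]=3$, and $\bg G x$ embeds into the $2$-group $\aut{\langle x\rangle}$, so that index must be $1$. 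Hence $\Q(x)$ is the same computed in $K$ or in $G$, so if $G$ is semi-rational so is $K$; and if $G$ is quadratic rational, then for $\theta\in\irr K$ the field $\Q(\theta)$ lies in a $2$-power cyclotomic field, $|\Q(\theta):\Q(\theta^G)|=|I^*_\theta/I_\theta|$ divides $[G:K]=3$ (as $K\subseteq I_\theta$, inner automorphisms fixing characters), and $|\Q(\theta^G):\Q|\leq 2$ by Proposition~\ref{normalinqr}; since $|\Q(\theta):\Q|$ is a power of $2$ and is at most $6$, it is at most $2$, so $K$ is quadratic rational. In either case Proposition~\ref{twogroups} applies and $K$ is rational or inverse semi-rational.

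Finally I would conclude. Every element of $G$ lies in $K$ or in a conjugate of $H\cong C_3$; the latter have order dividing $3$ and are trivially $(-1)$-semi-rational. For $x\in K$, since $K$ is rational or inverse semi-rational we have $\bg K x\langle\tau_{-1}\rangle=\aut{\langle x\rangle}$, whence $\bg G x\langle\tau_{-1}\rangle=\aut{\langle x\rangle}$ because $\bg G x=\bg K x$, and Lemma~\ref{easy} yields that $x$ is $(-1)$-semi-rational in $G$. Hence $G$ is inverse semi-rational, which together with Theorem~1.3 of \cite{bac} completes the proof of the Main Theorem. I expect the crux to be the middle step, excluding a kernel with both a non-trivial $2$-part and a non-trivial odd part: the useful devices there are the normaliser identity $\norm G{\langle x\rangle}\subseteq K$ — which rests on the complement being a $3$-group and $\langle a\rangle$ a cyclic $2$-group — and the choice of an element of order $14$, whose semi-rationality (granted by Lemma~\ref{semiratpel}) is incompatible with the large value $\varphi(14)=6$.
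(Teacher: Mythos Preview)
Your proof is correct and follows essentially the same strategy as the paper's: pin down $H\cong C_3$, rule out $\pi(K)=\{2,7\}$ via an element of order $14$, and then feed the Sylow $2$-subgroup of $K$ into Proposition~\ref{twogroups} (using Higman's theorem for the class bound and the index-$3$ trick to transfer quadratic rationality/semi-rationality from $G$ to $K$).

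The differences are cosmetic. You re-derive $H\cong C_3$ from first principles (via Proposition~\ref{oddequiv}, Remark~\ref{semiratorder}, Lemma~\ref{frobeniuscomplementsylows}), whereas the paper simply cites Theorem~\ref{qrcompl} and \cite[Theorem~1.2]{semiratfrobenius}; your route has the advantage of not depending on the external semi-rational classification. In the order-$14$ step, the paper takes $x\in\zent K$ and argues that semi-rationality forces $\langle x\rangle\trianglelefteq G$, whence $H$ centralises $x^7$; you take arbitrary $a,b$ and argue instead that any $3$-element of $\norm G{\langle x\rangle}$ would already centralise $a=x^7$, so $\norm G{\langle x\rangle}\subseteq K$ and $\bg G x=1$. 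These are the two sides of the same contradiction. Finally, you dispose of the case $|K|$ odd upfront via Proposition~\ref{oddequiv}, while the paper carries the odd part $U$ along and uses the same proposition on $UH$; the net effect is identical.
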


\begin{proof} Observe first that, by Theorem~\ref{qrcompl} and \cite[Theorem~1.2]{semiratfrobenius}, we have $H\cong C_3$. Thus, denoting by $S$ the Sylow $2$-subgroup of $K$, we have that $S$ has a fixed-point free automorphism of order $3$, and therefore the nilpotency class of $S$ is at most $2$ by \cite[Theorem 3]{higman}. We claim that $S$ is either quadratic rational or semi-rational, and this will imply that $S$ is inverse semi-rational by an application of Proposition~\ref{twogroups}.

In fact, set $U=\oh{2'} K$ and consider the subgroup $SH\cong G/U$ of $G$. If $G$ is quadratic-rational, then $SH$ is a quadratic rational Frobenius group as well; therefore, for $\lambda\in\irr S$, we have $|\Q(\lambda^{SH}):\Q|\leq 2$. Now, $I^*_\lambda/I_\lambda$ is isomorphic to a subgroup of $H$, but also to the $2$-group $\Gal{(\Q(\lambda)/\Q(\lambda^{SH}))}$; as a consequence, $I^*_\lambda/I_\lambda$ is trivial and so $\Q(\lambda)=\Q(\lambda^{SH})$ is an extension of $\Q$ of degree at most $2$. In other words, $S$ is quadratic rational. On the other hand assume that $G$ is semi-rational, so that $SH$ is a semi-rational Frobenius group; for $x\in S$ we have that $\norm {SH}{\langle x\rangle}$ lies in $S$ (in fact, any $3$-element in  $\norm {SH}{\langle x\rangle}$ would actually centralize $x$, but $3$ does not divide $|\cent G x|$ because the action of $H$ on $S$ is fixed-point free), hence $\bg {SH} x=\bg S x$. It follows that, $x$ being semi-rational in $SH$, $x$ is semi-rational in $S$ as well, and the claim of the paragraph above is proved.

Our conclusion so far is that $S$ is inverse semi-rational, hence, in particular, every $2$-element of $G$ is inverse semi-rational in $G$. Next, consider the quadratic rational or semi-rational group $G/S\cong UH$. Since this group has odd order, it is inverse semi-rational by Proposition~\ref{oddequiv}; in particular every element of $U$ is inverse semi-rational in $G$. Moreover, by the discussion in the second paragraph of Remark~\ref{semiratorder}, $UH$ is a $\{3,7\}$-group (i.e., $U$ is a $7$-group). Since every $3$-element of $G$ is inverse semi-rational in $G$ as well as the $2$-elements and the $7$-elements, the desired conclusion follows if we prove that $G$ is either a $\{2,3\}$-group or a $\{3,7\}$-group. So, arguing by contradiction, assume $\pi(K)=\{2,7\}$ and take an element $x\in\zent K$ of order $14$. Then $x$ is semi-rational in $G$ (recall Lemma~\ref{semiratpel}), and this forces $\bg G x$ to have an element of order $3$. In other words, $\bg G x=G/K$ and so $\langle x\rangle$ is normal in $G$; this is clearly a contradiction, as it would imply that $H$ centralizes the non-trivial element $x^7\in K$. The proof is complete.
\end{proof}

\smallskip
{\bf Acknowledgment.} The authors would like to thank \'Angel del R\'io for some fruitful discussions on the subject of this work (in particular, for suggesting the concept of uniform semi-rationality), and for the kind hospitality offered to the second author at the University of Murcia. They also thank the referee for her/his careful reading and for some valuable comments that improved the exposition of the material in this paper.

\end{document}